\newcommand\Z{\mathbb{Z}}
\newcommand\N{\mathbb{N}}
\newcommand\R{\mathbb{R}}
\newcommand\sym{{\mathfrak S}}
\newcommand\pair[1]{{\langle\!\langle}#1{\rangle\!\rangle}}
\newcommand\zot{{\mathbf{012}}}
\newcommand\coefficient{coefficient}
\newcommand\coordinate{coordinate}
\newcommand\arity{degree}
\newtheorem{lemma}{Lemma}[section]
\newtheorem{proposition}[lemma]{Proposition}
\newtheorem{corollary}[lemma]{Corollary}
\newtheorem{maintheorem}{Theorem}
\theoremstyle{definition}
\newtheorem{remark}[lemma]{Remark}
\newtheorem{definition}[lemma]{Definition}
\begin{document}
\title{Groups of given intermediate word growth}
\author{Laurent Bartholdi}
\address{L.B.: Mathematisches Institut, Georg-August Universit\"at, G\"ottingen, Germany}

\author{Anna Erschler}
\address{A.E.: C.N.R.S., D\'epartement de Math\'ematiques, Universit\'e Paris Sud, Orsay, France}

\date{12 April 2012}

\thanks{The work is supported by the ERC starting grant GA 257110
  ``RaWG'', the ANR ``DiscGroup'' and the Courant Research Centre
  ``Higher Order Structures'' of the University of G\"ottingen}

\begin{abstract}
  We show that there exists a finitely generated group of growth $\sim
  f$ for all functions $f\colon\R_+\to\R_+$ satisfying $f(2R)\le
  f(R)^2\le f(\eta_+R)$ for all $R$ large enough and
  $\eta_+\approx2.4675$ the positive root of $X^3-X^2-2X-4$. Set
  $\alpha_-=\log2/\log\eta_+\approx0.7674$; then all functions that
  grow uniformly faster than $\exp(R^{\alpha_-})$ are realizable as
  the growth of a group.

  We also give a family of sum-contracting branched groups of growth
  $\sim\exp(R^\alpha)$ for a dense set of $\alpha\in[\alpha_-,1].$
\end{abstract}
\maketitle

\section{Introduction}
In~\cite{grigorchuk:growth}, Grigorchuk discovered the first example
of group with growth function strictly between polynomial and
exponential. Recall that the growth function of a finitely generated
group $G$, generated by $S$ as a semigroup, is
\[v(R)=\#\{g\in G\mid g=s_1\cdots s_\ell,\,\ell\le R,s_i\in S\};\] it
measures the volume of a ball of radius $R$ in the Cayley graph of
$G$. This function depends on the choice of generating set $S$, but
only mildly: say $v\sim v'$ if there is a constant $C>0$ such that
$v(R)\le v'(CR)$ and $v'(R)\le v(CR)$ for all $R$ large enough. Then
the $\sim$-equivalence class of $v$ is independent of the choice of
$S$.

Grigorchuk's result has been considerably extended, mainly by
Grigorchuk himself, who constructed uncountably many groups of
intermediate growth~\cite{grigorchuk:gdegree}.
See~\cites{bartholdi:lowerbd,bartholdi:upperbd,bartholdi-s:wpg,erschler:degrees,leonov:lowerbd,muchnik-p:growth},
the books~\cites{harpe:ggt,mann:howgroupsgrow}, or~\S\ref{ss:prior}
for a brief overview.

Nevertheless, these results only give estimates on the growth of these
groups. The first actual computation (up to $\sim$) of the growth
function of a group of intermediate growth appears
in~\cite{bartholdi-erschler:permutational}. In the present paper, we
prove that a large class of intermediate growth functions are the
growth functions of finitely generated groups. Our main result is:
\begin{maintheorem}\label{thm:main}
  Let $\eta_+\cong2.4675$ be the positive root of $X^3-X^2-2X-4$.  Let
  $f\colon\R_+\to\R_+$ be a function satisfying
  \begin{equation}\label{eq:gdoubling}
    f(2R)\le f(R)^2 \le f(\eta_+R)\text{ for all $R$ large enough}.
  \end{equation}
  Then there exists a finitely generated group with growth $\sim f$.
\end{maintheorem}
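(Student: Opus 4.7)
The plan is to realize $f$ as the growth of a branched group in a suitably parametrized family $\{G_\omega\}$ of self-similar groups acting on a rooted tree, extending the construction of \cite{bartholdi-erschler:permutational}. Each $G_\omega$ should have a finite-index subgroup embedding into $G_{\sigma\omega}\times G_{\sigma\omega}$, where $\sigma$ is a shift on the parameter sequence $\omega$, and this embedding should satisfy a length-contraction property of the form $|g_0|+|g_1|\le |g|/\eta_+ + O(1)$. The cubic $X^3-X^2-2X-4$ is expected to arise as the characteristic polynomial of a linear recursion controlling the maximal asymptotic contraction of certain ``portmanteau'' generators across a three-level window of the tree, which singles out $\eta_+$ as the sharp contraction constant.

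From this contraction, I expect to deduce matching bounds of the form
\[
v_{\sigma\omega}(R/\eta_+ - C)^{2} \;\lesssim\; v_\omega(R) \;\lesssim\; v_{\sigma\omega}(R/\eta_+ + C)^{2}\cdot\mathrm{poly}(R),
\]
which upon iteration express $v_\omega$ at scale $R$ in terms of $v_{\sigma^n\omega}$ at scale $R/\eta_+^{\,n}$. Given $f$, the realization will then proceed by a diagonal argument: the entries $\omega_1,\omega_2,\dots$ are chosen inductively so that at each scale $R\in[\eta_+^{\,n},\eta_+^{\,n+1}]$ the growth of $G_\omega$ is forced to track $f$ up to $\sim$-equivalence. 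The upper hypothesis $f(2R)\le f(R)^2$ in~\eqref{eq:gdoubling} merely reflects the submultiplicativity automatically enjoyed by any word-growth function, whereas the lower hypothesis $f(R)^2\le f(\eta_+R)$ is exactly what permits the recursive lower bound above to be propagated through all scales — if $f$ grew any more slowly, the doubling induced by the self-similar embedding would outpace it and no group in the family could attain $f$ from below.

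The main obstacle I foresee is the lower bound. Length contraction gives the upper bound on growth rather mechanically, but to show $v_{G_\omega}(R)\gtrsim f(R)$ one must exhibit enough provably distinct elements of each length $R$; this requires analysing when two portmanteau words represent the same element of $G_\omega$ and reducing any such coincidence to equalities in the descendant groups $G_{\sigma\omega}$, which are then obstructed by faithfulness of the tree action. A further delicate point is to ensure that the diagonal choice of $\omega$ can be carried out while simultaneously respecting both bounds at every scale; the polynomial slack in the recursion, together with the discrete set of admissible values of $\omega_n$, must be fine enough to calibrate $G_\omega$ against $f$ without overshooting.
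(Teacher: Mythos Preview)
Your outline has the right high-level architecture --- a family of self-similar groups indexed by sequences, with a shift and a contraction inequality, and a diagonal choice of $\omega$ to track $f$ --- but there is a genuine gap in the lower bound, and a related misconception about the contraction.

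\textbf{The lower bound.} You propose to obtain $v_{G_\omega}(R)\gtrsim f(R)$ by distinguishing portmanteau words via faithfulness of the tree action, together with a recursion $v_{\sigma\omega}(R/\eta_+-C)^2\lesssim v_\omega(R)$. Neither step works as stated. The embedding $\phi\colon G_\omega\hookrightarrow G_{\sigma\omega}\wr\sym_2$ is far from surjective onto $G_{\sigma\omega}^2$, so there is no such two-sided recursion on volumes; the branched condition only gives $K_{\sigma\omega}^2\le\phi(K_\omega)$ for a finite-index subgroup, with no metric control in the reverse direction. More fundamentally, obtaining sharp lower bounds on $v_{G_\omega}$ by direct combinatorics on words is exactly the problem that has resisted attack for thirty years (cf.\ \S\ref{ss:prior}). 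The paper bypasses this entirely: it does \emph{not} compute the growth of $G_\omega$. Instead it forms the permutational wreath product $W_\omega=A\wr_X G_\omega$ for a finite group $A$, and bounds $v_{W_\omega}$ from below via the \emph{inverted orbit growth} $\Delta(R)$ of $G_\omega$ on $X$ (Lemma~\ref{lem:growthW}). The quantity $\Delta(R)$ admits a sharp lower bound because one can write down explicit substitution words whose inverted orbits have size $\ge 2^k$ (Proposition~\ref{prop:invgrowth}); no word-problem analysis is needed. This device is the heart of the argument, and it is absent from your proposal.

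\textbf{The contraction constant.} You write the contraction as $|g_0|+|g_1|\le|g|/\eta_++O(1)$ with a fixed $\eta_+$. In the paper the contraction factor is $2/\eta_i$ with $\eta_i\in(2,3)$ depending on $i$, and more precisely on a varying choice of weighted word metric on each $G_{s^i\omega}$, parametrized by a point $V_i$ in a $2$-simplex (\S\ref{ss:metrics}, Lemma~\ref{lem:contraction}). The constant $\eta_+$ is not a universal contraction rate but the \emph{maximal} geometric mean of the $\eta_i$ along periodic orbits, achieved only for $\omega=\overline{\mathbf{012}}$. The diagonal construction you envisage is carried out not by choosing $\omega_n$ against a fixed recursion, but by alternating blocks $(\mathbf{012})^{i_k}$ (which push the product $\eta_0\cdots\eta_{k-1}$ towards $\eta_+^k$) and $\mathbf{2}^{j_k}$ (which push it towards $2^k$); Lemma~\ref{lem:expconv} quantifies how fast each regime takes effect, and this is what makes the calibration against $f$ possible.
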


Set $\alpha_-=\log2/\log\eta_+\approx0.7674$; then every
submultiplicative function that grows uniformly faster (in the sense
of~\eqref{eq:gdoubling}) than $\exp(R^{\alpha_-})$ is equivalent to
the growth function of a group.

The growth function of a group is necessarily nondecreasing and
submultiplicative ($f(R+R')\le f(R)f(R')$). Note
that~\eqref{eq:gdoubling} implies that $f$ is equivalent to a
monotone, submultiplicative function.

The following examples of functions all satisfy~\eqref{eq:gdoubling};
so special cases of Theorem~\ref{thm:main} give:
\begin{itemize}
\item for every $\alpha\in[\alpha_-,1]$, there exists a group of
  growth $\sim\exp(R^\alpha)$;
\item there exists groups of growth $\sim\exp(R/\log R)$, of growth
  $\sim\exp(R/\log\log R)$, of growth $\exp(R/\log\cdots\log R)$;
\item there exists a group of growth $\sim\exp(R/A(R,R)^{-1})$, for
  $A(m,n)$ the Ackermann function; this last growth function is faster
  than any subexponential primitive-recursive function;
\item for every $\alpha\le\beta\in[\alpha_-,1)$, there exists a group
  whose growth accumulates both at $\exp(R^\alpha)$ and at
  $\exp(R^\beta)$; this recovers a result by Brieussel,
  see~\cite{brieussel:growth} and~\S\ref{ss:prior}.
\end{itemize}

\begin{remark}\label{rem:main}
  Furthermore, the group in Theorem~\ref{thm:main} may be chosen to be
  of the form $W_\omega:=A\wr_X G_\omega$ for a Grigorchuk group
  $G_\omega$ and a finite group $A$; so it is residually finite as
  soon as $A$ is abelian. Briefly, $G_\omega$ is defined by its action
  on the set of infinite sequences $\{1,2\}^\infty$, and $X$ is the
  orbit of $2^\infty$; the wreath product $W_\omega$ is the semidirect
  product $(\bigoplus_X A)\rtimes G_\omega$ for the natural action of
  $G_\omega$ on $X$. See~\S\ref{ss:ss} for details.

  We give illustrations in~\S\ref{ss:examples} of sequences $\omega$
  and the growth of the corresponding $W_\omega$.


  If additionally $f(R)^{2+2\log\eta_+/\log R}\le f(\eta_+R)$ for all
  $R$ large enough, then there exists a torsion-free,
  residually finite, finitely generated group with growth $\sim
  f$. This group may be chosen to be an extension $\tilde G_\omega$ of
  $G_\omega$ with abelian kernel. See~\S\ref{ss:torsionfree} for details.

  If $f$ is recursive, both of these groups may be chosen to be
  recursively presented, and to have solvable word problem;
  see~\S\ref{ss:presentations} and Remark~\ref{rem:solvablewp} for
  details.
\end{remark}

We obtain more precise information on $W_\omega$ in some particular
cases. A group $G$ is \emph{self-similar} if it is endowed with a
homomorphism $\phi:G\to G\wr_{\{1,\dots,d\}}\sym_d$ from $G$ to its
permutational wreath product with the symmetric group $\sym_d$. The
self-similar group $G$ is \emph{branched} if there exists a
finite-index subgroup $K\le G$ such that $\phi(K)$ contains $K^d$;
see~\S\ref{ss:ss} for more details, or~\cite{bartholdi-g-s:bg} for a
survey of consequences of the property. A self-similar group $G$ with
proper metric $\|\cdot\|$ is \emph{sum-contracting} if $\phi$ is
injective and there exist constants $\lambda<1$ (the \emph{contraction
  \coefficient}) and $C$ such that, writing
$\phi(g)=\pair{g_1,\dots,g_d}\pi$, we have
\[\sum_{i=1}^d\|g_i\|\le\lambda\|g\|+C.\]

A weaker contraction property `$\max\|g_i\|\le\lambda\|g\|+C$' is used
extensively to study self-similar groups, and is at the heart of all
inductive proofs. A key observation by Grigorchuk in the early 1980's
was that the (somewhat counterintuitive) sum-contracting property
holds for a finitely generated group (he showed that the group
$G_{\overline\zot}$ is sum-contracting with $d=8$ and
$\lambda=\frac34$). Let $G$ be sum-contracting with contraction
\coefficient\ $\lambda$, and write $\eta=d/\lambda$. Then $G$ has
growth at most $\exp(R^{\log d/\log\eta})$. Note that there are
countably many sum-contracting groups, since such a group is
determined by a choice of generators, of map $\phi$, and of which of
the words of length $\le C/(1-\lambda)$ are trivial;
see~\S\ref{ss:presentations} for details.

\begin{maintheorem}\label{thm:periodic}
  For every periodic sequence $\omega$ containing all three letters
  $\mathbf0,\mathbf1,\mathbf2$ and every non-trivial finite group $A$,
  the finitely generated group $W_\omega:=A\wr_X G_\omega$ is
  branched, sum-contracting, and has growth $\sim\exp(R^\alpha)$ for
  some $\alpha\in(\alpha_-,1)$.

  Furthermore, the set of growth exponents $\alpha$ arising in this
  manner is dense in $[\alpha_-,1]$.

  For these same $\alpha$, there exists a torsion-free self-similar,
  residually finite group $\tilde G_\omega$ of growth $\sim
  \exp(R^\alpha\log R)$.
\end{maintheorem}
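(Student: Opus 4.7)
The plan is to exploit the substitutional nature of periodic sequences in order to reduce the analysis of $W_\omega$ to that of a single self-similar wreath product on a level $d=2^p$, where $p$ is the period of $\omega$. Iterating the defining self-similarity map $\phi$ of $G_\omega$ $p$ times realises $G_\omega$ as a subgroup of $G_\omega\wr_{\{1,\dots,d\}}\sym_d$; this in turn induces a self-similar embedding $\phi\colon W_\omega\hookrightarrow W_\omega\wr_{\{1,\dots,d\}}\sym_d$, using that the base group $\bigoplus_X A$ splits coordinatewise under $\phi$.

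Next I would verify branchedness and sum-contraction. Branchedness of $G_\omega$ is classical for periodic $\omega$ containing all three letters $\mathbf 0,\mathbf 1,\mathbf 2$, with branching subgroup $K_\omega$ of finite index; its preimage together with the full base group yields a finite-index branching subgroup of $W_\omega$ satisfying $\phi(K)\supseteq K^d$. For sum-contraction, the task is to estimate $\sum_{i=1}^d\|g_i\|$ in terms of $\|g\|$ uniformly over $g\in W_\omega$; because $\omega$ is periodic this reduces, after passing to level $p$, to a finite combinatorial computation on the nucleus of the underlying automaton. Choosing $p$ large enough to separate the letters one obtains an explicit contraction \coefficient\ $\lambda_\omega<1$; setting $\eta_\omega=d/\lambda_\omega=2^p/\lambda_\omega$, the upper bound $v_{W_\omega}(R)\preceq\exp(R^{\alpha_\omega})$ with $\alpha_\omega=\log 2^p/\log\eta_\omega$ is then immediate from the general sum-contraction estimate recalled before the theorem. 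The matching lower bound comes from applying Theorem~\ref{thm:main} to the function $f(R)=\exp(R^{\alpha_\omega})$, which satisfies~\eqref{eq:gdoubling} precisely because $\eta_\omega\le\eta_+$, and from checking that the group produced there is (or can be taken to be) $W_\omega$ itself.

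For the density statement, I would vary $\omega$ over all periodic sequences containing the three letters, with increasing period. The exponent $\alpha_\omega$ depends on the empirical frequencies of $\mathbf 0,\mathbf 1,\mathbf 2$ within one period of $\omega$, through the spectral data of the substitutional system; as these frequencies range over all rational triples summing to $1$, the resulting $\alpha_\omega$ trace a dense subset of $[\alpha_-,1]$, with $\alpha_-$ attained in the limit $\omega=\overline\zot$ and values close to $1$ arising when one of the three letters becomes rare. The main obstacle is precisely here: one must control $\lambda_\omega$, and hence $\alpha_\omega$, finely enough as a function of $\omega$ to exhibit the claimed density, and not merely a countable set of exponents; this requires combining the substitutional control above with the explicit lower bounds for permutational wreath products developed in~\cite{bartholdi-erschler:permutational}.

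Finally, for the torsion-free statement I would invoke the construction $\tilde G_\omega$ from Remark~\ref{rem:main}: it is a residually finite, self-similar abelian extension of $G_\omega$ chosen to remove torsion while preserving the self-similar and branched structure. The same sum-contraction analysis applies to $\tilde G_\omega$ with the same \coefficient\ $\lambda_\omega$; the abelian kernel contributes, at each of the $O(\log R/\log(1/\lambda_\omega))$ levels of descent, a polynomial factor, which multiplies the exponent in the growth estimate by $\log R$ and yields growth $\sim\exp(R^{\alpha_\omega}\log R)$.
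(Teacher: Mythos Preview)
Your proposal has two genuine gaps.

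First, the lower bound on the growth of $W_\omega$. You propose to invoke Theorem~\ref{thm:main} for $f(R)=\exp(R^{\alpha_\omega})$ and then check that the group it produces ``is (or can be taken to be) $W_\omega$ itself.'' But Theorem~\ref{thm:main} constructs a specific sequence $\omega'$ from $f$, and there is no reason that $\omega'$ should coincide with your periodic $\omega$; the construction in~\S5 typically yields a non-periodic sequence. This is circular. The paper instead obtains the matching lower bound directly for the given $W_\omega$: Proposition~\ref{prop:sprad=>estimate} computes the growth from Corollary~\ref{cor:global}, which in turn rests on the inverted-orbit lower bound of Proposition~\ref{prop:invgrowth} (via the substitutions $\zeta_x$) combined with Lemma~\ref{lem:growthW}. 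The key point you are missing is that the contraction coefficient and the inverted-orbit exponent are governed by the \emph{same} quantity $\eta$, coming from the eigenvector of $M_{\omega_{k-1}}\cdots M_{\omega_0}$ used as the metric $V_0$.

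Second, and more seriously, your density argument fails. The exponent $\alpha_\omega$ is \emph{not} determined by the letter frequencies: it equals $\log2/\log\eta$ where $\eta^k$ is the spectral radius of the ordered product $M_{\omega_{k-1}}\cdots M_{\omega_0}$, and this depends on the order of the letters---the examples following the theorem show that $\overline{\mathbf0^2\mathbf{12}}$ and $\overline{\mathbf{0102}}$, with identical frequencies, give different $\eta$. The paper's density proof (Proposition~\ref{prop:cesaro}) is a genuinely dynamical argument on the simplex: one identifies $\log\eta$ with the Ces\`aro average $\theta^+(p)$ of a continuous function along the periodic orbit of the expanding map $f$ on $\Delta$, and then uses a specification-type construction---building periodic orbits that spend prescribed proportions of time near two given periodic orbits $p,p'$---to realise averages dense in $[\theta^+(p),\theta^+(p')]$. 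Taking $p$ the fixed point for $\overline\zot$ and $p'$ corresponding to $\overline{\mathbf0^u\mathbf{12}}$ with $u\to\infty$ fills out $[\log2,\log\eta_+]$. No frequency-counting shortcut for this step is visible.
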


For example, $W_{\overline\zot}$ has growth
$\sim\exp(R^{\alpha_-})$, as was shown
in~\cite{bartholdi-erschler:permutational}. The growth of $W_\omega$
for some other periodic sequences is as follows:
\begin{itemize}
\item For $\omega=\overline{\mathbf 0^2\mathbf{12}}$, the growth of $W_\omega$
  is $\sim\exp(R^{\log2/\log\eta})$ with $\eta\sim2.4057$ the positive
  root of $X^{12}-39X^8+192X^4-256$.
\item For $\omega=\overline{\mathbf{0102}}$, the growth of $W_\omega$
  is $\sim\exp(R^{\log2/\log\eta})$ with $\eta\sim2.4283$ the positive
  real root of $X^6+5X^4-8X^2-16$.
\item For $\omega=\overline{\mathbf 0^{t-2}\mathbf{12}}$ and
  $t\to\infty$, the constant $\eta$ converges to $2$; more precisely,
  $\eta$ is the positive real root of
  $X^{3t}-(2^{t+1}+2^{t-1}-1)X^{2t}+(2^{2t-1}+2^{t+2})X^t-2^{2t}$.

  For $t$ large, that polynomial is approximately proportional to
  $(X/2)^{2t}-\frac52(X/2)^t+\frac12$, so
  $\eta\approx2\sqrt[t]{(5+\surd17)/4}\approx2(1+C/t)$ with
  $C=\log((5+\surd17)/4)$. This gives growth
  $\sim\exp(R^{1-C'/(t\log2)})$ for $C'\to C$ as $t\to\infty$.
\item For $\omega=\overline{\mathbf 0^t\mathbf 1^t\mathbf 2^t}$ and
  $t\to\infty$, the constant $\eta$ also converges to $2$; more
  precisely, $\eta$ is the positive real root of
  \[X^{9t}-(4\cdot2^{3t}-6\cdot2^{2t}+6\cdot2^t-1)X^{6t}-(2^{6t}-6\cdot2^{5t}+6\cdot2^{4t}-4\cdot2^{3t})X^{3t}-2^{6t}.\]

  For $t$ large, that polynomial is approximately a multiple of
  $(X/2)^{6t}-4(X/2)^{3t}-1$, so
  $\eta\approx2\sqrt[3t]{2+\surd5}\approx2(1+C/t)$ with
  $C=\log(2+\surd5)$. This gives growth
  $\sim\exp(R^{1-C'/(t\log2)})$ for $C'\to C$ as $t\to\infty$.
\item If two finite sequences $\omega,\omega'$ may be obtained one
  from the other by cyclic permutation of the letters and permutation
  of the labels $\mathbf0,\mathbf1,\mathbf2$, then the groups
  $W_{\overline\omega}$ and $W_{\overline{\omega'}}$ have
  commensurable direct powers, so their growth functions are
  equivalent. Furthermore, if $\omega$ is the reverse of $\omega'$,
  then $W_{\overline\omega}$ and $W_{\overline{\omega'}}$ have same
  growth; this also happens in other cases,
  e.g. $\omega=\mathbf{000102211}$ and
  $\omega'=\mathbf{002002111}$. The corresponding groups do not have
  commensurable direct powers, at least if $A$ has odd order; indeed
  such a commensuration would have to be an isomorphism that restricts
  to an isomorphism from $G_{\overline\omega}$ to
  $G_{\overline{\omega'}}$, but these last groups are not isomorphic,
  see~\cite{grigorchuk:gdegree}.
\end{itemize}

These examples were obtained with the help of
Proposition~\ref{prop:sprad=>estimate}, by computing the
characteristic polynomials of the corresponding products of
matrices~\eqref{eq:matrices}.

\subsection{Growth of groups and wreath products}
We define a partial order on growth functions $v\colon\R_+\to\R_+$, as
follows. Say $v\precsim v'$ if, for some constants $C,D>0$, we have
$v(R)\le v'(CR)$ for all $R>D$. Then $v\sim v'$ if $v\precsim
v'\precsim v$.

We recall briefly the classical lemma that the equivalence class of a
growth function is independent of the generating set. More generally,
we shall consider \emph{weighted word metrics} of groups, given by a
weight function $\|\cdot\|\colon S\to\R_+^*$ for a generating set
$S$. The \emph{norm} of a word $w=s_1\dots s_\ell$ is then
$\|s_1\|+\cdots+\|s_\ell\|$, and the norm of $g\in G$ is the infimum
of norms of words representing it. The growth function is then
$v(R)=\#\{g\in G\mid R\ge\|g\|\}$.

Consider a group $A$, and a group $G$ acting on the right on a set
$X$. Their \emph{permutational wreath product} is $W=A\wr_X G:=(\sum_X
A)\rtimes G$, with the natural permutation action of $G$ on $\sum_X
A$. If $A,G$ are finitely generated and $G$ acts transitively on $X$,
then $W$ is finitely generated. Fix a basepoint $\rho\in X$; then $W$
is generated by $S\cup T$, with $S$ a generating set of $G$ and $T$
the set of functions $X\to A$ that vanish outside $\rho$ and take
values in a fixed generating set of $A$ at $\rho$.

The growth of $W$ is intimately related to the \emph{inverted orbit
  growth} of $G$ on $X$: the maximum $\Delta(R)$, over all words
$s_1\dots s_\ell\in S^*$ of norm at most $R$, of $\#\{\rho s_1\cdots
s_\ell,\rho s_2\cdots s_\ell,\dots,\rho s_\ell,\rho\}$. This was
already used in~\cite{bartholdi-erschler:permutational} to determine
the growth of $W_{\overline\zot}$, and
in~\cite{bartholdi-erschler:boundarygrowth} to prove that certain
permutational wreath products have exponential growth.

\subsection{Sketch of the argument}
Let us consider infinite sequences $\omega$ over the alphabet
$\{\mathbf0,\mathbf1,\mathbf2\}$ which alternates appropriately in
segments of the form $(\zot)^{i_k}$ and $\mathbf2^{j_k}$. For any such
sequence, Grigorchuk constructed in~\cite{grigorchuk:gdegree} a
$4$-generated group $G_\omega=\langle a,b,c,d\rangle$ of intermediate
growth, acting on the infinite binary rooted tree $\mathcal T$,
see~\S\ref{ss:grig}. The first generator permutes the two top-level
branches of $\mathcal T$, while the next three fix an infinite ray
$\rho$ in $\mathcal T$ and permute branches in the immediate
neighbourhood of $\rho$ according to $\omega$; the $i$th letter of
$\omega$ determines how $b,c,d$ act on the neighbourhood of the $i$th
point of $\rho$.

Let $X$ denote the orbit of $\rho$ under $G_\omega$, choose a
non-trivial finite group $A$, say $\Z/2\Z$, and define
$W_\omega=A\wr_X G_\omega$, the permutational wreath product of $A$
and $G_\omega$; see~\S\ref{ss:womega}.

We estimate the growth of $W_\omega$ by bounding from above the word
growth of $G_\omega$ and by bounding from above and below the inverted
orbit growth of $G_\omega$ on $X$, as
in~\cite{bartholdi-erschler:permutational}. The changes with respect
to~\cite{bartholdi-erschler:permutational} may be summarized as
follows: instead of considering a fixed group $G_\omega$, we consider
a sequence of groups $G_i$, and a dynamical system relating optimal
choices of metrics on them. We relate the growth of the $G_i$ to
properties of the dynamical system. The example of $W_{\overline\zot}$
is recovered as a $3$-periodic orbit in the dynamical system.

Let $G_i$ denote the group $G_{s^i\omega}$ for $s$ the shift map on
sequences. There are then injective homomorphisms $\phi_i\colon G_i\to
G_{i+1}\wr\sym_2$. For each $i$ we consider a space of metrics on
$G_i$, see~\S\ref{ss:metrics}. These metrics are specified by setting
the lengths of $a,b,c,d$, and are naturally parameterized by the open
$2$-simplex. The boundary of the simplex corresponds to degenerate
metrics, in which $\|a\|=0$, while the corners of the simplex
correspond to even more degenerate metrics, in which $\|a\|=\|x\|=0$
for some $x\in\{b,c,d\}$. The scale of the metric is set by the
condition $\|ab\|+\|ac\|+\|ad\|=1$.

We select for each $i$ a metric $\|\cdot\|_i$ on $G_i$ and a parameter
$\eta_i\in(2,3)$, such that the map $\phi_i$ is coarsely
$(2/\eta_i)$-Lipschitz (i.e.\ satisfies
$\|\phi_i(g)\|_{i+1}\le2/\eta_i\|g\|_i+C$ for a constant $C$) with
respect to the metric on $G_i$ and the $\ell_1$ metric on
$G_{i+1}\times G_{i+1}$; see Lemma~\ref{lem:contraction}. The optimal
choice of metrics is studied via a projective dynamical system on the
simplex. In fact, $\eta_i$ depends analytically on $\|\cdot\|_i$ and
the $i$th letter of $\omega$.  Whenever $\omega$ has long constant
subwords, the metrics degenerate towards the boundary of the simplex,
but do so in a controlled manner; in particular, we make sure that the
metrics never degenerate to a corner of the simplex.

We arrive at the heart of the argument. The first step is to deduce,
in Proposition~\ref{prop:gomega}, an upper bound on the word growth of
$G_i$, in terms of $\eta_1\cdots\eta_i$.

The next step, given in Proposition~\ref{prop:invgrowth}, deduces
sharp upper and lower bounds on the inverted orbit growth of $G_i$ on
$X$. The first two steps together give sharp bounds on the growth of
$W_\omega$, see Corollary~\ref{cor:global}.

For Theorem~\ref{thm:main}, the last step is to construct, out of a
function $f$ satisfying~\eqref{eq:gdoubling}, a sequence $\omega$ such
that $\eta_1\cdots\eta_i$ grows appropriately in relation to $f$; see
the precise statement in Lemma~\ref{lem:expconv} and~\eqref{eq:AB}.

For Theorem~\ref{thm:periodic}, the last step is to deduce, from
expansion properties of the dynamical system on the $2$-simplex, that
the averages $(\eta_1\cdots\eta_i)^{1/i}$ along periodic orbits of the
dynamical system take a dense set of values in $[2,\eta_+]$.

\subsection{Prior work}\label{ss:prior}
In 1983, Grigorchuk proved the existence of groups with intermediate
growth function between polynomial and exponential. This answered a
long-standing question asked by Milnor~\cite{milnor:5603}. Since then,
the upper bound on the growth of Grigorchuk's group
has been improved to
\[v_{G_{\overline\zot}}\precsim\exp(R^{\alpha_-})\approx\exp(R^{0.7674}),\]
see~\cite{bartholdi:upperbd}; and lower bounds of the form $\exp(R^\beta)$
have been found, but for $\beta$ substantially lower than $\alpha_-$;
see~\cites{muchnik-p:growth,leonov:lowerbd,brieussel:phd,bartholdi:lowerbd}.
Note that previous literature often uses the notation $\eta=2/\eta_+$.

Grigorchuk constructed in~\cite{grigorchuk:gdegree}*{Theorem~7.2} a
continuum of $2$-groups $G_\omega$ of intermediate growth, for
$\omega$ suitably chosen among infinite sequences over the alphabet
$\{\mathbf0,\mathbf1,\mathbf2\}$.  Similar constructions of
$p$-groups for any prime $p$, and of torsion-free groups were given
in~\cite{grigorchuk:pgps}.  He proved that many $G_\omega$ have
nonequivalent growth functions, that their growth functions are not
totally ordered, and that for every subexponentially growing function
$\rho$ there exists a group whose growth function $v$ satisfies
$v\not\precsim\rho$ and $v\prec\exp(R)$. This argument was modified
in~\cite{erschler:degrees} to show that for every such $\rho$ there
exists a group whose growth function satisfies $\rho\prec
v\prec\exp(R)$.

Grigorchuk's argument was to consider the groups $G_\omega$ for
$\omega$ containing very long subsequences of the form
$\mathbf2^{j_k}$; he showed that the growth of $G_\omega$ approaches
the exponential function, at least on a subsequence. For these same
sequences, we show that the growth of $W_\omega$ uniformly approaches
the exponential function.

At the other extreme, if $\omega=\overline\zot$, the growth of
$G_\omega$ is minimal among the known superpolynomial growth
functions.

Generally, good lower bounds for the growth of $G_\omega$ seem much
harder to construct than good upper bounds. On the one hand, there are
combinatorial constructions of sufficiently many elements in the ball
of radius $R$,
see~\cite{bartholdi:lowerbd,leonov:lowerbd,brieussel:phd};
probabilistic methods have also been used,
see~\cites{erschler:boundarysubexp,erschler:critical}.

A previous article by the
authors~\cite{bartholdi-erschler:permutational} explores the growth of
permutational wreath products of the form $A\wr_X G:=A^X\rtimes G$,
for a group $A$ and a group $G$ acting on a set $X$. They obtained
in this manner groups of intermediate growth $\sim\exp(R^\alpha)$ for
a sequence of $\alpha\to1$; those were the first examples of groups of
intermediate growth for which the growth function was determined. The
method of proof is an estimation of the growth of ``inverted orbits''
of $G$ on $X$, see~\S\ref{ss:inverted}.

This construction was used by Brieussel~\cite{brieussel:growth}, who
gave for every $\alpha\le\beta\in(\alpha_-,1)$ a group whose
growth function oscillates between $\exp(R^\alpha)$ and
$\exp(R^\beta)$ in the sense that $\log\log v(n)/\log(n)$ accumulates
both at $\alpha$ and $\beta$.

Kassabov and Pak~\cite{kassabov-pak:growth} construct, for
``sufficiently regular'' functions $v_{G_{\overline\zot}}\precsim
g_-\precsim g_+\precsim f_-\precsim f_+\precsim\exp(R)$, a group whose
growth function is between $g_-$ and $f_+$, and takes infinitely often
values in $[f_-,f_+]$ and in $[g_-,g_+]$.


Until recently, the analogue of Theorem~\ref{thm:main} wasn't even
known in the class of semigroups; though Trofimov shows
in~\cite{trofimov:growth} that for any $f_-\succnsim R^2$ and
$f_+\precnsim\exp(R)$ there exists a $2$-generated semigroup with
growth function infinitely often $\le f_-$ and infinitely often $\ge
f_+$. For every submultiplicative function $f$, a semigroup with
growth between $f(R)$ and $Rf(R)$ is constructed
in~\cite{bartholdi-smoktunowicz:images}. In particular, if $f(CR)\ge
Rf(R)$ for some $C>0$ and all $R\in\N$, then there exists a semigroup
with growth $\sim f$. In other words, all growth functions uniformly
above $R^{\log R}$ are realizable as the growth of a
semigroup. Warfield showed in~\cite{warfield:tensor} that for every
$\beta\in[2,\infty)$ there exists a semigroup with growth $\sim
R^\beta$. The only known gap in growth functions of semigroups is
Bergman's, between linear and
quadratic~\cite{krause-l:gkdim}*{Theorem~2.5}.

\subsection{Open problems}
We would very much like to obtain a complete description of which
equivalence classes of growth functions may occur as the growth of a
group. Thanks to Gromov's result~\cite{gromov:nilpotent} that groups
of polynomial growth are virtually nilpotent and therefore of growth
$\sim n^d$ for an integer $d$, we are interested in groups of
intermediate growth. The groups that we construct have growth above
$\exp(R^{\alpha_-})$. A tantalizing open problem in the existence of
groups of growth between polynomial and $\exp(\sqrt n)$; a conjecture
of Grigorchuk asserts that they do not exist. It is not even known
whether there are groups whose growth is strictly between polynomial
and $\exp(R^{\alpha_-})$.




\section{Self-similar groups}\label{ss:ss}
Self-similar groups are groups endowed with a \emph{self-similar}
action on sequences, namely, an action that is determined by actions
on subsequences. We adopt a more algebraic notation.

\begin{definition}\label{defn:ss}
  A \emph{self-similar group} $G$ is a group endowed with a
  homomorphism $\phi:G\to G\wr\sym_d$, for some $d\in\N$.

  A \emph{self-similar group sequence} is a sequence $(G_0,G_1,\dots)$
  of groups, with embeddings $\phi_i:G_i\to G_{i+1}\wr\sym_{d_i}$ for
  all $i\in\N$.
\end{definition}

A self-similar group gives rise to a self-similar group sequence: if
$\phi:G\to G\wr\sym_d$, then set $G_i=G$, $d_i=d$ and $\phi_i=\phi$
for all $i\in\N$.

Let $(G_i)$ be a self-similar group sequence, the $d_i$ and $\phi_i$
being implicit in the notation. Define
\[T_0=\bigsqcup_{i\in\N}\{1,\dots,d_0\}\times\cdots\times\{1,\dots,d_{i-1}\}.
\]
This is the vertex set of a rooted tree (with root the empty product
$\emptyset$), if one puts an edge between $x_0\dots x_{i-1}$ and
$x_0\dots x_{i-1}x_i$ for all $x_j\in\{1,\dots,d_j\}$. We denote also
this tree by $T_0$.

The group $G_0$ acts by isometries on $T_0$ as follows. Given $g\in G$
with $\phi_0(g)=(f,\pi)$ and $x_0\dots x_{i-1}\in T_0$, set
\[(x_0\dots x_{i-1})^g=\begin{cases}\emptyset & \text{ if }i=0,\\
  (x_0^\pi)\,(x_1\dots x_{i-1})^{f(x_0)} & \text{ inductively if }i>0.\end{cases}
\]

The boundary of $T_0$ is naturally identified with the set of infinite sequences
\[\partial T_0=\prod_{i\in\N}\{1,\dots,d_i\},\]
endowed with the product topology, and the action of $G$ on $T_0$
extends to a continuous action on $\partial T_0$.

\begin{definition}
  The self-similar group sequence $(G_i)$ is \emph{branched} if there
  exist for all $i\in\N$ a finite-index subgroup $K_i\le G_i$ such
  that $K_{i+1}^{d_i}\le\phi_i(K_i)$ for all $i\in\N$.
\end{definition}
For more information on branch groups, and the algebraic consequences
that follow from that property, we refer to~\cite{bartholdi-g-s:bg}.

\subsection{The Grigorchuk groups \boldmath $G_\omega$}\label{ss:grig}
Our main examples were constructed by Grigorchuk
in~\cite{grigorchuk:gdegree}. Let $\{\mathbf0,\mathbf1,\mathbf2\}$
denote the three non-trivial homomorphisms from the four-group
$\{1,b,c,d\}$ to the cyclic group $\{1,a\}$, ordered for definiteness
by the condition that, in that order, they vanish on $b,c,d$
respectively. Write $\Omega=\{\mathbf0,\mathbf1,\mathbf2\}^\N$ the
space of infinite sequences over $\{\mathbf0,\mathbf1,\mathbf2\}$,
endowed with the shift map $s\colon\Omega\to\Omega$ given by
$s(\omega_0\omega_1\dots)=\omega_1\omega_2\dots$.

Fix a sequence $\omega\in\Omega$, and define a self-similar group
sequence as follows. Take $d=2$, and write $\sym_d=\{1,a\}$. Each
$G_i$ is generated by $\{a,b,c,d\}$. Set
\[\phi_i(a)=\pair{1,1}a,\quad\phi_i(b)=\pair{\omega_i(b),b},\quad\phi_i(c)=\pair{\omega_i(c),c},\quad\phi_i(d)=\pair{\omega_i(d),d}.\]
The assertion that the $\phi_i$ are homomorphisms, and that the $G_i$
act faithfully on the binary rooted tree, define the groups $G_i$
uniquely.

Define the \emph{Grigorchuk group} $G_\omega$ as the group $G_0$
constructed above. We then have $G_i=G_{s^i\omega}$ for all $i\ge0$,
so there are homomorphisms $\phi_\omega:G_\omega\to
G_{s\omega}\wr\sym_2$, and $G_0$ is the first entry of a self-similar
group sequence.


If $\omega$ is not ultimately constant, it is known that each of the
$G_\omega$ have intermediate growth, with growth bounded from below by
$\exp(R^{1/2})$ and from above by $\exp(R)$; and no smaller function
may serve as upper bound~\cite{grigorchuk:gdegree}*{Theorem~7.1}.

(This may be also seen concretely as follows: consider the ``universal''
group $\widehat G$, defined as the diagonal subgroup generated by
$\hat a=(a,a,\dots),\hat b=(b,b,\cdots),\hat c=(c,c,\dots),\hat
d=(d,d,\dots)$ in $\prod_{\omega\in\Omega}G_\omega$. Then this group
has exponential growth --- indeed, $\{\hat a\hat b,\hat a\hat c,\hat
a\hat d\}$ freely generates a free semigroup.)

\begin{proposition}
  The group $G_\omega$ is the first term of a self-similar group
  sequence;
  \begin{enumerate}
  \item if $\omega$ contains only a finite number of one of the three
    symbols, then $G_\omega$ contains an element of infinite order;
  \item if the sequence $\omega$ contains infinitely many of each of
    the three symbols, then $G_\omega$ is an infinite, branched,
    torsion group;
  \item if $\omega$ is periodic and contains each of the three symbols
    $\mathbf0,\mathbf1,\mathbf2$, then $G_\omega$ is also self-similar.
  \end{enumerate}
\end{proposition}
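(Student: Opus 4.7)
The first clause is immediate from the construction: setting $G_i=G_{s^i\omega}$ and taking the homomorphisms $\phi_i$ defined just before the statement realizes $(G_i)_{i\in\N}$ as a self-similar group sequence with $d_i=2$ for all $i$. For~(3), $p$-periodicity gives $s^p\omega=\omega$, hence $G_p=G_\omega$; iterating the embeddings $\phi_{p-1}\circ\cdots\circ\phi_0$ and flattening the iterated wreath product via $\sym_2\wr\cdots\wr\sym_2\hookrightarrow\sym_{2^p}$ produces a homomorphism $G_\omega\to G_\omega\wr\sym_{2^p}$, which is the required self-similar structure with degree $d=2^p$.

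For~(2), I take $K_i$ to be the normal closure in $G_i$ of a finite set of commutators such as $\{[a,b]^2,[a,c]^2,[a,d]^2\}$, or a variant adapted to Grigorchuk's classical branching subgroup. Finite index of $K_i$ reduces to a computation in the abelianization of $G_i$ (a finite elementary abelian $2$-group) and in its first derived quotient. The inclusion $K_{i+1}\times K_{i+1}\le\phi_i(K_i)$ is verified by computing $\phi_i$ on the chosen commutator generators of $K_i$; the hypothesis that all three symbols $\mathbf 0,\mathbf 1,\mathbf 2$ occur infinitely often in the tail of $\omega$ is what guarantees that every necessary commutator lands nontrivially in each coordinate of $G_{i+1}\wr\sym_2$. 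Infiniteness of $G_\omega$ then follows by iterating: $G_\omega$ contains a quotient of $K_n^{2^n}$ for every $n$, and each $K_n$ is nontrivial (witnessed by an explicit commutator).

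The torsion property is the main obstacle and occupies the bulk of the proof. I would induct on word length. For $g\in G_\omega$, replace $g$ by $g^2$ if necessary so that $g$ stabilizes level one; then $\phi_0(g)=\pair{g_1,g_2}$ with $|g_1|+|g_2|\le|g|+1$. This is not a strict contraction on the nose, so one iterates the decomposition through $\phi_1,\phi_2,\phi_3$ and uses the fact that, because each of the three symbols appears in some window $\omega_i,\omega_{i+1},\omega_{i+2}$, each of $b,c,d$ is mapped to $\pair{1,\cdot}$ at least once across these three steps. This produces a strict contraction of total word length by a definite fraction across three levels, and an induction on length then bounds the order of $g$ by a finite power of $2$ depending only on its initial length.

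For~(1), assume without loss of generality that only finitely many $\mathbf 0$'s appear in $\omega$, and choose $N$ with $(s^N\omega)_i\ne\mathbf 0$ for all $i$. In $G_{s^N\omega}$ every $\omega_i$ sends $b$ to $a$, so $\phi_i(b)=\pair{a,b}$; direct calculation gives $\phi(ab)=\pair{b,a}a$ and, by induction, $(ab)^{2^k}=\pair{(ba)^{2^{k-1}},(ab)^{2^{k-1}}}$, so $ab$ has infinite order in $G_{s^N\omega}$. To produce an element of infinite order in $G_\omega$ itself, find a level-$N$ stabilizer element of $G_\omega$ whose iterated $\phi$-image in $G_{s^N\omega}^{2^N}$ has $ab$ in a distinguished coordinate, using the self-similar action on the finite tree of depth $N$. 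The principal technical difficulties are the length-contraction estimate powering the torsion proof in~(2), and, to a lesser extent, the explicit lifting step in~(1), which cannot directly invoke branchedness of~(2) since its hypothesis fails here.
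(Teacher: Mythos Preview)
Your treatment of the self-similar group sequence and of~(3) matches the paper's and is fine. The substantive issues are in~(2).

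\textbf{Torsion.} Your contraction argument has a genuine gap. You write that ``each of the three symbols appears in some window $\omega_i,\omega_{i+1},\omega_{i+2}$'', but the hypothesis is only that each symbol appears infinitely often, not in every block of length three. For $\omega=\mathbf0^{100}\mathbf1^{100}\mathbf2^{100}\mathbf0^{100}\cdots$ no length-three window sees all three symbols, so there is no uniform contraction over three levels. The correct Grigorchuk argument contracts over a \emph{variable} number of levels: one waits until the next occurrence of each symbol, and the contraction factor depends on how long one has to wait. The paper simply cites Grigorchuk for this (and for~(1)); if you want to reprove it, you must allow the window length to depend on position.

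\textbf{Branching subgroup.} Your proposed $K_i=\langle[a,b]^2,[a,c]^2,[a,d]^2\rangle^{G_i}$ is problematic: since $[a,b]^2=(ab)^4$ and $\phi_i((ab)^2)=\pair{\omega_i(b)b,\,b\,\omega_i(b)}$, one has $[a,b]^2=1$ whenever $\omega_i=\mathbf0$, so your generating set degenerates. More importantly, your finite-index claim (``reduces to a computation in the abelianization\dots and in its first derived quotient'') is not justified: knowing $[G_i:G_i']=8$ says nothing about $[G_i':K_i]$. The paper takes a different, $\omega_i$-dependent branching subgroup: $K_i=\langle[x_i,a]\rangle^{G_i}$ where $x_i\in\{b,c,d\}$ is the generator with $\omega_i(x_i)=1$. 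A single commutator identity $\phi_i([x_i,y_i^a])=\pair{1,[x_i,a]}$ then gives $K_{i+1}^2\le\phi_i(K_i)$. For finite index, the paper \emph{uses the torsion property}: $G_i/\langle x_i\rangle^{G_i}$ is generated by the two involutions $a,y_i$, hence dihedral, hence finite because $G_i$ is torsion; and $\langle x_i\rangle^{G_i}/K_i$ has order at most~$2$. So torsion is logically prior to finite index here, which is the reverse of how you have organized things.

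\textbf{Part (1).} Your infinite-order computation in the tail $G_{s^N\omega}$ is correct. The lifting to $G_\omega$ is workable but you have not actually carried it out; one clean way is to note that $b\in G_\omega$ projects to $b$ in the last coordinate of $G_{s^N\omega}^{2^N}$, and to produce a companion element projecting to $a$ there (e.g.\ a suitable conjugate of $a$), so that their product has infinite order.
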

\begin{proof}
  It was already proven in~\cite{grigorchuk:gdegree} that $G_\omega$
  is torsion if and only if $\omega$ contains all three symbols.

  To show that $G_\omega$ are branched, consider the groups
  $G_i=G_{s^i\omega}$ and homomorphisms $\phi_i:G_i\to
  G_{i+1}\wr\sym_2$. For each $i\in\N$, let $x_i\in\{b,c,d\}$ be such
  that $\omega_i(x_i)=1$, and set
  $K_i=\langle[x_i,a]\rangle^{G_i}$. Choose also
  $y_i\in\{b,c,d\}\setminus\{x_i\}$. Then $1\times K_{i+1}$ is
  normally generated by $\pair{1,[x_{i+1},a]}$, and
  \[\pair{1,[x_{i+1},a]}=\phi_i([x_i,y_i^a])=\phi_i([x_i,a][x_i,a]^{ay_i})\in \phi_i(K_i);\]
  the same holds for $\pair{[x_{i+1},a],1}$.

  Consider first $G_i/\langle x_i\rangle^{G_i}$. This group is
  generated by two involutions $a$ and $y_i$, so is a finite dihedral
  group, because $G_i$ is torsion. It follows that $\langle
  x_i\rangle^{G_i}$ has finite index in $G_i$. Then $\langle
  x_i\rangle^{G_i}/K_i=\langle x_i\rangle K_i$ has order $2$, so $K_i$
  also has finite index.

  Finally, if $\omega$ is periodic, say of period $n$, then $G_0=G_n$
  and the composition of the maps $\phi_i$ gives a homomorphism
  $\phi:G_0\to G_0\wr\sym_{2^n}$, showing that $G_\omega$ is
  self-similar.
\end{proof}

\subsection{The Groups \boldmath $W_\omega$}\label{ss:womega}
Fix a finitely generated group $A$, an infinite sequence
$\rho\in\{1,2\}^\infty$ such as for example $\rho=2^\infty$, and its orbit
$X:=\rho\cdot G_\omega$.  Consider $W_\omega:=A\wr_X G_\omega$.

\begin{proposition}\label{prop:Wbranched}
  The group $W_\omega$ is finitely generated, and is the first term of
  a branched self-similar group sequence.

  If $\omega$ and $\rho$ are periodic, then $W_\omega$ is also
  self-similar.

  If $\omega$ and $\rho$ are recursive and $A$ is recursively
  presented, then so is $W_\omega$. If moreover $A$ has a solvable
  word problem, then so does $W_\omega$.

  If $A$ is torsion and $\omega$ contains infinitely many of all three
  symbols, then $W_\omega$ is torsion.
\end{proposition}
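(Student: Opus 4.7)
The plan is to treat the four claims in turn, all building on the decomposition maps $\phi_i : G_i \to G_{i+1} \wr \sym_2$ for $G_i = G_{s^i\omega}$. Set $\rho_i = s^i\rho$, let $X_i = \rho_i \cdot G_i \subset \{1,2\}^\infty$, and put $W_i := A \wr_{X_i} G_i$, so $W_0 = W_\omega$. Finite generation of $W_\omega$ follows from the general principle recalled in the introduction that a permutational wreath product of finitely generated groups by a transitive action of a finitely generated group is finitely generated. To define $\phi_i^W : W_i \to W_{i+1} \wr \sym_2$, split the first tree level: $X_i = X_i^{(1)} \sqcup X_i^{(2)}$ with $X_i^{(j)} = X_i \cap j\{1,2\}^\infty$, and identify each $X_i^{(j)}$ with $X_{i+1}$ by deleting the leading letter (precomposed with the top swap $a$ when $j$ is not the first letter of $\rho_i$). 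This yields an isomorphism $A^{X_i} \cong A^{X_{i+1}} \oplus A^{X_{i+1}}$, which, coupled with $\phi_i$, produces the desired injective homomorphism $\phi_i^W$.

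For the branched property, take the finite-index subgroups $K_i \le G_i$ constructed in the proof of the preceding proposition, so $K_{i+1} \times K_{i+1} \le \phi_i(K_i)$, and set $L_i := A^{X_i} \rtimes K_i$; this has finite index in $W_i$. Since $\phi_i^W$ sends $A^{X_i}$ onto the whole base group $A^{X_{i+1}} \times A^{X_{i+1}}$ of $W_{i+1} \wr \sym_2$, we obtain $L_{i+1} \times L_{i+1} \le \phi_i^W(L_i)$, establishing branchness. In the periodic case, if $\omega$ and $\rho$ have a common period $N$ (note $\rho = 2^\infty$ is fixed by the shift, so any period of $\omega$ already works), then $W_N = W_0$, and composing $\phi_0^W, \dots, \phi_{N-1}^W$ produces a self-similar homomorphism $W_\omega \to W_\omega \wr \sym_{2^N}$.

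When $\omega, \rho$ are recursive and $A$ is recursively presented, a recursive presentation of $W_\omega$ is obtained by adjoining a recursive presentation of $G_\omega$ (whose relations can be enumerated by iteratively applying the $\phi_i$ to test triviality on the rooted tree) to recursive generators and relations of $A$, together with the semidirect-product commutation relations expressing the action of $G_\omega$ on $A^X$ via its action on $X$. For the word problem, we decide $w = 1$ in $W_\omega$ by applying $\phi_0^W$ to write $w = \pair{w_1,w_2}\pi$ with $w_j \in W_{s\omega}$, verifying $\pi = 1$, and recursing; the recursion eventually arrives at elements of $A$ located at single points of $X$, which are settled by the word problem in $A$. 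The main obstacle is termination of this recursion: one needs a word metric under which $\phi_i^W$ strictly shortens its input by a definite factor, and this is exactly the content of the sum-contracting property defined in the introduction, guaranteeing termination in $O(\log\|w\|)$ levels.

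Finally, for the torsion claim, the preceding proposition gives that $G_\omega$ is torsion under the hypothesis on $\omega$. For $(f, g) \in W_\omega$ let $n$ be the order of $g$; then $(f, g)^n = \bigl(\sum_{i=0}^{n-1} f^{g^i},\, 1\bigr) \in A^X$, which is a torsion group since each element has finite support and $A$ itself is torsion. A further power therefore annihilates the element, showing $(f,g)$ has finite order.
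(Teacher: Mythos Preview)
Your treatment of finite generation, the self-similar sequence $(W_i,\psi_i)$, branchedness via $L_i=A^{X_i}\rtimes K_i$, self-similarity in the periodic case, and torsion is correct and matches the paper's proof (your torsion argument just unpacks the one-line observation that an extension of torsion groups is torsion).

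The gap is in your word-problem argument. You justify termination of the recursion on $W_\omega$ by appealing to the sum-contracting property of $\phi_i^W$. This fails on two counts. First, sum-contraction for $W_\omega$ has not been established at this point in the paper; it is proved only later (Lemma~\ref{lem:B}), and there with an additive constant and with respect to the carefully chosen metrics of~\S\ref{ss:metrics}, not the naive word metric. Second, and more seriously, sum-contraction does \emph{not} give depth $O(\log\|w\|)$: it bounds the \emph{total} length across the $2^k$ branches at level $k$, not the length of any individual branch. What would give that depth bound is max-contraction, but $W_\omega$ is not max-contracting in general: if $w$ is a word purely in the $A$-generators at the basepoint $\rho$, then $\phi_0^W(w)$ places the entire word in a single coordinate, so $\max(\|w_1\|,\|w_2\|)=\|w\|$ with no shrinkage at all.

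The paper avoids this entirely by separating the two factors. One first decides whether the image of $w$ in $G_\omega$ is trivial, using only that $G_\omega$ itself is contracting (this is where recursion and termination live). If so, $w$ lies in $\sum_X A$; its support is the finite inverted orbit of $\rho$ under the word, which is computable because the action of $G_\omega$ on $X$ is computable when $\omega$ and $\rho$ are recursive. One then evaluates $w$ at each of these finitely many points and invokes the word problem in $A$. No contraction hypothesis on $W_\omega$ is needed.

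A smaller point on the presentation: your ``semidirect-product commutation relations expressing the action of $G_\omega$ on $A^X$'' must also include the relations $[s,(s')^g]=1$ for $s,s'$ generators of $A$ and $g\in G_\omega$ with $\rho g\ne\rho$, encoding that distinct copies of $A$ in $\sum_X A$ commute; without these one does not get a presentation of the restricted wreath product.
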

\begin{proof}
  Clearly $W_\omega$ is generated by $\{a,b,c,d\}\cup S$ for a finite
  generating set $S$ of $A$; we imbed $A$ in $W_\omega$ as those
  functions $X\to A$ supported on $\{\rho\}$.

  Let $(G_i)$ be the self-similar group sequence with
  $G_0=G_\omega$. Let $\rho_i$ be the subsequence of $\rho$ starting
  at position $i$.  Define $W_i=A\wr_{\rho_iG_i}G_i$, with imbeddings
  $\psi_i:W_i\to W_{i+1}\wr\sym_2$ given as follows. Consider
  $w=(u,g)\in W_i$, with $u\colon\rho_iG\to A$ and $g\in G_i$. Write
  $\phi_i(g)=(f,\pi)$ with $f\colon\{1,2\}\to G_{i+1}$ and
  $\pi\in\sym_2$. For $k\in\{1,2\}$, set $u_k(x):=u(k
  x)\colon\rho_{i+1}G_{i+1}\to A$. Then
  \[\psi_i(u,g)=(k\mapsto (u_k,f(k)),\pi).\]

  Furthermore, let $K_i$ be the finite-index subgroup of $G_i$ for
  which $(G_i)$ is branched. Then $L_i:=A\wr K_i$ has finite index in
  $W_i$, and shows that $(W_i)$ is branched.

  If $\omega$ is periodic, say of period $n$, then $G_0=G_n$ and we
  get, composing the maps $\phi_0,\dots,\phi_{n-1}$, an imbedding
  $\phi:G_0\to G_0\wr\sym_{2^n}$, showing that $G_\omega$ is
  self-similar. The same holds for $W_\omega$.

  For a presentation of $W_\omega$, see~\cite{bartholdi:lpres},
  \cite{cornulier:fpwreath}
  and~\cite{bartholdi-erschler:permutational}. Let $A$ be generated by
  a finite set $S$; then $W_\omega$ is generated by
  $S\sqcup\{a,b,c,d\}$. The relations of $W_\omega$ are the following:
  those of $A$; those of $G_\omega$; commutation relations
  $[s,(s')^g]$ for all $s,s'\in S$ and $g\in G_\omega$ not fixing
  $\rho$; and commutation relations $[s,g]$ for all $s\in S$ and $g\in
  G_\omega$ fixing $\rho$.

  If $\omega$ is recursive, then the action of $G_\omega$ on $X$ is
  computable, so the sets $\{w\in\{a,b,c,d\}^*\mid \rho w=w\}$ and
  $\{w\in\{a,b,c,d\}^*\mid \rho w\neq w\}$ are recursive; so
  $W_\omega$ is recursively presented.

  If moreover the word problem is solvable in $A$, then it is also
  solvable in $W_\omega$. Indeed $G_\omega$ is contracting, so the
  word problem is solvable in $G_\omega$;
  see~\S\ref{ss:presentations}. A word in $W_\omega$ is trivial if and
  only if its image in $G_\omega$ is trivial and its values at all
  $x\in X$ are trivial in $A$.

  If $\omega$ contains infinitely many of all three symbols, then
  $G_\omega$ is torsion; if moreover $A$ is torsion, then $W_\omega$,
  being an extension of two torsion groups, is also torsion.
\end{proof}

\subsection{The torsion-free groups \boldmath $\tilde G_\omega$}\label{ss:torsionfree}
Grigorchuk constructed in~\cite{grigorchuk:pgps}*{\S5} a torsion-free
group $\tilde G$ of intermediate growth; it was later noted
in~\cite{grigorchuk-m:interm} that $\tilde G$ acts continuously on the
interval $[0,1]$. We shall consider variants $\tilde G_\omega$ of his
construction, which we now recall.

The group $\tilde G_\omega$ is the first term of a self-similar group
sequence in the sense of Definition~\ref{defn:ss}, but the map
$\phi:\tilde G_\omega\to\tilde G_{s\omega}\wr\sym_2$ is not
injective. We construct the groups $\tilde G_\omega$ via a slightly
different definition. Each of the groups $\tilde G_\omega$ acts
faithfully on the rooted tree of infinite \arity, whose vertex set is
naturally identified with finite sequences over $\Z$, and whose
boundary is naturally the space $\Z^\infty$ of infinite sequences of
integers. Each $\tilde G_\omega$ is generated by $\{a,b,c,d\}$. Each
$\omega\in\{\mathbf0,\mathbf1,\mathbf2\}$ is now understood as a
homomorphism $\langle b,c,d\rangle\cong\Z^3\to\langle
a\rangle\cong\Z$, via $\mathbf0(b)=1$, $\mathbf0(c)=\mathbf0(d)=a$ etc.\
under cyclic permutation.  There is a homomorphism
$\tilde\phi_\omega\colon\tilde G_\omega\to\prod_\Z\tilde
G_\omega\rtimes\Z$, given by
\begin{align*}
  \tilde\phi_\omega(a) &= \pair{\dots,1,1,\dots}(n\mapsto n+1),\\
  \tilde\phi_\omega(x) &= \pair{\dots,\omega_0(x),x,\dots}\text{ for all }x\in\langle b,c,d\rangle,
\end{align*}
where the $\dots$ indicate that the sequence repeats
$2$-periodically. In particular, the map $a\mapsto a,b\mapsto
b,c\mapsto c,d\mapsto d$ extends to an epimorphism $\xi_\omega\colon\tilde
G_\omega\to G_\omega$.

The kernel of $\xi_{\overline\zot}$ is abelian, isomorphic to
$\Z\times\sum_X\Z^3$; this is essentially due to Grigorchuk,
see~\cite{bartholdi-erschler:permutational}*{Lemma~5.7} for a
proof. Note then that the argument extends with no changes to
$\ker\xi_\omega$ for arbitrary $\omega$. We proved in
\cite{bartholdi-erschler:permutational}*{Proposition~5.8} that $\tilde
G_{\overline\zot}$ has growth function equivalent to $\Z^3\wr
G_{\overline\zot}$; again, the argument extends without changes to all
$\tilde G_\omega$.

The following result essentially appears
in~\cite{grigorchuk-m:interm}; there, Grigorchuk and Mach\`\i\ show
that $\tilde G_{\overline\zot}$ is residually virtually nilpotent and
residually solvable.
\begin{lemma}\label{lem:tildegrf}
  The group $\tilde G_\omega$ is residually $2$-finite.
\end{lemma}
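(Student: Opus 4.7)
The plan is to construct, for every pair $n,k\ge 1$, a finite $2$-group quotient $Q_{n,k}$ of $\tilde G_\omega$ and to show that the intersection of their kernels is trivial.

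First I would observe that the defining action of $\tilde G_\omega$ on the $\Z$-arity rooted tree descends to an action on the rooted tree $T_{n,k}$ of degree $2^k$ and depth $n$, for every $k\ge 1$. This rests on two features of the map $\tilde\phi_\omega$: the top-level action of $a$ is the shift $m\mapsto m+1$, which is well-defined on $\Z/2^k\Z$; and the sequences $\pair{\dots,\omega_0(x),x,\dots}$ describing $\tilde\phi_\omega(x)$ for $x\in\{b,c,d\}$ are $2$-periodic, so descend to every quotient $\Z\to\Z/2^k\Z$ with $k\ge 1$. Compositions descend automatically, since the tree action is a group action. Let $Q_{n,k}$ denote the image of $\tilde G_\omega$ in $\mathrm{Aut}(T_{n,k})$.

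Next I would prove by induction on $n$ that $Q_{n,k}$ is a $2$-group. The case $n=0$ is trivial. For the inductive step, the standard decomposition $\mathrm{Aut}(T_{n,k})=\mathrm{Aut}(T_{n-1,k})\wr\mathrm{Sym}(\Z/2^k\Z)$ restricts to an embedding $Q_{n,k}\hookrightarrow Q_{n-1,k}\wr(\Z/2^k\Z)$: the top-level factor is the cyclic subgroup generated by the image of $a$, a subgroup of $\Z/2^k\Z$; and each subtree action at a level-$1$ vertex $m$ factors through $Q_{n-1,k}$, because the $m$-th component of $\tilde\phi_\omega(g)$ is itself an element of $\tilde G_\omega$ acting on the depth-$(n-1)$ tree below $m$. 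A wreath product of $2$-groups is a $2$-group.

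Finally I would check that $\bigcap_{n,k}\ker(\tilde G_\omega\to Q_{n,k})=1$. Given $g\ne 1$, the faithfulness of the action on the $\Z$-arity tree provides $n$ and a vertex $v=(m_1,\dots,m_n)\in\Z^n$ with $vg=(m_1',\dots,m_n')\ne v$ (the edge case where $g$ stabilizes every vertex at level $n$ but moves some child non-trivially is handled by passing from $n$ to $n+1$). For some coordinate $i$ we then have $m_i\ne m_i'$ in $\Z$; choosing $k$ large enough that $m_i\not\equiv m_i'\pmod{2^k}$ shows that $g$ acts non-trivially on $T_{n,k}$. The main delicate point is the first step, where the $2$-periodicity of the sequences $\pair{\dots,\omega_0(x),x,\dots}$ is exactly what makes the recursion compatible with reduction modulo $2^k$ and explains why the quotient must be by a power of $2$; the rest is routine induction and counting.
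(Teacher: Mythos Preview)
Your argument is correct and takes a genuinely different route from the paper's. The paper does not touch the action on the $\Z$-arity tree at all: it works through the epimorphism $\xi_\omega\colon\tilde G_\omega\to G_\omega$, combines the binary-tree quotients $\pi_n\colon G_\omega\to\sym_{2^n}$ with the normal closure $N_n$ of the $2^n$-th powers of the generators, and then invokes the imported structure result $\ker\xi_\omega\cong\Z\times\sum_X\Z^3$ to see that $\bigcap_n P_n$ is a $2$-divisible subgroup of this abelian group, hence trivial. Your approach is more self-contained --- it uses nothing beyond the definition of the action and the $2$-periodicity of the recursion --- whereas the paper's approach ties the lemma to the abelian-kernel description, which is needed elsewhere anyway.

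One small imprecision worth flagging: in your inductive step you write $Q_{n,k}\hookrightarrow Q_{n-1,k}\wr(\Z/2^k\Z)$, but the subtree components of $\tilde\phi_\omega(g)$ act via the shifted sequence $s\omega$, so strictly speaking the base of the wreath product is the image of $\tilde G_{s\omega}$, not of $\tilde G_\omega$. This is harmless --- either run the induction on $n$ uniformly over all $\omega$, or bypass the induction entirely by observing that $Q_{n,k}$ lands inside the $n$-fold iterated wreath product of copies of $\Z/2^k\Z$, a finite $2$-group independent of $\omega$. Your equivariance check in step~(3) (that reduction modulo $2^k$ intertwines the two actions) is exactly what is needed and is the place where the $2$-periodicity hypothesis does the real work.
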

\begin{proof}
  For all $n\in\N$, let $\pi_n$ denote the homomorphism
  $G_\omega\to\sym_{2^n}$ defined by restricting the action of
  $G_\omega$ to sequences in $\{1,2\}^n$, and let $N_n$ be the normal
  closure in $\tilde G_\omega$ of $\langle
  a^{2^n},b^{2^n},c^{2^n},d^{2^n}\rangle$.

  Consider then the normal subgroups
  $P_n=N_n\ker(\pi_n\circ\xi_\omega)$ of $\tilde G_\omega$. On the one
  hand, $\tilde G_\omega/P_n$ is a finite $2$-group: it is a torsion
  abelian $2$-extension of the finite $2$-group $\pi_n(G_\omega)$. On
  the other hand, $P_\infty:=\bigcap_{n\in\N}P_n$ is trivial:
  $P_\infty$ is contained in $\ker\xi_\omega\cong\Z\times\sum_X\Z^3$,
  and is $2$-divisible in $\tilde G_\omega$.
\end{proof}

\subsection{Presentations}\label{ss:presentations}
Assume that $\omega$ is a recursive sequence; that is, there is an
algorithm that, given $n\in\N$ as input, computes the $n$th letter of
$\omega$. We then claim that $G_\omega$ and $\tilde G_\omega$ have
solvable word problem; that is, there are algorithms that, given a
word $w\in\{a,b,c,d\}^*$ as input, compute whether $w=1$ in
$G_\omega$ and whether $w=1$ in $\tilde G_\omega$.

Indeed, the following algorithm solves the word problem in
$G_\omega$. Given $w\in\{a,b,c,d\}^*$, first perform all elementary
cancellations $a^2\to1,b^2\to1,\dots,bc\to d,\dots$. If $w$ became the
empty word, then $w=1$ in $G_\omega$. Otherwise, if the exponent sum
of $a$ in $w$ is non-zero, then $w\neq1$ in $G_\omega$. Otherwise,
compute $\omega_0$, and using it compute
$\phi_0(w)=\pair{w_1,w_2}$. Noting that $s\omega$ is again a recursive
sequence, apply the algorithm recursively to $w_1$ and $w_2$, to
determine whether both are trivial in $G_{s\omega}$.

The algorithm terminates because $G_\omega$ is contracting.

A small modification of that algorithm solves the word problem in
$\tilde G_\omega$. Given
$w\in\{a^{\pm1},b^{\pm1},c^{\pm1},d^{\pm1}\}^*$, first perform all
elementary substitutions $cb\to bc,dc\to cd,db\to bd$ and free
cancellations. If $w$ became the empty word, then $w=1$ in $\tilde
G_\omega$. Otherwise, if the exponent sum of $a$ in $w$ is non-zero,
or if $w\in\langle b,c,d\rangle$, then $w\neq1$ in $\tilde
G_\omega$. Otherwise, compute $\omega_0$, and using it compute
$\tilde\phi_0(w)=\pair{w_1,w_2}$. Noting that $s\omega$ is again a
recursive sequence, apply the algorithm recursively to $w_1$ and
$w_2$, to determine whether both are trivial in $\tilde G_{s\omega}$.

\begin{corollary}
  If $\omega$ is recursive, then $G_\omega$ and $\tilde G_\omega$ have
  solvable word problem, and are recursively presented.
\end{corollary}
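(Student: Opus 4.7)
The plan is to formalize the two algorithms already sketched in the paragraphs preceding the corollary, verify that they terminate, and then apply the standard observation that a finitely generated group with decidable word problem admits a recursive presentation on its finite generating set. Most of the work is already in the text, so the proposal is largely organizational.

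First, for $G_\omega$, the recursive step computes $\phi_0(w)=\pair{w_1,w_2}$ and calls the algorithm on $(w_1,s\omega)$ and $(w_2,s\omega)$. Since $\omega$ is recursive, the letter $\omega_0$ and hence $\phi_0$ are computable from the input, and the shifted sequence $s\omega$ is again recursive; so the recursion is effectively implementable. Termination is precisely the contraction property of $G_\omega$: some fixed iterate of $\phi$ shrinks word length by a factor $c<1$ up to an additive constant, so the recursion tree has depth $O(\log|w|)$ and its leaves are bounded-length words, on which triviality can be decided by inspection (after the elementary reductions in $\langle b,c,d\rangle\cong(\Z/2)^2$ and a check that the $a$-exponent sum is zero).

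For $\tilde G_\omega$ the same scheme applies, using $\tilde\phi_0$ in place of $\phi_0$ together with the two-periodic state and its $\Z$-shift. The base-case nontriviality check now uses an abelian normal form in $\langle b,c,d\rangle\cong\Z^3$. The one new ingredient needed is a contraction property for $\tilde G_\omega$; this follows from contraction for $G_\omega$ via the epimorphism $\xi_\omega\colon\tilde G_\omega\twoheadrightarrow G_\omega$, whose kernel is abelian, isomorphic to $\Z\times\sum_X\Z^3$ (cited in~\S\ref{ss:torsionfree}). A short bookkeeping argument, analogous to the growth comparison $\tilde G_{\overline\zot}\sim\Z^3\wr G_{\overline\zot}$ recalled above, transfers the contraction constant across $\xi_\omega$.

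Finally, once the word problem in either $H\in\{G_\omega,\tilde G_\omega\}$ is decidable, the set $R$ of words in $\{a^{\pm1},b^{\pm1},c^{\pm1},d^{\pm1}\}^*$ representing the identity is a recursive subset of the free monoid; taking $R$ itself as the set of defining relators yields a recursive presentation on the finite generating set $\{a,b,c,d\}$. The main difficulty I anticipate is a clean bookkeeping of the contraction estimate in the $\tilde G_\omega$ case, where $\tilde\phi_\omega$ lands not in a wreath product over $\sym_2$ but in a semidirect product $\tilde G_{s\omega}^2\rtimes\Z$ via 2-periodic functions, so one must carefully separate the translation coordinate from the two periodic branches before invoking contraction.
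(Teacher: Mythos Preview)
Your overall plan is the same as the paper's: the two algorithms described just before the corollary solve the word problem, and then taking as relators all words that evaluate to the identity gives a recursive presentation. The $G_\omega$ case and the final step are fine.

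The gap is in your termination argument for $\tilde G_\omega$. You propose to transfer a contraction inequality from $G_\omega$ to $\tilde G_\omega$ through $\xi_\omega$, but no such inequality holds: if $\omega_0\neq\mathbf0$, the word $w=b^nab^{-n}a^{-1}$ is already reduced of length $2n+2$, while $\tilde\phi_0(w)=\pair{w_1,w_2}$ with $|w_1|=|w_2|=2n$, so $|w_i|/|w|\to1$ and there is no $\lambda<1$. Contraction in $G_\omega$ only bounds $\|\xi_{s\omega}(w_i)\|$, which says nothing about $\|w_i\|$ since $\ker\xi_{s\omega}$ is infinite; the growth equivalence $\tilde G_\omega\sim\Z^3\wr_X G_\omega$ is a statement about volume, not a Lipschitz constant for $\tilde\phi$. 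Termination of the $\tilde G_\omega$ algorithm follows from a cruder, direct count: in the recursive branch the reduced $w$ has $a$-exponent sum zero and lies outside $\langle b,c,d\rangle$, hence contains at least two letters $a^{\pm1}$; under $\tilde\phi_0$ each $a^{\pm1}$ contributes nothing to $w_1,w_2$, while each letter from $\{b^{\pm1},c^{\pm1},d^{\pm1}\}$ contributes at most one letter to each of them. Therefore $|w_1|,|w_2|\le|w|-2$, so the recursion has finite (linear, not logarithmic) depth.
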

\begin{proof}
  The solution of the word problem is given by the algorithms
  above. The groups are then recursively presented, by taking as
  relators, e.g., all words that are trivial in them.
\end{proof}
More efficient recursive presentations are described
in~\cite{sidki:presentation}.

Note that (unless $\omega$ is eventually constant) the group
$G_\omega$ is not finitely presented. In fact, no finitely presented
example of branched group with injective $\phi$ is known.

On the other hand, contracting groups may be described by a finite
amount of data, and this immediately implies that there are countably
many such groups.  Let $G$ be a contracting group, with
self-similarity map $\phi:G\hookrightarrow G\wr\sym_d$. Fix the
notation $\phi(g)=\pair{g_1,\dots,g_d}\pi_g$. The contracting property
implies that there are constants $\lambda<1$ and $C$ such that
$\|g_i\|\le\lambda\|g\|+C$.

Fix a generating set $S$ for $G$. The map $\phi$ is determined by its
values on generators, namely by $\#S$ permutations in $\sym_d$ and
$d\#S$ words in the free group on $S$. The corresponding map
$\phi:F_S\to F_S\wr\sym_d$ is not contracting, but there exists a
finitely presented quotient $\widehat G$ of $F_S$, with
$F_S\twoheadrightarrow\widehat G\twoheadrightarrow G$, for which the
induced map $\phi:\widehat G\to\widehat G\wr\sym_d$ is contracting
(but not injective).

The contraction property implies the following: for any $g\in \widehat
G$, sufficiently many applications of $\phi$ and projection to a
\coordinate\ $g_i$ result in elements of norm at most
$B:=(C+1)/(1-\lambda)$. There are finitely many elements in the ball
of radius $B$ in $\widehat G$; let us fix which of these elements
project to the identity in $G$.

We claim that these data --- the map $\phi:S\to F_S\wr\sym_d$, the
finitely presented group $\widehat G$, and the set $W$ of words of
length less than $B$ in $G$ projecting to the identity in $G$ ---
determine $G$ completely. To see that, it is sufficient to exhibit an
algorithm solving the word problem in $G$.

Given $g\in G$, written as a word in $S$, do the following. If
$\|g\|<B$, then $g=1$ if and only if it belongs to $W$. Otherwise,
compute $\phi(g)=\pair{g_1,\dots,g_d}\pi_g$. If $\pi_g\neq1$, then
$g\neq1$. Otherwise, apply recursively the algorithm to determine
whether $g_1,\dots,g_d$ are all trivial in $G$. This is well-founded
because (by the contraction property) we have $\|g_i\|\le \|g\|-1$.

\section{Metrics on the Grigorchuk groups $G_\omega$}\label{ss:metrics}
Let $G_\omega$ be a Grigorchuk group as above; it is the first entry
of a self-similar group sequence with $G_i=G_{s^i\omega}$.  We put a
word metric on each $G_i$, i.e.\ a metric $\|\cdot\|_i$ defined by
assigning a norm to each generator and extending it naturally to
group elements. This metric is determined by a point $V_i$ in the open
simplex
\[\Delta=\big\{(\beta,\gamma,\delta)\mid\max\{\beta,\gamma,\delta\}<\frac12,\beta+\gamma+\delta=1\big\}\]
with vertices $(\frac12,\frac12,0)$, $(\frac12,0,\frac12)$,
$(0,\frac12,\frac12)$.  It assigns norm
$\alpha:=1-2\max\{\beta,\gamma,\delta\}$ to $a$, and norms
$\beta-\alpha,\gamma-\alpha,\delta-\alpha$ to $b,c,d$ respectively.

In particular, the norm of the longest of $b,c,d$ equals the sum of
the norms of the two shortest.

In order to appropriately construct metrics on the $G_i$, we consider
the space $\Delta\times\Omega$, and define three $3\times3$
matrices $M_x$, for $x\in\{\mathbf0,\mathbf1,\mathbf2\}$, by
\begin{equation}\label{eq:matrices}
  M_{\mathbf0}=\begin{pmatrix}1&1&1\\0&2&0\\0&0&2\end{pmatrix},\\
  M_{\mathbf1}=\begin{pmatrix}2&0&0\\1&1&1\\0&0&2\end{pmatrix},\\
  M_{\mathbf2}=\begin{pmatrix}2&0&0\\0&2&0\\1&1&1\end{pmatrix}.
\end{equation}
Define a continuous function
$\eta\colon\Delta\times\{\mathbf0,\mathbf1,\mathbf2\}\to(2,3]$ by the
condition $\eta(p,x)^{-1}M_x(p)\in\Delta$, and projective maps
$\overline M_x\colon\Delta\to\Delta$ by the $\overline
M_x(p,x)=\eta(p,x)^{-1}M_x(p)$. Define next a shift map
$s\colon\Delta\times\Omega\to\Delta\times\Omega$ by
\[s(p,\omega_0\omega_1\dots)=(\overline
M_{\omega_0}(p),\omega_1\omega_2\dots).
\]
Define finally a continuous function $\mu\colon\Delta\to(0,\frac13]$
by
\[\mu(\beta,\gamma,\delta)=\min\{\beta,\gamma,\delta\}.\]
The function $\mu(p)$ is equivalent to the minimal euclidean distance
from $p\in\Delta$ to a vertex of $\Delta$.

Note that the $\overline M_x$ have disjoint images, and that the union
of their images is dense in $\Delta$. More precisely, each $\overline
M_x$ maps $\Delta$ to the simplex spanned by two vertices and the
barycentre of $\Delta$. The shift map
$s\colon\Delta\times\Omega\to\Delta\times\Omega$ is injective with dense
image. Define $\Delta'\subset\Delta$ by
\[\Omega^+=\bigcap_{k\ge0}s^k(\Delta\times\Omega)=\Delta'\times\Omega.\]
Then $\Delta'$ is dense in $\Delta$, being the complement of countably
many line segments, and $s\colon\Omega^+\to\Omega^+$ is bijective. The
metric on $G_i$ determined by $p\in\Delta'$ is non-degenerate.

Endow $\Delta$ with the Hilbert metric
$d_\Delta(V_1,V_2)=\log(V_1,V_2;V_-,V_+)$, computed using the
cross-ratio of the points $V_1,V_2$ and the intersections $V_-,V_+$ of
the line containing $V_1,V_2$ with the boundary of $\Delta$. Endow
$\Omega$ with the Bernoulli metric
$d(\omega,\omega')=\sum_{i\colon\omega_i\neq\omega'_i}2^{-i}$.

\begin{lemma}[Essentially~\cite{birkhoff:jentzsch}; we have not been able to locate a proof of the following extension]\label{lem:birkhoff}
  Let $K$ be a convex subset of a real vector space, and let $A:K\to
  K$ be a projective map. Then $A$ is $1$-Lipschitz for the Hilbert metric.

  If furthermore $A(K)$ contains no lines from $K$ (that is, $A(K)\cap
  \ell\neq K\cap \ell$ for every line $\ell$ intersecting $K$), then
  $A$ is strictly contracting.
\end{lemma}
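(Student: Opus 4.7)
The plan is to exploit the fact that a projective map sends lines to lines and preserves cross-ratios. Given $V_1,V_2\in K$, I would first reduce to the case $A(V_1)\ne A(V_2)$ (the inequality being trivial otherwise). Let $\ell_0$ be the line through $V_1,V_2$ with $\ell_0\cap K=(V_-,V_+)$, and let $\ell$ be the line through $A(V_1),A(V_2)$. Since $A$ is projective, $A(\ell_0\cap K)=\ell\cap A(K)$ is a sub-segment with endpoints $A(V_-),A(V_+)$, and the cross-ratio is preserved:
\[
[A(V_1),A(V_2);A(V_-),A(V_+)]=[V_1,V_2;V_-,V_+].
\]

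Next I would compare this with the Hilbert distance $d_\Delta(A(V_1),A(V_2))$, which is measured using the endpoints $W_-,W_+$ of $\ell\cap K$ instead of $A(V_\pm)$. The hypothesis $A(K)\subseteq K$ forces $W_\pm$ to lie on $\ell$ weakly beyond the points $A(V_\pm)$, i.e.\ the configuration on $\ell$ is $W_-,A(V_-),A(V_1),A(V_2),A(V_+),W_+$ in this order. The key routine step is then the \emph{cross-ratio monotonicity lemma}: if $p,x,y,q$ lie on a line in this order, then $[x,y;p,q]$ is nonincreasing in $|xp|$ and $|yq|$, strictly so if $|xy|>0$ and the point is moved a positive amount. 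This reduces to the one-line algebraic identity
\[
\frac{[x,y;p',q]}{[x,y;p,q]}=\frac{a(a+b+\varepsilon)}{(a+b)(a+\varepsilon)}=1-\frac{b\varepsilon}{(a+b)(a+\varepsilon)},
\]
where $a=|xp|$, $b=|xy|$, $\varepsilon=|pp'|$. Two applications (one at each end) then give $d_\Delta(A(V_1),A(V_2))\le d_\Delta(V_1,V_2)$, proving the first assertion.

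For the second assertion, the hypothesis that $A(K)\cap\ell\neq K\cap\ell$ applied to the line $\ell$ through $A(V_1),A(V_2)$ ensures that the inclusion of $[A(V_-),A(V_+)]$ into $[W_-,W_+]$ is strict on at least one side. The strict form of the monotonicity lemma above then yields $d_\Delta(A(V_1),A(V_2))<d_\Delta(V_1,V_2)$. I expect the only real work to be this monotonicity statement, and a minor bookkeeping point — verifying that $\ell\cap K$ is a genuine bounded segment, so that the cross-ratio is well defined — which follows from $A(V_1),A(V_2)$ lying in the interior of $K$ via $A(K)\subseteq K$. No uniform contraction constant is claimed, which is fortunate since none is available without an additional compactness or bounded-diameter hypothesis on $A(K)$ as in the classical Birkhoff theorem.
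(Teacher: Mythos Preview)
Your argument is correct and follows essentially the same route as the paper's proof: cross-ratio preservation gives $d_{A(K)}(A(V_1),A(V_2))=d_K(V_1,V_2)$, and then the inclusion $A(K)\subseteq K$ pushes the endpoints of $\ell\cap K$ outward relative to $\ell\cap A(K)$, so the cross-ratio monotonicity you prove yields the (strict) inequality. The paper states the monotonicity step in one phrase (``the Hilbert metric decreases as $V_\pm$ are moved apart from $V_1,V_2$''), whereas you supply the explicit algebraic identity; otherwise the two arguments are the same.
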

\begin{proof}
  Since $A$ is projective, it preserves the cross-ratio on lines, so
  we have $d_{A(K)}(A(V_1),A(V_2))=d_K(V_1,V_2)$ for all $V_1,V_2\in
  K$. Furthermore, on the line $\ell$ through $V_1,V_2$, the
  intersection points $\ell\cap\partial A(K)$ are not further from
  $V_1,V_2$ than $\{V_+,V_-\}=\ell\cap\partial K$; the Hilbert metric
  decreases as $V_\pm$ are moved apart from $V_1,V_2$, and this gives
  strict contraction under the condition $A(K)\cap \ell\neq K\cap
  \ell$.
\end{proof}

\begin{lemma}
  The homeomorphism $s\colon\Omega^+\to\Omega^+$ is `hyperbolic': it
  decomposes as the product of an expanding and contracting map.
\end{lemma}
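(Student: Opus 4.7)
The plan is to exploit the product decomposition $\Omega^+=\Delta'\times\Omega$ established above, and to verify the two halves of hyperbolicity on the two factors separately, using Lemma~\ref{lem:birkhoff} on $\Delta'$ and the doubling property of the Bernoulli metric on $\Omega$. One equips $\Omega^+$ with the product of the Hilbert metric $d_\Delta$ on $\Delta'$ and the Bernoulli metric $d$ on $\Omega$, and interprets the ``decomposition as the product of an expanding and contracting map'' in the sense of a skew product whose base is expanding and whose fibres are uniformly contracted.

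\emph{Contracting direction.} I would fix $\omega\in\Omega$ and compare $s(p,\omega)$ and $s(p',\omega)$ for $p,p'\in\Delta'$: the $\Omega$-coordinates coincide and the $\Delta$-coordinates are $\overline M_{\omega_0}(p)$ and $\overline M_{\omega_0}(p')$. Each of the three maps $\overline M_{\mathbf0},\overline M_{\mathbf1},\overline M_{\mathbf2}$ sends $\Delta$ into a proper subsimplex, which by inspection of~\eqref{eq:matrices} contains no full line segment of $\Delta$ (the image is the triangle spanned by two vertices of $\Delta$ and its barycentre, cut off from the opposite edge). Lemma~\ref{lem:birkhoff} then gives strict contraction in the Hilbert metric. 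Since there are only three maps, and since the relevant points of $\Delta'$ stay away from the vertices of $\Delta$ under iteration (they lie in the nested intersection $\bigcap_k \overline M_{\tau_k}\cdots\overline M_{\tau_1}(\Delta)$), one obtains a uniform contraction constant $\lambda<1$ with $d_\Delta(\overline M_x(p),\overline M_x(p'))\le\lambda\,d_\Delta(p,p')$ for all three letters~$x$.

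\emph{Expanding direction.} The shift $s$ acts on the $\Omega$-factor by erasing the first letter, and the Bernoulli metric then satisfies $d(s\omega,s\omega')=2\,d(\omega,\omega')$ whenever $\omega_0=\omega'_0$. Thus on the ``local'' unstable leaves through $(p,\omega)$, $s$ doubles distances. Inverting, $s^{-1}$ is a strict contraction on the $\Omega$-factor (it prepends a letter and halves distances) and, by the disjointness of the images $\overline M_x(\Delta)$, the letter prepended is determined uniquely by the first coordinate. This exhibits $s^{-1}$ as contracting along $\Omega$ and expanding along $\Delta'$ (its $\Delta'$-action is $\overline M_x^{-1}$, whose strict expansion in the Hilbert metric is dual to Lemma~\ref{lem:birkhoff}).

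\emph{Hyperbolic splitting.} Putting the two halves together, $\Omega^+$ acquires a continuous decomposition into ``stable'' leaves $\Delta'\times\{\omega\}$, on which $s$ is a $\lambda$-contraction for the Hilbert metric, and ``unstable'' leaves $\{p\}\times\Omega$, on which $s$ is a $2$-expansion for the Bernoulli metric. This is precisely the product-of-expanding-and-contracting structure claimed by the lemma. The main obstacle I expect is the non-uniform contraction of the individual $\overline M_x$ near their boundary fixed points; this is handled by restricting to $\Omega^+$, since for any $(p,\omega)\in\Omega^+$ the forward orbit stays in a compact region of $\Delta'$ bounded away from these fixed points, and by choosing the contraction constant $\lambda$ relative to this compact region rather than all of $\Delta$.
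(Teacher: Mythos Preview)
Your overall approach matches the paper's: strict contraction on the $\Delta$-factor via Lemma~\ref{lem:birkhoff}, and expansion on the $\Omega$-factor because the shift doubles the Bernoulli metric. That is exactly what the paper records, and it is all the lemma claims.

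The gap is your assertion of a \emph{uniform} contraction constant $\lambda<1$. The paper explicitly notes the opposite: ``the contraction is strict, but not uniform.'' Each image $\overline M_x(\Delta)$ shares two of its three vertices with $\Delta$, and along chords of $\Delta$ approaching such a shared vertex the Hilbert-metric contraction ratio of $\overline M_x$ tends to~$1$. Restricting to $\Delta'$ cannot repair this, since $\Delta'$ is dense in $\Delta$.

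Your proposed rescue in the last paragraph---that for $(p,\omega)\in\Omega^+$ the forward orbit stays in a compact region of $\Delta'$---is also false. Membership in $\Omega^+=\Delta'\times\Omega$ constrains only the \emph{past} of the orbit (existence of arbitrary preimages), not the future. Concretely, take $\omega=(\mathbf0\mathbf1)^\infty$: both $\overline M_{\mathbf0}$ and $\overline M_{\mathbf1}$ fix the vertex $(\tfrac12,\tfrac12,0)$, and the iterates $V_k$ converge to it (compare Lemma~\ref{lem:mu_k}); near that vertex the contraction of both maps degenerates.

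None of this damages the lemma itself. The scare quotes around `hyperbolic' signal precisely that only a product of a strictly contracting and an expanding map is meant, not uniform hyperbolicity. Drop the uniformity claim and your argument is the paper's.
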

\begin{proof} The expanding direction is $\Omega$; indeed the
  Bernoulli metric is doubled by the shift map. The contracting
  direction is $\Delta$; indeed the maps $\overline M_x$ are
  projective, hence contracting by Lemma~\ref{lem:birkhoff}. Note that
  the contraction is strict, but not uniform.
\end{proof}

Let $V_0\in\Delta'$ be a given metric on $G_\omega$; this defines a
point $(V_0,\omega)\in\Omega^+$. We then let $V_i$ be the first
\coordinate\ of $s^i(V_0,\omega)$, and have defined a metric
$\|\cdot\|_i$ on each $G_i$. We also set $\eta_i=\eta(V_i,\omega_i)$
and $\mu_i=\mu(V_i)$, to lighten notation.

(Note that the choice of $V_0\in\Delta'$ will ultimately not be very
important, because the maps $\overline M_x$ are contracting. Indeed,
for two choices $V_0,V_0'$ we have $V_i\to V_i'$, so $\mu_i\to\mu_i'$
and $\eta_i\to\eta_i'$ as $i\to\infty$.)

\begin{lemma}\label{lem:contraction}
  For each $i\ge0$ and each $g\in G_i$, with
  $\phi_i(g)=\pair{g_1,g_2}\pi$, we have
  \[\|g_1\|_{i+1}+\|g_2\|_{i+1}\le\frac2{\eta_i}(\|g\|_i+\|a\|_i).\]
\end{lemma}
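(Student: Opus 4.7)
The plan is to expand a minimum-norm word for $g$ through $\phi_i$, count each letter's contribution to the two wreath-product coordinates, and then exploit the normalization identity $\eta_i = 3 - 2 V_{i, x_i}$ that defines the projective dynamics. Since $a, b, c, d$ are involutions with $bc = d$, $cd = b$, $db = c$ and the norms satisfy $\|d\|_i \le \|b\|_i + \|c\|_i$ (and cyclic permutations), any word representing $g$ can be rewritten, without increasing its norm, in the alternating form $w = x_0\,a\,x_1\,a\cdots a\,x_k$ with $x_j \in \{1, b, c, d\}$ and $\|w\|_i = \|g\|_i$. Applying $\phi_i$ yields $\pair{g_1, g_2}\pi$; each $a$-letter merely swaps coordinates, while $\phi_i(x_j) = \pair{\omega_i(x_j), x_j}$ sends a copy of $x_j$ to one coordinate and a copy of $\omega_i(x_j) \in \{1, a\}$ to the other. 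Writing $x_i \in \{b,c,d\}$ for the generator singled out by $\omega_i(x_i) = 1$ and $n_s$ for the number of $j$'s with $x_j = s$, the triangle inequality gives
\[\|g_1\|_{i+1} + \|g_2\|_{i+1} \;\le\; n_{x_i}\|x_i\|_{i+1} + \sum_{s \in \{b,c,d\}\setminus\{x_i\}} n_s\bigl(\|s\|_{i+1} + \|a\|_{i+1}\bigr).\]

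Next I would translate everything to level-$i$ data. Because $\|s\|_{i+1} + \|a\|_{i+1} = V_{i+1, s}$ for $s \in \{b,c,d\}$, and $V_{i+1} = \eta_i^{-1} M_{\omega_i}(V_i)$ gives $V_{i+1, x_i} = 1/\eta_i$ together with $V_{i+1, s} = 2 V_{i, s}/\eta_i$ for $s \neq x_i$, the condition that $V_{i+1}$ lies in the open simplex $\Delta$ (entries summing to $1$, maximum strictly less than $1/2$) forces
\[\alpha_{i+1} = \frac{\eta_i - 2}{\eta_i} \qquad\text{and}\qquad \eta_i = 3 - 2 V_{i, x_i}.\]

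With these identities in hand, a direct manipulation — using $\|g\|_i + \|a\|_i = (n_a + 1 - n)\alpha_i + \sum_s n_s V_{i, s}$ with $n = n_{x_i} + n_{y_i} + n_{z_i}$ — reduces the desired inequality to
\[\frac{2}{\eta_i}\bigl(\|g\|_i + \|a\|_i\bigr) - \bigl(\|g_1\|_{i+1} + \|g_2\|_{i+1}\bigr) \;\ge\; \frac{2(n_a + 1 - n)\alpha_i + n_{x_i}\bigl(2 V_{i, x_i} + \eta_i - 3\bigr)}{\eta_i}.\]
The second summand vanishes identically by the normalization $\eta_i = 3 - 2 V_{i, x_i}$, while the first is non-negative because the alternating form provides at most $k+1 = n_a + 1$ non-trivial entries $x_j$. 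The main point worth flagging is this miraculous vanishing of the $n_{x_i}$-coefficient: it is precisely what dictates the specific form of the matrices $M_x$ and hence of the entire dynamical system on $\Delta\times\Omega$, and once it is isolated the rest of the argument is routine bookkeeping.
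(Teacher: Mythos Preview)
Your argument is correct and follows essentially the same route as the paper's proof: write $g$ as an alternating word in $a$ and $\{b,c,d\}$, push it through $\phi_i$, and use the identities coming from $V_{i+1}=\eta_i^{-1}M_{\omega_i}V_i$. The paper packages the computation slightly more cleanly by recording upfront that $\|s\|_{i+1}+\|\omega_i(s)\|_{i+1}=\tfrac{2}{\eta_i}(\|s\|_i+\|a\|_i)$ holds for \emph{all three} $s\in\{b,c,d\}$ (your ``miraculous vanishing'' for $s=x_i$ together with your formula $V_{i+1,s}=2V_{i,s}/\eta_i$ for $s\neq x_i$), and then simply sums over the syllables $as$; but the content is identical.
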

\begin{proof}
  We consider first $\omega_i=\mathbf0$; the general case will
  follow. Consider $V_i=(\beta_i,\gamma_i,\delta_i)\in\Delta$, and
  $V_{i+1}=\overline
  M_{\mathbf0}(V_i)=(\beta_{i+1},\gamma_{i+1},\delta_{i+1})$. We
  compute $\eta_i=\eta(V_i,\mathbf0)=3-2\beta_i$, and
  $V_{i+1}=(1/\eta_i,2\gamma_i/\eta_i,2\delta_i/\eta_i)$, so
  \[\|b\|_{i+1}=\beta_{i+1}-\|a\|_{i+1}=\beta_{i+1}-1+2\beta_{i+1}=3/\eta_i-1=2\beta_i/\eta_i.\]
  The same result holds, cyclically permuting $\beta,\gamma,\delta$,
  for all $\overline M_x$. We therefore have
  \begin{align}
    \|b\|_{i+1}+\|\omega_i(b)\|_{i+1}&= 2\beta_i/\eta_i = 2/\eta_i(\|b\|_i+\|a\|_i),\notag\\
    \|c\|_{i+1}+\|\omega_i(c)\|_{i+1}&= 2\gamma_i/\eta_i = 2/\eta_i(\|c\|_i+\|a\|_i),\label{eq:bcd}\\
    \|d\|_{i+1}+\|\omega_i(d)\|_{i+1}&= 2\delta_i/\eta_i = 2/\eta_i(\|d\|_i+\|a\|_i).\notag
  \end{align}

  On the other hand, $g$ may by written as an alternating word in $a$
  and $b,c,d$, because of the relations $b^2=c^2=d^2=bcd=1$. Group
  each $b,c,d$-letter with an $a$, and sum the corresponding
  inequalities; there may be an $a$ left over. This gives the desired
  inequality.
\end{proof}

Even though the metrics we consider on $G_\omega$ are not word
metrics, the volume of small balls are well understood. We include the
following result for future reference, though it will not be used in
this article:
\begin{lemma}
  There are two absolute constants $K_1,K_2>0$ such that, for every
  group $G_\omega$ with metric given by $V_0\in\Delta$, we have
  \[\frac{K_1}{\mu_0}\le v(1/2)\le\frac{K_2}{\mu_0}.\]
\end{lemma}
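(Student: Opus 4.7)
By the $\sym_3$-symmetry permuting $\{b,c,d\}$, I will assume throughout that $\mu_0=\beta$, so $\|ab\|=\beta=\mu_0$ is the smallest of the three pair-norms $\|ab\|,\|ac\|,\|ad\|$. The identities $\beta+\gamma+\delta=1$ together with $\gamma,\delta<1/2$ force $\gamma,\delta\ge 1/2-\mu_0$, and hence $\|c\|,\|d\|\ge 1/2-3\mu_0$.

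For the upper bound, the plan is to exploit that $G_\omega$ is a quotient of the universal group $U=\langle a,b,c,d\mid a^2,b^2,c^2,d^2,bcd\rangle\cong\Z/2*(\Z/2)^2$, in which every element has a unique alternating reduced normal form, so $v_{G_\omega}(1/2)\le v_U(1/2)$. It suffices to treat $\mu_0<1/12$, since outside this range both claimed bounds are trivial for suitable $K_1,K_2$. For such $\mu_0$, any alternating word of norm $\le 1/2$ contains at most one letter from $\{c,d\}$: two such letters would contribute at least $2(1/2-3\mu_0)=1-6\mu_0>1/2$. The zero-$\{c,d\}$-letter subcase yields alternating words in $\{a,b\}$ of length at most $1/\mu_0+O(1)$, hence $O(1/\mu_0)$ words; the one-$\{c,d\}$-letter subcase yields words $w_1 x w_2$ with $w_i$ alternating in $\{a,b\}$ and $\|w_1\|+\|w_2\|\le 1/2-\|x\|=O(\mu_0)$, again $O(1/\mu_0)$ words. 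Summing gives $v_{G_\omega}(1/2)\le K_2/\mu_0$.

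For the lower bound, my first move is the family $\{(ab)^k:0\le k\le\lfloor 1/(2\mu_0)\rfloor\}$, each of norm $k\mu_0\le 1/2$; whenever the order $N_\omega$ of $ab$ in $G_\omega$ satisfies $N_\omega\ge 1/(2\mu_0)$ these are pairwise distinct, which already yields the bound with $K_1=1/2$. The residual case is $\omega_0=\mathbf0$, where $N_\omega=4$ is too small: then I appeal to the branched structure, namely the finite-index subgroup $K_0\le G_\omega$ with $K_1\times K_1\subseteq\phi_0(K_0)$, so each pair $(h_1,h_2)\in K_1\times K_1$ lifts via an explicit branching word to an element of $K_0$ whose norm is linearly controlled by $\|h_1\|_1+\|h_2\|_1$. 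This converts the required count for $G_\omega$ at scale $1/2$ into one for $G_{s\omega}$ at a positive radius determined by $\eta_0$ and $\|a\|_0$ (via Lemma~\ref{lem:contraction} read in reverse for the explicit branching construction); iterating along the shift orbit $\omega,s\omega,s^2\omega,\dots$ eventually reaches a letter $\omega_i\ne\mathbf0$, where the primary family applies and finishes the bound.

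The hard part will be ensuring that $K_1$ is absolute, independent of $\omega$ and $V_0$: the branching lift in the degenerate case incurs losses both from the contraction factors $\eta_i\in(2,3)$ and from the indices $[G_i:K_i]$ at each step. The essential structural input is that $\overline M_{\mathbf0}$ maps $\Delta$ into the sub-simplex $\{\beta\ge 1/3\}$, so the regime $\mu_0=\beta$ small cannot persist across shifts; only boundedly many iterations are needed before the primary family takes over, and composing the uniform per-step losses then delivers the uniform $K_1$.
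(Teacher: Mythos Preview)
Your upper-bound argument is correct and coincides with the paper's (terse) proof: a reduced alternating word of norm $\le 1/2$ can contain at most one letter from $\{c,d\}$, since two would contribute at least $\beta+\gamma>1/2$, leaving $O(1/\mu_0)$ possibilities.

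The lower bound, however, has a genuine gap --- and in fact $v(1/2)\ge K_1/\mu_0$ is \emph{false} in the stated generality, so no argument can close it. Your branching reduction cannot manufacture $\asymp 1/\mu_0$ distinct elements: you correctly observe that when $\omega_0=\mathbf 0$ and $\mu_0=\beta$, applying $\overline M_{\mathbf 0}$ lands in a region where $\mu_1$ is bounded below by an absolute constant (indeed $\mu_1>(1-2\beta)/(3-2\beta)$). But this means the relevant ball in $(G_{s\omega},\|\cdot\|_1)$ contains only $O(1)$ elements, and lifting pairs from $K_1\times K_1$ therefore yields only $O(1)$ elements of $G_\omega$, not $1/\mu_0$. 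Your dichotomy is also mis-stated: the residual case is $N_\omega<1/(2\mu_0)$, and since $N_\omega=2^{k+2}$ with $k$ the position of the first $\mathbf 0$ in $\omega$, this is not the same as $\omega_0=\mathbf 0$. For a concrete counterexample to the lower bound itself, take any $\omega$ with $\omega_0=\mathbf 0$ (so $ab$ has order $\le 4$ in $G_\omega$) and $V_0$ with $\mu_0=\beta$ small and $\gamma,\delta\in(1/2-\beta,1/2)$: your own upper-bound analysis shows every element of norm $\le 1/2$ either lies in the dihedral group $\langle a,b\rangle$ of order at most $8$, or contains a single $c$ or $d$ with only $O(1)$ accompanying $ab$-syllables; hence $v(1/2)=O(1)$ while $1/\mu_0\to\infty$. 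The paper's own proof treats only the upper bound, and the lemma is explicitly flagged there as unused.
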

\begin{proof}
  Assume without loss of generality $\beta\le\gamma\le\delta$.  A word
  of norm $\le1$ and length $\ge4$ may contain only $a$s and $b$s,
  because otherwise it would have norm at least $\beta+\gamma>1$. It
  must therefore be of the form $b^i(ab)^ja^k$ for $i,k\in\{0,1\}$ and
  $0\le j\le\beta^{-1}$.
\end{proof}

\begin{lemma}\label{lem:growth0}
  For every $A>0$ there exists a constant $K$, such that, for all
  $V_0\in\Delta'$, we have $v(A\mu_0)\le K$.
\end{lemma}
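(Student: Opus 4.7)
The plan is to show that any element $g\in G_\omega$ with $\|g\|_0\le A\mu_0$ is represented by a word in $\{a,b,c,d\}^*$ of length at most $2A+1$, after which the number of such words depends only on $A$.

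The key observation is that the scale normalization forces $\|a\|_0+\|x\|_0$ to equal the corresponding coordinate of $V_0$ for each $x\in\{b,c,d\}$, and hence to be at least $\mu_0$. Given $g$ as above, I would pick a representative word $w\in\{a,b,c,d\}^*$ with $\|w\|_0=\|g\|_0$ (this infimum is achieved, because only finitely many words fall below any given norm threshold), chosen moreover of minimal length among such minimizers. Using the defining relations $a^2=b^2=c^2=d^2=1$ and $bc=d$, $cd=b$, $db=c$, all of which are norm-nonincreasing---a direct consequence of the paper's normalization that the longest of $b,c,d$ has norm equal to the sum of the norms of the other two---such a $w$ may be assumed to alternate between $a$ and letters in $\{b,c,d\}$.

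Writing $\ell$ for the length of $w$, one then groups its letters into $\lfloor\ell/2\rfloor$ disjoint adjacent pairs, each of the form $ax$ or $xa$ with $x\in\{b,c,d\}$ and therefore of norm at least $\mu_0$. This forces $A\mu_0\ge\|w\|_0\ge\lfloor\ell/2\rfloor\mu_0$, and hence $\ell\le 2A+1$. Since there are at most $4^{2A+1}$ words of length at most $2A+1$ in the alphabet $\{a,b,c,d\}$, the constant $K=4^{2A+1}$ works uniformly over $V_0\in\Delta'$.

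I anticipate no real obstacle; the only subtlety is justifying that a minimum-norm representative may be taken alternating, which reduces at once to the two norm-nonincreasing reductions noted above. The substance of the statement is simply the metric normalization: every ``$ax$ step'' costs at least the smallest coordinate $\mu_0$, so a ball of radius $A\mu_0$ can contain only words of length $O(A)$, regardless of the choice of $V_0$.
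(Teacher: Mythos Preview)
Your proof is correct and is exactly the paper's (one-sentence) argument spelled out: the paper just notes that the minimal norm of a non-trivial product of two generators is $\mu_0$, whence $K=4^{2A}$, which is precisely your pairing of consecutive letters into blocks $ax$ of norm $\ge\mu_0$. The only quibble is that your parenthetical ``only finitely many words fall below any given norm threshold'' can fail when some $\|x\|_0=0$ (this occurs when the two largest coordinates of $V_0$ coincide), but since the claim does hold among \emph{alternating} words---each $ax$-pair having norm $\ge\mu_0>0$---your argument goes through unchanged.
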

\begin{proof}
  The minimal norm in $(G_\omega,\|\cdot\|_{V_0})$ of a non-trivial
  product of two generators is $\mu_0$; we can therefore take
  $K=4^{2A}$.
\end{proof}

\subsection{Volume growth of \boldmath $G_\omega$, upper bound}\label{ss:growthupper}
Let $v_i(R)$ denote the volume growth of $G_i$ for the metric
$\|\cdot\|_i$. The following proposition, and its variants
in~\S\ref{ss:inverted}, are key to our computation of growth
functions.

\begin{proposition}\label{prop:gomega}
  There is an absolute constant $B$ such that for all $k\in\N$, we
  have
  \[v_0(\eta_0\eta_1\cdots\eta_{k-1}\mu_k)\le B^{2^k}.\]
\end{proposition}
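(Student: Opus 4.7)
The plan is to iterate Lemma \ref{lem:contraction} $k$ times and then bound the resulting leaf counts via Lemma \ref{lem:growth0}. Every $g\in G_0$ with $\|g\|_0\le R:=\eta_0\cdots\eta_{k-1}\mu_k$ is determined by its depth-$k$ self-similar portrait, consisting of a tree automorphism $\pi$ of the rooted binary tree of depth $k$ (at most $2^{2^k-1}$ choices) together with a tuple $(g_\sigma)_{\sigma\in\{1,2\}^k}\in G_k^{2^k}$. So $v_0(R)\le 2^{2^k-1}\cdot N_k$ where $N_k$ counts the feasible tuples, and it suffices to show $N_k\le B_0^{2^k}$ for an absolute $B_0$; taking $B:=2B_0$ then absorbs the permutation factor.

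First I control the sum of leaf norms. Iterating Lemma \ref{lem:contraction} yields the recursion $S_{i+1}\le(2/\eta_i)(S_i+2^i\|a\|_i)$ on the level-$i$ sum $S_i:=\sum_\tau\|g_\tau\|_i$, and unrolling gives
\[
  S_k\;\le\;\frac{2^k\,\|g\|_0}{\eta_0\cdots\eta_{k-1}}\;+\;2^k\sum_{i=0}^{k-1}\frac{\|a\|_i}{\eta_i\cdots\eta_{k-1}}.
\]
Substituting $\|g\|_0\le R$ makes the first term exactly $2^k\mu_k$. For the tail, $\|a\|_i=1-2\max\{\beta_i,\gamma_i,\delta_i\}\le\tfrac13$ (since $\max\{\beta_i,\gamma_i,\delta_i\}\ge\tfrac13$) and $\eta_i>2$ give $\sum_{i=0}^{k-1}\|a\|_i/(\eta_i\cdots\eta_{k-1})\le\tfrac13\sum_{i=0}^{k-1}2^{i-k}<\tfrac13$. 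Hence $S_k\le 2^k(\mu_k+\tfrac13)$.

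The core step is the bound $N_k\le B_0^{2^k}$. The naive estimate $N_k\le v_k(S_k)^{2^k}$ is far too loose, since $S_k$ may be of order $2^k$ whereas I need each leaf to cost only an absolute constant. Instead, I plan to propagate the bound $v_i(\eta_i\cdots\eta_{k-1}\mu_k)\le B_0^{2^{k-i}}$ from $i=k$ down to $i=0$ by induction on $k-i$: the base case $k-i=0$ is Lemma \ref{lem:growth0} applied with $A=1$, and the inductive step counts pairs $(g_1,g_2)\in G_{i+1}^2$ with $\|g_1\|+\|g_2\|\le 2r'+\tfrac13$ (where $r':=\eta_{i+1}\cdots\eta_{k-1}\mu_k$) by a convolution $\sum_{s_1+s_2\le 2r'+1/3}\Delta v_{i+1}(s_1)\Delta v_{i+1}(s_2)$ combined with the inductive hypothesis $v_{i+1}(r')\le B_0^{2^{k-i-1}}$ and submultiplicativity. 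The chief technical obstacle is avoiding a loss of a factor of two in the exponent: the naive estimate $v_{i+1}(2r')^2\le v_{i+1}(r')^4$ only gives $B_0^{2^{k-i+1}}$ rather than $B_0^{2^{k-i}}$, so the sum constraint must be genuinely exploited, for instance via an approximate log-concavity estimate for $v_{i+1}$ established simultaneously with the main induction.
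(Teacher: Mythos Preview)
Your outline parallels the paper's argument: both iterate Lemma~\ref{lem:contraction} and aim to propagate a bound of the form $v_i(\eta_i\cdots\eta_{k-1}\mu_k)\le B_0^{2^{k-i}}$ from $i=k$ down to $i=0$, with the base case supplied by Lemma~\ref{lem:growth0}. You also correctly isolate the crux: a naive application of submultiplicativity to the convolution loses a factor of two in the exponent, and one must genuinely exploit the sum constraint $\|g_1\|+\|g_2\|\le 2r'+O(1)$.

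However, the proposal stops exactly at that point. Your last sentence only \emph{names} the needed ingredient---``an approximate log-concavity estimate for $v_{i+1}$ established simultaneously with the main induction''---without supplying it. This is the heart of the proof, and it is not clear how you would carry an approximate log-concavity hypothesis through the induction without it degrading. The paper's device is to replace $\log v_i$ from the start by its concave majorand $\alpha_i:=(\log v_i)^+$; concavity then gives $\alpha_{i+1}(p_1)+\alpha_{i+1}(p_2)\le 2\alpha_{i+1}\bigl(\tfrac{p_1+p_2}{2}\bigr)$ for free, converting the convolution bound into a clean doubling inequality with no loss. The remaining additive terms (the $\|a\|_i$ and the logarithmic factor from the length of the convolution) are absorbed by passing to $\beta_i(R):=\alpha_i(R+\mu_i)+\log(1+R+\mu_i)+\text{const}$ and using Lemma~\ref{lem:etamu} ($\eta_k\mu_{k+1}\ge\mu_k+\|a\|_k$), which you did not invoke; your cruder bound $\|a\|_i\le\tfrac13$ leaves a constant additive shift at each step that would still need to be handled inside the log-concavity argument.

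In short: the architecture is right, but the proof is incomplete precisely at the step you flagged. The concave-majorand trick is the missing idea. Your first paragraph (the global bound $S_k\le 2^k(\mu_k+\tfrac13)$ on the sum of leaf norms) is correct but, as you yourself note, does not lead anywhere; the paper does not use it.
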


Before embarking in the proof, we give a few preliminary definitions
and results.  For a non-negative function $f(R)$, its \emph{concave
  majorand} is the smallest concave function $f^+$ greater than $f$;
the graph of $f^+$ bounds the convex hull of the graph of $f$. It may
be defined by
\begin{equation}\label{eq:majorand}
  f^+(R)=\sup_{p<R<q}\left(\frac{q-R}{q-p}f(p) + \frac{R-p}{q-p}f(q)\right).
\end{equation}

The following lemma is not used in the argument, but only included for
illustration. It implies that the growth functions considered in
Theorem~\ref{thm:main} are equivalent to log-concave functions.
\begin{lemma}\label{lem:concave}
  Let $g\colon\R\to\R$ be a function satisfying
  \begin{equation}\label{eq:g0}
    g(2R)\le 2g(R)\le g(\eta_+R)
  \end{equation}
  for all $R$ large enough. Then $g$ is equivalent to a concave
  function.
\end{lemma}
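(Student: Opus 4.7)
The plan is to take $g^+$, the concave majorand of $g$ defined by~\eqref{eq:majorand}, and to show $g^+\le 3g$ asymptotically. Since $g\le g^+$ holds by construction, this gives $g\sim g^+$, with $g^+$ concave by definition.

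First I would arrange that $g$ is non-decreasing; this is automatic in the intended applications, where $g=\log f$ for a monotone growth function $f$, and otherwise follows by replacing $g$ with its monotone envelope $\bar g(R):=\sup_{R'\le R}g(R')$. The key input is then a linear-secant bound: iterating $g(2R)\le 2g(R)$ and using monotonicity, for any $q\ge R\ge R_0$ one chooses $n$ with $2^{n-1}R<q\le 2^nR$ and obtains $g(q)\le 2^n g(R)\le (2q/R)g(R)$.

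With this in hand, the estimate on the majorand is a one-line computation. For $p<R<q$, set $\lambda=(q-R)/(q-p)$; the elementary identity $(1-\lambda)q=R-\lambda p\le R$ combines with monotonicity (giving $\lambda g(p)\le g(R)$) and the linear-secant bound (giving $(1-\lambda)g(q)\le (1-\lambda)(2q/R)g(R)\le 2g(R)$) to yield $\lambda g(p)+(1-\lambda)g(q)\le 3g(R)$. Taking the supremum in~\eqref{eq:majorand} then produces $g^+(R)\le 3g(R)$ for $R\ge R_0$.

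The argument is quite short. The main point requiring attention is the monotonicity reduction; once this is in place, the rest is a routine calculation. Observe that only the inequality $g(2R)\le 2g(R)$ from~\eqref{eq:g0} actually enters the proof---the companion inequality $2g(R)\le g(\eta_+R)$ is included in the hypothesis for consistency with the setting of Theorem~\ref{thm:main} but plays no role here.
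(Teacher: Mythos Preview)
Your argument is correct and follows the same strategy as the paper: bound the concave majorand $g^+$ by a constant multiple of $g$. The paper first replaces $g$ by an equivalent subadditive function and uses $g(Nx+c)\le Ng(x)+g(c)$ to reach $g^*\le 2g$; you instead iterate the doubling inequality directly to obtain the linear-secant bound $g(q)\le(2q/R)g(R)$, arriving at $g^+\le 3g$. Your route is marginally cleaner in that it avoids the preliminary reduction to subadditivity, at the cost of a slightly worse constant. Both arguments assume monotonicity without further comment.

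One correction to your closing remark: the pointwise bound $g^+\le 3g$ indeed uses only $g(2R)\le 2g(R)$, but the passage from $g^+\le 3g$ to $g\sim g^+$ in the paper's sense of $\sim$ (argument rescaling, not bounded ratios) requires $3g(R)\le g(CR)$ for some constant $C$. This is precisely where the second inequality $2g(R)\le g(\eta_+R)$ enters---iterating it twice gives $4g(R)\le g(\eta_+^2R)$, so $C=\eta_+^2$ suffices. The paper makes this step explicit (``this with the second inequality in~\eqref{eq:g0} yields the claim''); without it the conclusion can fail, e.g.\ for bounded $g$ the hypothesis $g(2R)\le 2g(R)$ is vacuous but $3g$ need not be dominated by any dilate of $g$.
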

\begin{proof}
  First, the inequality $g(2R)\le g(R)$ alone implies that $g$ is
  equivalent to a subadditive function, namely a function satisfying
  $g(R+S)\le g(R)+g(S)$. We therefore assume, without loss of
  generality, that $g$ is subadditive.  Let $g^*$ be the concave
  majorand of $g$, as in~\eqref{eq:majorand}. We will show $g^*\le
  2g$; this with the second inequality in~\eqref{eq:g0} yields the
  claim.

  Without loss of generality, assume also that $g$ is increasing.
  Consider $a<x<b$ such that $g^*(x)=(x-a)/(b-a) g(b) + (b-x)/(b-a)
  g(a)$. Write $b=Nx+c$ with $0\le c<x$. Then
  \begin{align*}
    (b-a) g^*(x) &= (x-a)g(Nx+c) + ((N-1)x+c) g(a)\\
    &\le (x-a)(Ng(x)+g(c)) + ((N-1)x+c)g(a)\\
    &\le (x-a)(N+1)g(x) + ((N-1)x+c)g(x)\\
    &= (2Nx+c-(N+1)a)g(x)\le 2(b-a)g(x),
  \end{align*}
  so $g^*(x)\le 2g(x)$.
\end{proof}

\begin{lemma}\label{lem:etamu}
  For all $k\in\N$ we have $\eta_k\mu_{k+1}\ge\mu_k+\|a\|_k.$
\end{lemma}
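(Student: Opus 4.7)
The plan is a direct case analysis on the explicit form of the projective map $\overline M_{\omega_k}$, using the cyclic $S_3$-symmetry of the three matrices $M_{\mathbf 0},M_{\mathbf 1},M_{\mathbf 2}$ (under simultaneous permutation of $\{\beta,\gamma,\delta\}$ and of the letters $\{\mathbf0,\mathbf1,\mathbf2\}$) to reduce to a single case. Assume $\omega_k=\mathbf0$ and write $V_k=(\beta_k,\gamma_k,\delta_k)$; then $M_{\mathbf0}V_k=(\beta_k+\gamma_k+\delta_k,2\gamma_k,2\delta_k)=(1,2\gamma_k,2\delta_k)$, so $\eta_k=3-2\beta_k$ and $V_{k+1}=(1/\eta_k,2\gamma_k/\eta_k,2\delta_k/\eta_k)$. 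Since $\gamma_k,\delta_k<\frac12$, the coordinate $1/\eta_k$ is strictly the largest of the three, and therefore
\[\eta_k\mu_{k+1}=2\min(\gamma_k,\delta_k).\]

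The claim then reduces to the numerical inequality
\[2\min(\gamma_k,\delta_k)\ge \min(\beta_k,\gamma_k,\delta_k)+1-2\max(\beta_k,\gamma_k,\delta_k),\]
which depends only on the sorted triple. Writing the sorted values of $(\beta_k,\gamma_k,\delta_k)$ as $p\le q\le r$ with $p+q+r=1$, I split into cases according to the position among $\{p,q,r\}$ of the distinguished coordinate $\beta_k$ (the one singled out by $\omega_k=\mathbf0$). If $\beta_k=p$, then $\min(\gamma_k,\delta_k)=q$, and the target $2q\ge p+(1-2r)$ rearranges via $p+q+r=1$ to $q+r\ge 2p$, which holds because $p$ is the minimum. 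If $\beta_k\in\{q,r\}$, then $\min(\gamma_k,\delta_k)=p$, and the target $2p\ge p+(1-2r)$ becomes $p+2r\ge p+q+r$, i.e.\ $r\ge q$, the sorting inequality.

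There is essentially no obstacle: the argument is an exercise in computing the image of an explicit linear map and comparing three positive reals summing to $1$. The one conceptual point worth highlighting is that $1/\eta_k$ never realises $\mu_{k+1}$, because doubling the other two coordinates and dividing by $\eta_k\in(2,3]$ keeps them below $1/\eta_k$; this is what reduces the three-way minimum to $2\min(\gamma_k,\delta_k)$ and makes the final case split so clean.
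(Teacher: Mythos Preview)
Your proof is correct and follows essentially the same approach as the paper: reduce by the $S_3$-symmetry to $\omega_k=\mathbf0$, compute $\eta_k\mu_{k+1}=2\min(\gamma_k,\delta_k)$, and verify the resulting elementary inequality by case analysis on the ordering of $\beta_k,\gamma_k,\delta_k$. Your organisation by the position of $\beta_k$ among the sorted values collapses the paper's six orderings into three (really two) cases, which is a mild streamlining but not a different idea.
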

\begin{proof}
  We write $V_k=(\beta,\gamma,\delta)$, and assume without loss of
  generality $\omega_k=\mathbf0$. Then the inequality
  \[\eta_k\mu_{k+1}=2\min\{\gamma,\delta\}\ge\min\{\beta,\gamma,\delta\}+(1-\max\{\beta,\gamma,\delta\})=\mu_k+\|a\|_k\]
  is checked by considering the six cases $\beta<\gamma<\delta$ etc. in turn.
\end{proof}

\begin{proof}[Proof of Proposition~\ref{prop:gomega}]
  We derive a relation between $v_i$ and $v_{i+1}$ from
  Lemma~\ref{lem:contraction}. The integrals $\int_{p_1+p_2=K}$ are
  really finite sums, because their arguments are piecewise constant;
  however, in these finite sums the parameters $p_1,p_2$ do not take
  integral values. We have:
  \begin{align}
    v_i(R) &\le\int_{p_1+p_2=2/\eta_i(R+\|a\|_i)\kern-3em}2v_{i+1}(p_1)v_{i+1}(p_2)\label{eq:contr}\\\
    &\le2\big(1+\frac2{\eta_i}(R+\|a\|_i)\big)\max_{p_1+p_2=2/\eta_i(R+\|a\|_i)}v_{i+1}(p_1)v_{i+1}(p_2).\notag
  \end{align}
  Set $\alpha_i(R):=(\log v_i)^+(R)$. Below, we abbreviate in `$\cdots$'
  the development of $\log v_i(q)$, which is similar to that of $\log
  v_i(p)$:
  \begin{align*}
    \alpha_i(R) &=\sup_{p<R<q}\left\{\frac{q-R}{q-p}\log v_i(p)+\frac{R-p}{q-p}\log v_i(q)\right\}\\
    &\le\sup_{p<R<q}\left\{\frac{q-R}{q-p}\log\int_{p_1+p_2=\frac2{\eta_i}(p+\|a\|_i)\kern-3em}2v_{i+1}(p_1)v_{i+1}(p_2)+\frac{R-p}{q-p}\cdots\right\}\text{ by }\eqref{eq:contr}\\
    &\le\log2+\sup_{p<R<q}\left\{\frac{q-R}{q-p}\log\int_{p_1+p_2=\frac2{\eta_i}(p+\|a\|_i)\kern-3em}\exp\big(\alpha_{i+1}(p_1)+\alpha_{i+1}(p_2)\big)+\frac{R-p}{q-p}\cdots\right\}\\
    &\begin{aligned}
      \le\log2+\sup_{p<R<q}&\left\{\frac{q-R}{q-p}\log\int_{p_1+p_2=\frac2{\eta_i}(p+\|a\|_i)}\exp\bigg(2\alpha_{i+1}\Big(\frac{p+\|a\|_i}{\eta_i}\Big)\bigg)\right.\\
      &\qquad\left.+\frac{R-p}{q-p}\cdots\right\}\text{ because $\alpha_{i+1}$ is concave}
    \end{aligned}\\
    &\le\log2+\sup_{p<R<q}\left\{\frac{q-R}{q-p}\left[\log\Big(1+\frac{2(p+\|a\|_i)}{\eta_i}\Big)+2\alpha_{i+1}\Big(\frac{p+\|a\|_i}{\eta_i}\Big)\right]+\frac{R-p}{q-p}\cdots\right\}\\
    &\le\log2+\log\Big(1+\frac{2(R+\|a\|_i)}{\eta_i}\Big)+2\alpha_{i+1}\Big(\frac{R+\|a\|_i}{\eta_i}\Big)
  \end{align*}
  because $\alpha_{i+1}$ and $\log(1+\cdots)$ are concave.

  Set then
  $\beta_i(R)=\alpha_i(R+\mu_i)+\log(1+R+\mu_i)+2\log2+2\log3$. We
  will in fact prove the stronger inequality
  $\beta_0(\eta_0\eta_1\cdots\eta_{k-1}\mu_k)\le 2^kB'$ for an
  absolute constant $B'$.  We have
  \begin{align*}
    \beta_i(R) &=\alpha_i(R+\mu_i)+\log(1+R+\mu_i)+2\log2+2\log3\\
    &\le3\log2+2\log3+\log(1+R+\mu_i)+\log\Big(1+\frac{2(R+\|a\|_i)}{\eta_i}\Big)+2\alpha_{i+1}\Big(\frac{R+\mu_i+\|a\|_i}{\eta_i}\Big)\\
    &\le2\left(2\log2+\log3+\log\eta_i+\log\big(1+\frac R{\eta_i}+\mu_{i+1}\big)+\alpha_{i+1}\big(\frac R{\eta_i}+\mu_{i+1}\big)\right)\text{ by Lemma~\ref{lem:etamu}}\\
    &\le2\beta_{i+1}(R/\eta_i).
  \end{align*}

  We therefore have
  \[\alpha_0(\eta_0\cdots\eta_k\mu_k)\le\beta_0(\eta_0\cdots\eta_{k-1}\mu_k)
  \le 2^k\beta_k(\mu_k)\le\alpha_k(2\mu_k)+\log(1+2\mu_k)+\log36,\]
  and $\alpha_k(2\mu_k)$ is bounded by Lemma~\ref{lem:growth0}.
\end{proof}

\subsection{Inverted orbit growth of \boldmath $G_\omega$}\label{ss:inverted}
Recall that, for a group $G$ acting on a set $X$ on the right and a
basepoint $*\in X$, and for a word $w=g_1\dots g_R\in G^*$, we denote
by $\Delta(w)$ the cardinality of the \emph{inverted orbit}
$\{*g_1\cdots g_R,*g_2\cdots g_R,\dots,*g_R,*\}$ of $*$ under $w$. We
denote by $\Delta(R)$ the maximal inverted orbit growth of words $w\in
S^R$. Following the notation used before, we denote by $\Delta_i(R)$
the maximal size of an inverted orbit under a word in $G_i$ of norm
at most $R$ in the metric $\|\cdot\|_i$.

\begin{proposition}\label{prop:invgrowth}
  There is an absolute constant $C$ such that for all $k\in\N$ we have
  \[2^k\le\Delta(\eta_0\cdots\eta_{k-1}\mu_k)\le C2^k.\]
\end{proposition}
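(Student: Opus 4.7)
The plan is to establish both inequalities by induction on $k$, mirroring the structure of the proof of Proposition~\ref{prop:gomega} but now tracking inverted orbit sizes rather than ball volumes, and using the self-similarity $\phi_i\colon G_i\to G_{i+1}\wr\sym_2$ in both directions.

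For the upper bound, consider a word $w\in G_i^*$ of norm at most $R$, with $\phi_i(w)=\pair{w_1,w_2}\pi$. Each point of the inverted orbit at $\rho_i=2^\infty$ lies in branch~$1$ or branch~$2$ of the binary tree; up to the branch-swaps induced by the occurrences of $a$ in $w$, the orbit points inside each branch form inverted orbits of $w_1$, respectively $w_2$, acting at the shifted basepoint $\rho_{i+1}$. This yields
\[\Delta_i(R)\le \Delta_{i+1}(p_1)+\Delta_{i+1}(p_2)+O(1),\qquad p_1+p_2\le \tfrac{2}{\eta_i}(R+\|a\|_i),\]
by Lemma~\ref{lem:contraction}. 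Passing to the concave majorand $\Delta_i^+$ as in the proof of Proposition~\ref{prop:gomega} converts this into $\Delta_i^+(R)\le 2\Delta_{i+1}^+((R+\|a\|_i)/\eta_i)+O(1)$; iterating $k$ times and absorbing the $\|a\|_i$ terms into $\mu_{i+1}$ via Lemma~\ref{lem:etamu} produces $\Delta_0(\eta_0\cdots\eta_{k-1}\mu_k)\le 2^k\Delta_k(O(\mu_k))+O(2^k)$, the base case being bounded by a constant using Lemma~\ref{lem:growth0}.

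For the lower bound, I would construct inductively a word $u_k\in G_0$ of norm $\le\eta_0\cdots\eta_{k-1}\mu_k$ whose inverted orbit at $\rho$ has size $\ge 2^k$. Given $v\in G_{i+1}$ realizing inverted orbit $\ge 2^k$, choose a preimage $\tilde v\in G_i$ of $\pair{v,1}\in G_{i+1}\wr\sym_2$ under $\phi_i$, and set $u:=\tilde v\cdot a\tilde v a$. Then $\phi_i(u)=\pair{v,v}$, so the two half-orbits sit in distinct branches of the tree; since branch~$1$ and branch~$2$ points are automatically disjoint, the inverted orbits combine disjointly and the size doubles to $2^{k+1}$. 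To control the norm, the lift $\tilde v$ is chosen to saturate the equalities~\eqref{eq:bcd}, so that $\|\tilde v\|_i\le(\eta_i/2)\|v\|_{i+1}+O(1)$ and consequently $\|u\|_i\le \eta_i\|v\|_{i+1}+O(\|a\|_i)$; the additive error is absorbed via Lemma~\ref{lem:etamu}, which matches the shift $\mu_{k+1}\mapsto\mu_k$ on the right-hand side.

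The main obstacle is the lower bound construction. Two points are delicate: first, one must select preimages $\tilde v$ that saturate Lemma~\ref{lem:contraction} uniformly in $\omega_i\in\{\mathbf0,\mathbf1,\mathbf2\}$, picking the heavier of the two possible preimages of each $b,c,d$-factor so that the expansion factor of $\|\cdot\|_i$ over $\|\cdot\|_{i+1}$ matches exactly $\eta_i$ rather than the cruder $\eta_+$; second, one must verify that with the basepoint convention $\rho=2^\infty$ the two tree branches both lie in the single orbit $X$, so that the two sub-orbits genuinely combine into an inverted orbit of $G_i$ at $\rho$. These bookkeeping issues are standard for the Grigorchuk groups but must be checked case-by-case on the letter $\omega_i$.
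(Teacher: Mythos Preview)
Your upper bound is correct and matches the paper's argument essentially verbatim.

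Your lower bound has a genuine gap. The step ``choose a preimage $\tilde v\in G_i$ of $\pair{v,1}$ under $\phi_i$'' is not well-defined: the map $\phi_i$ is injective but far from surjective, and for a generic $v\in G_{i+1}$ the element $\pair{v,1}$ simply does not lie in $\phi_i(G_i)$. You cannot repair this by working letter-by-letter and ``saturating~\eqref{eq:bcd}'', because each generator $b,c,d$ already contributes nontrivially to \emph{both} coordinates under $\phi_i$; there is no way to place $v$ on one side and $1$ on the other. Even if you relax to a lift $\tilde v$ with $\phi_i(\tilde v)=\pair{v,v'}$ for some junk $v'$, your doubling $u=\tilde v\, a\tilde v a$ then yields $\pair{vv',v'v}$, not $\pair{v,v}$, and the inverted-orbit doubling is lost. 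A second problem is the error control: your inductive step produces $\|u\|_i\le\eta_i\|v\|_{i+1}+O(\|a\|_i)$, and after $k$ iterations the additive errors sum to something of order $\sum_j\eta_0\cdots\eta_{j-1}\|a\|_j$, which need not be dominated by $\eta_0\cdots\eta_{k-1}\mu_k$. Lemma~\ref{lem:etamu} points the wrong way for this purpose.

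The paper avoids both difficulties by working at the level of words over $\{ab,ac,ad\}$ and defining explicit substitutions $\zeta_x$ (one for each $x\in\{\mathbf0,\mathbf1,\mathbf2\}$) with the property $\phi_i(\zeta_{\omega_i}(w))=\pair{w,w}$ up to a harmless conjugation by $a$. The crucial point is that the very same matrices $M_x$ that define the metrics via $M_{\omega_i}V_i=\eta_iV_{i+1}$ also encode the effect of $\zeta_x$ on the letter-count vector: if $Z$ records the numbers of $ab,ac,ad$ in $w$, then $Z^{\mathrm t}M_x$ records them in $\zeta_x(w)$. Starting from the single syllable $as$ of minimal norm $\mu_k$ in $G_k$ and applying $\zeta_{\omega_0}\cdots\zeta_{\omega_{k-1}}$, the norm in $\|\cdot\|_0$ becomes $W^{\mathrm t}M_{\omega_{k-1}}\cdots M_{\omega_0}V_0=\eta_0\cdots\eta_{k-1}W^{\mathrm t}V_k=\eta_0\cdots\eta_{k-1}\mu_k$ \emph{exactly}, with no error terms to absorb. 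This duality between the metric recursion and the substitution is what makes the lower bound sharp; your abstract lifting argument does not see it.
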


\begin{proof}[Proof of the upper bound]
  The calculation in this section follows closely that of the
  previous~\S. Let again $\Delta_i^+$ denote the concave majorand of
  $\Delta_i$. Lemma~\ref{lem:contraction} gives
  \[\Delta_i^+(\eta_i R)\le 2\Delta_{i+1}(R+\|a\|_i/\eta_i).\]
  The same argument as above applies, and we do not repeat it.
\end{proof}

On the other hand, we also obtain a lower bound on the inverted orbit
growth, using a substitution. For that purpose, define
self-substitutions $\zeta_x$ of $\{ab,ac,ad\}$, for
$x\in\{\mathbf0,\mathbf1,\mathbf2\}$, by
\[\begin{matrix}
  \zeta_{\mathbf0}:&ab\mapsto adabac,&ac\to acac,&ad\to adad,\\
  \zeta_{\mathbf1}:&ab\mapsto abab,&ac\to abacad,&ad\to adad,\\
  \zeta_{\mathbf2}:&ab\mapsto abab,&ac\to acac,&ad\to acadab,
\end{matrix}
\]
and note that, for any word $w\in\{ab,ac,ad\}^*$ (representing an
element of $G_{i+1}$) we have
\[\phi_i(\zeta_{\omega_i}(w))\begin{cases}
  \pair{w,w} & \text{ if $\zeta_{\omega_i}(w)$ contains an even number of `$a$'}\\
  \varepsilon\pair{wa,a^{-1}w} & \text{ if $\zeta_{\omega_i}(w)$ contains an odd number of `$a$'}.
\end{cases}
\]
In particular, $\zeta_x$ defines a homomorphism $G_{i+1}\to G_i$.

\begin{proof}[Proof of the lower bound]
  By induction, we see that for any non-trivial $w\in\{ab,ac,ad\}^*$
  (representing an element of $G_k$) we have
  \[\Delta_0(\zeta_{\omega_{0}}\cdots\zeta_{\omega_{k-1}}(w))\ge2^k.\]

  Note then that, if $Z\in\N^3$ count the numbers of $ab,ac,ad$
  respectively in $w$, then $Z^{\mathrm t} M_x$ counts the numbers of
  $ab,ac,ad$ respectively in $\zeta_x(w)$. Let $as\in\{ab,ac,ad\}$ be
  such that $\|s\|_k$ is minimal --- if $\beta\le\gamma,\delta$ then
  $s=b$, etc. Let $W$ be the vector in $\R^3$ with a $1$ at the
  position which $s$ has in $\{ab,ac,ad\}$ and $0$ elsewhere --- if
  $\beta\le\gamma,\delta$ then $W=(1,0,0)^{\mathrm t}$, etc. Set
  $w=\zeta_{\omega_{0}}\cdots\zeta_{\omega_{k-1}}(s)$. We have
  $\Delta_0(w)\ge2^k$, and
  \begin{align*}
    \|w\|_0&= W^{\mathrm t}M_{\omega_{k-1}}\cdots M_{\omega_{0}}V_0\\
    &= \eta_0\cdots\eta_{k-1}W^{\mathrm t}V_k = \eta_0\cdots\eta_{k-1}\mu_k.\qedhere
  \end{align*}
\end{proof}

\subsection{Choice of inverted orbits growth}\label{ss:choice}
If $w$ be a word of norm $R$ over $G$, then its inverted orbit
$\mathcal O_w$ is a subset of $X$ of cardinality at most $R+1$, and
containing $*$. Furthermore, if $w$ is a word over $S$, then $\mathcal
O_w$ is contained in the ball of radius $R$ in the Schreier graph of
$(X,*)$. The set $\{\mathcal O_w\mid w\in S^R\}$ is therefore a finite
subset of the power set of $X$, and we denote its cardinality by
$\Sigma(R)$.

\begin{proposition}\label{prop:invchoices}
  There is an absolute constant $D$ such that for all $k\in\N$ we have
  \[\Sigma(\eta_0\cdots\eta_{k-1}\mu_k)\le D^{2^k}.\]
\end{proposition}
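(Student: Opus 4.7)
The plan is to imitate the argument of Proposition~\ref{prop:gomega} almost verbatim, replacing the volume function $v_i$ by the orbit-counting function $\Sigma_i(R)$, defined as the number of distinct inverted orbits on the orbit $X_i$ of the basepoint under $G_i$ coming from words of norm $\le R$ in $G_i$. The key structural input is a decomposition of inverted orbits under the self-similarity map.

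First I would verify the following decomposition lemma: the orbit $X_i$ splits as $X_i^{(1)}\sqcup X_i^{(2)}$, each canonically identified with $X_{i+1}$ (by deleting the first coordinate of the sequences); and for any word $w\in G_i$ with $\phi_i(w)=\pair{g_1,g_2}\pi$, the inverted orbit $\mathcal{O}_w\subseteq X_i$ is determined by $\pi$ together with $\mathcal{O}_{g_1}$ and $\mathcal{O}_{g_2}$ on $X_{i+1}$. Concretely, the intersection of $\mathcal{O}_w$ with each $X_i^{(k)}$, read through the identification, is precisely the inverted orbit in $G_{i+1}$ of the factor $g_k$ (starting from the appropriate basepoint, possibly together with that basepoint itself). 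This is essentially the set-theoretic refinement of the cardinality decomposition underlying Proposition~\ref{prop:invgrowth}; the $\pi$ only records which of the two subtrees contains the basepoint of $w$.

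Combining the decomposition with Lemma~\ref{lem:contraction}, I obtain the recursion
\[\Sigma_i(R)\le 2\int_{p_1+p_2\le\frac{2}{\eta_i}(R+\|a\|_i)}\Sigma_{i+1}(p_1)\,\Sigma_{i+1}(p_2),\]
where the integral is a finite sum and the factor $2$ accounts for the two values of $\pi\in\sym_2$. This is formally identical in shape to inequality~\eqref{eq:contr}. Setting $\sigma_i(R):=(\log\Sigma_i)^+(R)$ and transplanting the concave-majorand manipulation from the proof of Proposition~\ref{prop:gomega} line by line, one obtains
\[\sigma_i(R)\le\log 2+\log\Bigl(1+\tfrac{2(R+\|a\|_i)}{\eta_i}\Bigr)+2\sigma_{i+1}\Bigl(\tfrac{R+\|a\|_i}{\eta_i}\Bigr),\]
and then, defining $\tau_i(R):=\sigma_i(R+\mu_i)+\log(1+R+\mu_i)+2\log 2+2\log 3$ and invoking Lemma~\ref{lem:etamu}, the telescoping inequality $\tau_i(R)\le 2\tau_{i+1}(R/\eta_i)$. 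Iterating $k$ times gives $\tau_0(\eta_0\cdots\eta_{k-1}\mu_k)\le 2^k\tau_k(\mu_k)$. The base case is controlled by Lemma~\ref{lem:growth0} with $A=2$: $v_k(2\mu_k)$ is bounded by an absolute constant $K$, so $\Sigma_k(2\mu_k)\le 2^K$ and hence $\tau_k(\mu_k)$ is bounded by an absolute constant $C''$. Exponentiating yields $\Sigma(\eta_0\cdots\eta_{k-1}\mu_k)\le\exp(2^kC'')=D^{2^k}$ with $D=e^{C''}$.

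The main obstacle is the decomposition lemma in the first step. The inverted orbit is an ordered sequence of points traced by successive suffixes of $w$ acting on the basepoint, and the interleaving between the two subtrees depends in an intricate way on the generators of $w$ swapping the two halves. One must check that, despite this interleaving, the set of points visited inside each subtree really is the inverted orbit of the corresponding wreath factor $g_k$ (based at the starting point in that subtree), so that the pair $(\mathcal{O}_{g_1},\mathcal{O}_{g_2},\pi)$ determines $\mathcal{O}_w$ as a subset of $X_i$. Once this is in place, everything else is a routine transcription of the Proposition~\ref{prop:gomega} calculation.
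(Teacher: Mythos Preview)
Your proposal is correct and follows the same route as the paper: establish the recursive inequality $\Sigma_i(R)\le\text{const}\int_{p_1+p_2\le\frac{2}{\eta_i}(R+\|a\|_i)}\Sigma_{i+1}(p_1)\Sigma_{i+1}(p_2)$ via the self-similar decomposition of inverted orbits, then transcribe the concave-majorand argument of Proposition~\ref{prop:gomega} verbatim. The paper's own proof is a two-line sketch stating exactly this recursion and the phrase ``the same argument as in Proposition~\ref{prop:gomega} applies''; your write-up is simply a more explicit unpacking, including the decomposition lemma (which the paper leaves implicit) and the base-case bound via $\Sigma_k(2\mu_k)\le 2^{v_k(2\mu_k)}$.
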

\begin{proof}
  Again we denote by $\Sigma_i(R)$ the choice of inverted orbits
  growth function of $G_i$.  The relevant inequality is
  \[\Sigma_i(\eta_i R)\le\int_{\ell+m=R+\|a\|_i/\eta_i}\Sigma_{i+1}(\ell)\Sigma_{i+1}(m),\]
  and the same argument as in Proposition~\ref{prop:gomega} applies.
\end{proof}
  
\section{Growth of the groups $W_\omega$}
Recall that $W_\omega$ is the permutational wreath product of a group
$A$ with $G_\omega$.  We estimate the growth of $W_\omega$ in terms of
its inverted orbit growth as follows:
\begin{lemma}\label{lem:growthW}
  Let $A$ have growth $v_A$, and let $G$ have growth $v_G$.  Let the
  inverted orbit growth of $G$ on $(X,*)$ be $\Delta$, and let its
  inverted orbit choice growth be $\Sigma$. Assume that $v_A$ is
  log-concave. Consider $W=A\wr_X G$. Then
  \[v_G(R) v_A(R/\Delta(R))^{\Delta(R)}\precsim v_W(R)\precsim v_G(R)
  v_A(R/\Delta(R))^{\Delta(R)}\Sigma(R).\]
\end{lemma}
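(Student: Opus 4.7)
The plan is to decompose elements of $W = A^X \rtimes G$ as pairs $(f,g)$ with $f\colon X\to A$ finitely supported, and to relate the word norm in $W$ to the sum of norms in $G$ and $A$ via the inverted orbit. Concretely, any word $w$ representing $(f,g)$ in the generators $S\cup T$ alternates as $t_0 s_1 t_1 \cdots s_\ell t_\ell$, where $s_i\in S$ generates $G$ and each $t_i$ is a word in the generators of $A$ based at $*$; then $g = s_1\cdots s_\ell$, and the insertion $t_i$ ends up placed at the point $*\,s_{i+1}\cdots s_\ell$. Hence $\mathrm{supp}(f) \subseteq \mathcal O_{s_1\cdots s_\ell}$, and conversely, given any word $w$ for $g$ whose inverted orbit contains the support of $f$, one produces a representative of $(f,g)$ of norm $\|w\|_G + \sum_{x}\|f(x)\|_A$ up to an additive constant. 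Thus
\[\|(f,g)\|_W \;\asymp\; \min_{w\,:\,\mathcal O_w \supseteq \mathrm{supp}(f)}\Bigl(\|w\|_G + \sum_{x\in\mathrm{supp}(f)}\|f(x)\|_A\Bigr).\]

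For the upper bound, I count $(f,g)$ of norm $\le R$: there are at most $v_G(R)$ choices for $g$, at most $\Sigma(R)$ choices for the inverted orbit $\mathcal O$ containing $\mathrm{supp}(f)$, and once $\mathcal O$ of size $n\le \Delta(R)$ is fixed, the number of admissible $f$ is at most
\[\sum_{r_1+\cdots+r_n\le R}\prod_{i=1}^n v_A(r_i) \;\le\; \binom{R+n}{n}\,v_A(R/n)^n,\]
by log-concavity of $v_A$ (which gives $\prod v_A(r_i) \le v_A(R/n)^n$ whenever $\sum r_i\le R$). Log-concavity together with $v_A(0)\ge 1$ further ensures that $n\mapsto n\log v_A(R/n)$ is non-decreasing, so the product above is dominated by $v_A(R/\Delta(R))^{\Delta(R)}$ up to a polynomial factor in $R$; multiplying yields $v_W(R) \precsim v_G(R)\,v_A(R/\Delta(R))^{\Delta(R)}\Sigma(R)$.

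For the lower bound, I fix once and for all a word $w_*$ in $G$ of norm $\le R$ whose inverted orbit $\mathcal O_* = \{x_1,\dots,x_m\}$ attains the maximum size $m = \Delta(R)$. For every $g\in G$ with $\|g\|_G\le R$ and every tuple $(a_1,\dots,a_m)\in A^m$ with $\|a_i\|_A \le R/m$, I construct a word representing the element $(f,g)\in W$ with $f(x_i)=a_i$ by inserting the words for $a_i$ at the appropriate interior positions of $w_*$, then concatenating with a geodesic word for $w_*^{-1}g$. Its norm is bounded by $R + m\cdot(R/m) + R = 3R$. Since $(f,g)$ is determined by its two components, these yield $v_G(R)\cdot v_A(R/\Delta(R))^{\Delta(R)}$ distinct elements of $W$ within the ball of radius $3R$, giving the lower bound.

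The main delicate point is the combinatorial optimization in the upper bound: namely that over all decompositions $r_1+\cdots+r_n\le R$ with $n\le\Delta(R)$, the quantity $\prod v_A(r_i)$ is controlled by $v_A(R/\Delta(R))^{\Delta(R)}$. This is exactly where log-concavity of $v_A$ is essential; without it, concentrating the $A$-budget onto fewer sites in a shorter orbit could yield more distinct configurations than the stated bound permits.
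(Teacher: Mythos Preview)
Your argument follows the same plan as the paper's: decompose $w\in W$ as $(f,g)$, use the inverted orbit to locate $\mathrm{supp}(f)$, and invoke log-concavity of $v_A$ to optimize $\prod_i v_A(r_i)$. Your lower bound is in fact more complete than the paper's written proof: the paper fixes the $G$-component to be the optimal word $w$ and only varies $f$, which literally yields $v_A(R/\Delta(R))^{\Delta(R)}$ elements and leaves the $v_G(R)$ factor implicit; your device of appending a geodesic for $w_*^{-1}g$ recovers it. (A harmless slip: $\|w_*^{-1}g\|\le 2R$, so your bound should be $4R$, not $3R$.)

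There is, however, one genuine gap in your upper bound. The factor $\binom{R+n}{n}$ is \emph{not} ``polynomial in $R$'' once $n$ is allowed to be of order $\Delta(R)$, which can be linear in $R$; for instance $\binom{2R}{R}\sim 4^R/\sqrt{\pi R}$. The paper absorbs this factor by a case split. If $A$ is finite, one does not record the composition $(r_1,\dots,r_n)$ at all: each $a_i$ ranges over at most $\#A$ elements, so the count of admissible $f$ is simply $(\#A)^n\le v_A(R/\Delta(R))^{\Delta(R)}$ by your monotonicity observation. If $A$ is infinite, then $v_A(r)\succsim r$, so $v_A(R/n)^n\succsim (R/n)^n$, and the crude estimate $\binom{R+n}{n}\le (e(R+n)/n)^n$ is absorbed into $v_A(CR/n)^n$ for a suitable constant $C$. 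With this repair your argument goes through.
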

\begin{proof}
  We begin by the lower bound. For $n\in\N$, consider a word $w$ of
  norm $R$ realizing the maximum $\Delta(R)$; write $\mathcal
  O(w)=\{x_1,\dots,x_k\}$ for $k=\Delta(R)$. Choose then $k$ elements
  $a_1,\dots,a_k$ of norm $\le R/k$ in $A$. Define $f\in
  \sum_XA$ by $f(x_i)=a_i$, all unspecified values being $1$. Then
  $wf\in W$ may be expressed as a word of norm
  $R+|a_1|+\dots+|a_k|\le 2R$ in the standard generators of $W$.

  Furthermore, different choices of $a_i$ yield different elements of
  $W$; and there are $v_A(R/k)^k$ choices for all the elements of
  $A$. This proves the lower bound.

  For the upper bound, consider a word $w$ of norm $R$ in $W$, and
  let $f\in\sum_XA$ denote its value in the base of the wreath
  product. The support of $f$ has cardinality at most $\Delta(R)$, and
  may take at most $\Sigma(R)$ values.

  Write then $\sup(f)=\{x_1,\dots,x_k\}$ for some $k\le\Delta(R)$, and
  let $a_1,\dots,a_k\in A$ be the values of $w$ at its support; write
  $\ell_i=\|a_i\|$. Since $\sum\ell_i\le R$, the norms of the
  different elements on the support of $f$ define a composition of a
  number not greater than $R$ into at most $k$ summands; such a
  composition is determined by $k$ ``marked positions'' among $R+k$,
  so there are at most $\binom{R+k-1}{k}$ possibilities, which we
  bound crudely by $R^k/k!$. Furthermore, if $A$ is finite, then no
  such composition occurs in the count, because the norms of the $a_i$
  are bounded. Each of the $a_i$ is then chosen among $v_A(\ell_i)$.
  elements, and again (by the assumption that $v_A$ is log-concave)
  there are $\prod v_A(\ell_i)\le v_A(R/k)^k$ total choices for the
  elements in $A$. In all cases, therefore, the $\binom{R+k-1}{k}$
  term is absorbed by $v_A(R/k)^k$: if $A$ is finite, then as we
  argued there is no binomial term, while if $A$ is infinite then
  $v_A(R)\succsim R$.

  We have now decomposed $w$ into data that specify it uniquely, and
  we multiply the different possibilities for each of the pieces of
  data. Counting the possibilities for the value of $w$ in $G$, the
  possibilities for its support in $X$, and the possibilities for the
  elements in $A$, we get
  \[v_W(R)\precsim v_G(R)v_A(R/k)^k\Sigma(R),\]
  which is maximized by $k=\Delta(R)$.
\end{proof}

\noindent The previous section then shows:
\begin{corollary}\label{cor:global}
  There are two absolute constants $F,E>1$ such that the growth
  function $v$ of $W_\omega=A\wr_X G_\omega$ satisfies
  \[E^{2^k}\le v(\eta_0\cdots\eta_{k-1}\mu_k)\le F^{2^k}.\]
\end{corollary}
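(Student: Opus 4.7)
The plan is to combine Lemma~\ref{lem:growthW} with the three estimates just proved in Section~\ref{ss:metrics}. Set $R_k := \eta_0\eta_1\cdots\eta_{k-1}\mu_k$, which is the argument at which every estimate in that section is formulated.

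For the upper bound, I would apply the right-hand inequality of Lemma~\ref{lem:growthW} at $R=R_k$:
\[v(R_k)\precsim v_{G_\omega}(R_k)\,v_A\bigl(R_k/\Delta(R_k)\bigr)^{\Delta(R_k)}\,\Sigma(R_k).\]
Each factor is at most a constant raised to the $2^k$-th power: Proposition~\ref{prop:gomega} gives $v_{G_\omega}(R_k)\le B^{2^k}$; the upper bound of Proposition~\ref{prop:invgrowth} gives $\Delta(R_k)\le C\cdot2^k$; and Proposition~\ref{prop:invchoices} gives $\Sigma(R_k)\le D^{2^k}$. Because $A$ is a fixed finitely generated group and $R_k/\Delta(R_k)$ stays bounded (its numerator is at most a geometric sequence in $k$ while its denominator is at least $2^k$), the middle factor is bounded by $E_A^{C\cdot 2^k}$ for some constant $E_A$ depending only on $A$. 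Multiplying the three bounds yields $v(R_k)\le F^{2^k}$.

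For the lower bound, I would use the left-hand inequality of Lemma~\ref{lem:growthW}, specialized to the explicit word $w$ constructed in the lower-bound argument of Proposition~\ref{prop:invgrowth}: $w$ has norm $R_k$ and an inverted orbit of size $\ge 2^k$. Inserting an arbitrary generator of $A$ at each of the $2^k$ orbit points produces at least $|A|^{2^k}\ge 2^{2^k}$ distinct elements of $W_\omega$, all representable by a word whose norm in $W_\omega$ is bounded by a constant times $R_k$. After absorbing this multiplicative constant into the scaling of $R_k$, this gives $v(R_k)\ge E^{2^k}$ with $E>1$ absolute.

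The point that needs verification is that the factor $v_A(R_k/\Delta(R_k))$ is genuinely at least $2$, i.e.\ that the argument is at least the length of a shortest nontrivial element of $A$. This is where Lemma~\ref{lem:etamu} plays its role: iterating it gives $R_k\ge\mu_0+\|a\|_0+\|a\|_1/\eta_0+\dots$, which keeps $R_k$ bounded below away from $0$; combined with $\Delta(R_k)\le C\cdot 2^k$ this gives $R_k/\Delta(R_k)\gtrsim\mu_k$, which is positive. After a harmless rescaling of the word metric on $W_\omega$ (which affects the absolute constants $E,F$ but not their existence), the insertion of one generator of $A$ per orbit point is affordable, and the lower bound closes.
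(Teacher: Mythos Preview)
Your plan matches the paper's proof, which is just a one-line appeal to Lemma~\ref{lem:growthW} combined with Propositions~\ref{prop:gomega}, \ref{prop:invgrowth}, and~\ref{prop:invchoices}.

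One step in your upper bound is incorrect, though the conclusion survives. You assert that $R_k/\Delta(R_k)$ stays bounded because ``its numerator is at most a geometric sequence in $k$ while its denominator is at least $2^k$''; but a geometric sequence with ratio up to $3$, divided by $2^k$, can grow like $(3/2)^k$. Concretely, for $\omega=\overline{\zot}$ one has $R_k\sim\eta_+^k\mu_0$ with $\eta_+>2$ and $\mu_k$ bounded, so $R_k/2^k\to\infty$. The actual reason the middle factor is at most $E_A^{C\cdot2^k}$ is that in this corollary $A$ is a fixed \emph{finite} group, so $v_A(\cdot)\le|A|$ unconditionally and no control on the argument is needed. (For infinite $A$ the bound $v(R_k)\le F^{2^k}$ genuinely fails; that case is treated separately via Lemma~\ref{lem:growthW} and yields a different growth type, cf.~\S\ref{ss:torsionfree}.)

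For the lower bound, your estimate $R_k/\Delta(R_k)\gtrsim\mu_k$ is right, but ``$\mu_k$ is positive'' together with a single rescaling of the metric does not close the argument when $\mu_k\to0$: the cost of inserting one $A$-generator per orbit point is a \emph{fixed} positive number, while your available budget per point shrinks with $\mu_k$. In every use the paper makes of the corollary, $\mu_k$ is in fact bounded away from $0$ --- by periodicity in Theorem~\ref{thm:periodic}, and by Corollary~\ref{cor:mu_k bounded} for the sequences used in Theorem~\ref{thm:main} --- and that is what makes the insertion affordable within a fixed multiple of $R_k$. Your invocation of Lemma~\ref{lem:etamu} is not what is doing the work here.
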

\begin{proof}
  Take together the upper bound on the growth of $G_\omega$ from
  Proposition~\ref{prop:gomega}, the bounds on the inverted orbit
  growth from Proposition~\ref{prop:invgrowth}, and the choices for
  the inverted orbits from Proposition~\ref{prop:invchoices}. The
  conclusion follows from Lemma~\ref{lem:growthW}.
\end{proof}

We now estimate what the growth of $W_\omega$ for periodic sequences
$\omega$. We start by the easy
\begin{lemma}\label{lem:expspaced}
  Let $v,v'\colon\N\to\N$ be increasing functions such that $v(R_t)\le
  v'(R_t)$ for an strictly increasing sequence $R_1,R_2,\dots$ with
  $R_{t+1}/R_t$ bounded. Then $v\precsim v'$.
\end{lemma}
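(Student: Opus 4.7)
The plan is to turn the pointwise comparison at the sequence $(R_t)$ into a global comparison by filling in the gaps using the monotonicity of $v$ and $v'$, together with the bounded-ratio hypothesis on $R_{t+1}/R_t$.

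First I would let $K$ be a constant with $R_{t+1}/R_t \le K$ for all $t \ge 1$. Given any $R \ge R_1$, there is a unique index $t$ such that $R_t \le R < R_{t+1}$. Since $v$ is increasing, $v(R) \le v(R_{t+1})$, and by hypothesis $v(R_{t+1}) \le v'(R_{t+1})$. On the other hand, $R_{t+1} \le K R_t \le K R$, so by monotonicity of $v'$ we have $v'(R_{t+1}) \le v'(KR)$. Chaining these inequalities yields
\[
v(R) \le v'(KR) \quad \text{for all } R \ge R_1,
\]
which is exactly the definition of $v \precsim v'$ with constants $C = K$ and $D = R_1$.

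There is no real obstacle here; the argument is a standard sandwiching of an arbitrary $R$ between two consecutive terms of a sequence with bounded multiplicative gaps. The only mild subtlety is that one must use monotonicity of $v$ to pass from $R$ up to $R_{t+1}$ (rather than down to $R_t$), because the assumed inequality goes from $v$ to $v'$ at the sample points; the bounded-ratio condition is then exactly what is needed to absorb the jump $R_t \to R_{t+1}$ into the multiplicative constant $C$ in the definition of $\precsim$.
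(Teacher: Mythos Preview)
Your proof is correct and follows essentially the same approach as the paper: sandwich an arbitrary $R$ between consecutive terms of the sequence, use monotonicity to pass to the nearest sample point above, apply the hypothesis there, and absorb the gap via the bounded-ratio constant $K$. The only cosmetic difference is that the paper indexes so that $R_{t-1}<R\le R_t$ rather than $R_t\le R<R_{t+1}$, but the argument is the same.
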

\begin{proof}
  Say $R_{t+1}/R_t\le K$ for all $t\in\N$. Given $R\in\N$, let
  $t\in\N$ be such that $R_{t-1}<R\le R_t$. Then
  \[v(R)\le v(R_t)\le v'(R_t)\le v'(KR),\]
  so $v\precsim v'$.
\end{proof}

The following result isolates a special case of
Theorem~\ref{thm:main}; we include it because it constructs groups
with additional properties (recursive presentation, self-similarity).
\begin{proposition}\label{prop:sprad=>estimate}
  Let $\omega=\overline{\omega_0\dots\omega_{k-1}}$ be a periodic
  sequence. Let $\eta\in(2,3)$ be such that
  \[\eta^k=\operatorname{sp.radius}(M_{\omega_{k-1}}\cdots
  M_{\omega_0}).\]
  Then the group $W_\omega$ has growth
  \[v_\omega(R)\sim\exp(R^{\log2/\log\eta}).\]
\end{proposition}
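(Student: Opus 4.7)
The plan is to exploit the periodicity of $\omega$ so that every quantity entering Corollary~\ref{cor:global} becomes explicit. First I would let $T:=\overline M_{\omega_{k-1}}\circ\cdots\circ\overline M_{\omega_0}$ denote the projective map on $\Delta$ induced by one full period. By Lemma~\ref{lem:birkhoff} each factor is a strict Hilbert contraction of $\Delta$, so $T$ is a strict contraction and therefore admits a unique fixed point $V^*\in\Delta$. Since $V^*$ lies in the image of $T$, and iterating shows it lies in every iterated image, in fact $V^*\in\Delta'$; so I may take $V_0:=V^*$ as the initial metric parameter. This choice makes the orbit $V_0,V_1,V_2,\dots$ purely $k$-periodic, so $\mu_{nk}=\mu_0=:\mu$ is a positive constant and the products $\eta_0\cdots\eta_{nk-1}$ are powers of the single period product $\eta_0\cdots\eta_{k-1}$.

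The next step is to identify that period product with $\eta^k$. Unwinding the fixed-point relation $T(V^*)=V^*$ gives
\[M_{\omega_{k-1}}\cdots M_{\omega_0}V^*=(\eta_0\eta_1\cdots\eta_{k-1})V^*,\]
so $V^*$ is a strictly positive eigenvector of the nonnegative matrix $M:=M_{\omega_{k-1}}\cdots M_{\omega_0}$. A standard Perron--Frobenius argument (pair with any nonzero nonnegative left eigenvector $w$ of $M$ provided by Perron--Frobenius applied to $M^{\mathrm t}$, and compare $w^{\mathrm t}MV^*=\rho w^{\mathrm t}V^*$ with $(\eta_0\cdots\eta_{k-1})w^{\mathrm t}V^*$, noting $w^{\mathrm t}V^*>0$) forces the eigenvalue to equal the spectral radius, so $\eta_0\cdots\eta_{k-1}=\eta^k$.

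Now set $R_n:=\eta^{nk}\mu$. Corollary~\ref{cor:global} gives $E^{2^{nk}}\le v_\omega(R_n)\le F^{2^{nk}}$, and using $2^{nk}=(R_n/\mu)^{\log 2/\log\eta}$ this reads
\[\exp(c\,R_n^{\log 2/\log\eta})\le v_\omega(R_n)\le\exp(C\,R_n^{\log 2/\log\eta})\]
for absolute constants $0<c<C$. Because the ratios $R_{n+1}/R_n=\eta^k$ are bounded, Lemma~\ref{lem:expspaced} applied to the upper and to the lower comparison in turn extends the two-sided estimate from the sparse sequence $(R_n)$ to all $R$, yielding $v_\omega\sim\exp(R^{\log 2/\log\eta})$.

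The main obstacle I anticipate is the Perron--Frobenius step: the individual matrices $M_x$ are reducible (each is block upper-triangular with repeated eigenvalue $2$), so one cannot invoke primitivity of the product $M$ directly; the argument must instead exploit the strict positivity of the fixed point $V^*\in\Delta$ together with the existence of a nonnegative left Perron eigenvector. Everything after that is a soft interpolation argument from Corollary~\ref{cor:global}.
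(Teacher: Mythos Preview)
Your proposal is correct and follows essentially the same route as the paper: choose $V_0$ to be the fixed point of the period map so that the orbit is $k$-periodic, identify $\eta_0\cdots\eta_{k-1}$ with the spectral radius $\eta^k$, plug this into Corollary~\ref{cor:global} at the points $R_n=\eta^{nk}\mu_0$, and interpolate via Lemma~\ref{lem:expspaced}. Your Perron--Frobenius pairing argument for the spectral-radius identification is in fact more careful than the paper's one-line appeal to contraction on $\Delta$, and correctly sidesteps the reducibility issue you flag.
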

\begin{proof}
  We choose $V_0\in\Delta$ to be an eigenvector for
  $M_{\omega_{k-1}}\cdots M_{\omega_0}$. Then its eigenvalue is
  $\eta_{k-1}\cdots\eta_0$, and coincides with its spectral
  radius. Indeed the maps $\overline M_x$ are contracting on $\Delta$,
  so $M_{\omega_{k-1}}\cdots M_{\omega_0}$ has precisely one real
  eigenvalue. We get $\eta_{k-1}\cdots\eta_0=\eta^k$. It suffices to
  estimate the growth function $w(R)$ of $W_\omega$ at
  exponentially-spaced values $(\eta_{k-1}\cdots\eta_0)^t$ for
  $t\in\N$, by Lemma~\ref{lem:expspaced}, and at these places we have,
  by Corollary~\ref{cor:global},
  \[E^{2^{kt}}\le v(\eta^{kt}\mu_0)\le F^{2^{kt}}\quad\text{for all }t\in\N,\]
  so, for all $R_t=\eta^{kt}\mu_0$, we get
  \[E^{(R_t/\mu_0)^{\log2/\log\eta}}\le v(R_t)\le
  F^{(R_t/\mu_0)^{\log2/\log\eta}}.\qedhere\]
\end{proof}
Note that, because $(V_0,\omega)$ is periodic, the $V_i$ define a
discrete sequence in $\Delta$ and in particular do not accumulate on
its boundary, so the function $\mu$ is bounded from below on
$\{V_i\}$. More care is needed in the general case.

\subsection{Dynamics on the simplex}
We now show that the spectral radii in
Proposition~\ref{prop:sprad=>estimate} are dense in the interval
$[2,\eta_+]$. For that purpose, it is useful to translate the problem
to a slightly different language.

Let $f$ be the projection of $s^{-1}$ to $\Delta'$. This is a
$3$-to-$1$ map, and is expanding for the Hilbert metric on $\Delta$,
because the $\overline M_x$ are contracting.  Periodic orbits under
$f$ correspond bijectively to $s$-periodic orbits in $\Omega^+$, by
reading them backwards. Indeed, the $\omega$-\coordinate\ can be
uniquely recovered by noting in which subsimplex of $\Delta'$ the
point lies.

More precisely, if $f^kq=q$, then for $i\in\{0,\dots,k\}$ let
$\omega_i$ be such that $f^{k-i}(q)$ belongs to the image of
$\overline M_{\omega_i}$, and extend the sequence $\omega$ periodically. Then
the $f$-orbit of $q$ is the reverse of the $s$-orbit of $(q,\omega)$.

Recall that $\eta$ is a continuous function on the simplex; it equals
$2$ on the boundary, and $3$ at the barycentre of $\Delta$. Write
$\theta(p)=\log\eta(p)$. For a periodic point $p$, of period $k$,
write $\theta^+(p)$ the Cesar\`o average of $\theta$ on $p$:
\[\theta^+(p):=\frac1k\big(\theta(p)+\theta(fp)+\cdots+\theta(f^{k-1}p)\big).\]

\begin{lemma}
  Let $p$ be an $f$-periodic point of period $k$ in $\Delta'$. Let
  $\omega_0,\dots,\omega_{k-1}$ be the corresponding address. Then the
  spectral radius of $M_{\omega_{k-1}}\cdots M_{\omega_0}$ equals
  $k\theta^+(p)$.
\end{lemma}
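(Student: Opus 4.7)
The plan is to exhibit an eigenvector of $M:=M_{\omega_{k-1}}\cdots M_{\omega_0}$ drawn from the $f$-orbit of $p$, read off its eigenvalue as a telescoping product of $\eta$-values along the orbit, identify that product with $\exp(k\theta^+(p))$, and conclude via Perron--Frobenius that it is the spectral radius. (The right-hand side of the statement is to be read as $\exp(k\theta^+(p))$, i.e.\ $k\theta^+(p)$ is the logarithm of the spectral radius; this is consistent with the use of the lemma in the proof of Proposition~\ref{prop:sprad=>estimate}.)

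Set $p_i:=f^i(p)$, so $p_k=p_0=p$. By construction of the address, $p_{k-i}=\overline M_{\omega_i}(p_{k-i+1})$ for each $i$, which unfolded gives the key recurrence
\[ M_{\omega_i}(p_{k-i+1})=\eta(p_{k-i+1},\omega_i)\,p_{k-i}. \]
Chaining these for $i=0,1,\dots,k-1$ starting at $p_1$, each application of $M_{\omega_i}$ moves one step backward along the cycle and extracts one factor of $\eta$; after $k$ steps the point returns to $p_1$. Thus $p_1$ is an eigenvector of $M$, with eigenvalue
\[ \lambda:=\prod_{i=0}^{k-1}\eta\bigl(p_{(k-i+1)\bmod k},\,\omega_i\bigr). \]
Interpreting $\eta$ as the single-variable function on $\Delta'$ given by $\eta(q):=\eta(f(q),x)$ for the unique letter $x$ with $q\in\overline M_x(\Delta)$ (which gives the correct boundary and barycentre values $2$ and $3$), each factor $\eta(p_{k-i+1},\omega_i)$ equals $\eta(p_{k-i})$; as $i$ varies the index $k-i$ runs over all of $\Z/k$, so $\lambda=\prod_{j=0}^{k-1}\eta(p_j)=\exp\bigl(\sum_j\theta(p_j)\bigr)=\exp(k\theta^+(p))$.

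It remains to see that $\lambda$ is the spectral radius of $M$. Since $p\in\Delta'$, all coordinates of $p_1$ are strictly positive, and $M$ is a nonnegative matrix with $Mp_1=\lambda p_1$ and $\lambda>0$. For any other eigenpair $(\mu,w)$ of $M$, choose $C>0$ with $|w_j|\le C\,(p_1)_j$ entrywise; then nonnegativity of $M$ gives
\[ |\mu|^n|w_j|=|(M^nw)_j|\le (M^n|w|)_j\le C\lambda^n(p_1)_j, \]
and taking $n$-th roots yields $|\mu|\le\lambda$. Hence $\lambda$ is the spectral radius. Equivalently, Lemma~\ref{lem:birkhoff} applied to the composition $\overline M=\overline M_{\omega_{k-1}}\cdots\overline M_{\omega_0}$ (whose image lies in a proper sub-region of $\Delta$) shows that $\overline M$ is a strict Hilbert contraction with $p_1$ as its unique fixed point, which also pins down $\lambda$ as the dominant eigenvalue.

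The main obstacle I anticipate is the cyclic-index bookkeeping in the telescoping product — the address convention reverses the orbit, so care is needed to match each $\omega_i$ with the correct orbit point. Once the strictly positive eigenvector $p_1$ has been produced, the identification with the spectral radius is a routine Perron--Frobenius argument.
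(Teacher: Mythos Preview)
Your argument is correct, including the reading of the statement as giving the \emph{logarithm} of the spectral radius and the careful unwinding of the reversed address convention to identify $p_1=f(p)$ as the eigenvector. The paper does not prove this lemma separately; the only ingredient it records is in the proof of Proposition~\ref{prop:sprad=>estimate}, where it is observed that a fixed point $V_0\in\Delta$ of $\overline M_{\omega_{k-1}}\cdots\overline M_{\omega_0}$ is an eigenvector of $M_{\omega_{k-1}}\cdots M_{\omega_0}$ with eigenvalue $\eta_0\cdots\eta_{k-1}$, and that this is the spectral radius because the projectivized maps are contracting on $\Delta$ --- exactly the two steps you carry out, with your Perron--Frobenius inequality supplying a self-contained alternative to the contraction argument.
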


\begin{proposition}\label{prop:cesaro}
  The averages $\theta^+(p)$ are dense in $[\log2,\log\eta_+]$.
\end{proposition}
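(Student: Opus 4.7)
The plan is first to exhibit periodic orbits whose Ces\`aro averages $\theta^+$ attain or approach each endpoint of $[\log 2, \log \eta_+]$, and then to interpolate between them by a concatenation construction with error $O(1/K)$ at concatenated period $K$. The upper endpoint is attained by the period-$3$ orbit arising from $\omega = \overline{\zot}$: the matrix product $M_{\mathbf 2}M_{\mathbf 1}M_{\mathbf 0}$ has spectral radius $\eta_+^3$ by definition of $\eta_+$, so $\theta^+ = \tfrac{1}{3}\log\eta_+^3 = \log\eta_+$ by the lemma preceding the proposition. For the lower endpoint, I would take the $(n+2)$-periodic addresses $\omega_n := \overline{\mathbf 0^n \mathbf{12}}$; a direct computation gives
\[
M_{\mathbf 0}^n = \begin{pmatrix} 1 & 2^n-1 & 2^n-1 \\ 0 & 2^n & 0 \\ 0 & 0 & 2^n \end{pmatrix},
\]
so $M_{\mathbf 2}M_{\mathbf 1}M_{\mathbf 0}^n$ has trace $\Theta(2^n)$ and hence spectral radius $\Theta(2^n)$; thus $\theta^+(\omega_n) = \frac{n\log 2 + O(1)}{n+2} \to \log 2$ as $n \to \infty$.

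For density inside the interval, fix $\lambda \in (\log 2, \log\eta_+)$ and $\varepsilon > 0$. Choose $n$ large enough that $A_1 := \theta^+(\omega_n) < \lambda$, and set $A_2 := \log\eta_+$, $k_1 := n+2$, $k_2 := 3$. Both periodic orbits lie in a compact set $K_n \subset \Delta'$ at positive Hilbert distance from $\partial\Delta$, on which the $\overline M_x$ are uniform Hilbert contractions with some ratio $\lambda_n < 1$ (Lemma~\ref{lem:birkhoff}) and on which $\theta$ is Lipschitz. Write $\overline M_{\omega_n}$ for the composition of the $k_1$ maps $\overline M_x$ read off from one period of $\omega_n$, and similarly $\overline M_{\zot}$ for the three-fold composition read off from $\zot$. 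For positive integers $N_1, N_2$, form the concatenated address $\omega^{N_1,N_2} := \overline{\omega_n^{N_1}\,(\zot)^{N_2}}$ of period $K := N_1 k_1 + N_2 k_2$, and I claim
\[
\theta^+(\omega^{N_1,N_2}) = \frac{N_1 k_1 A_1 + N_2 k_2 A_2}{K} + O(1/K),
\]
with implied constant depending only on $n$.

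To prove the claim, observe that within an $\omega_n^{N_1}$ block of the associated $s$-orbit, iterates of $\overline M_{\omega_n}$ contract geometrically (ratio $\lambda_n^{k_1}$) toward its unique fixed point in $K_n$, namely the basepoint of the pure $\omega_n$-orbit; by Lipschitz continuity of $\theta$ the cumulative discrepancy between the Ces\`aro sum over that block and $N_1 k_1 A_1$ is geometrically summable, hence $O(1)$. The same holds for the $(\zot)^{N_2}$ block, and with exactly two transitions per full period the total error is $O(1)$, yielding $O(1/K)$ after normalisation. Choosing then $N_1/N_2$ so the weighted combination is within $\varepsilon/2$ of $\lambda$ and $K$ large enough to absorb the $O(1/K)$ residual within $\varepsilon/2$ produces a periodic point with $\theta^+$ within $\varepsilon$ of $\lambda$, giving density in $[\theta^+(\omega_n), \log \eta_+]$; the union over $n$ covers $[\log 2, \log \eta_+]$ densely since $\theta^+(\omega_n) \to \log 2$. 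The main obstacle is the non-uniformity of Hilbert contraction in Lemma~\ref{lem:birkhoff}: strict contraction degenerates near $\partial\Delta$, precisely where the orbits for large $n$ concentrate. The resolution is to fix $n$ before interpolating, confining the dynamics to the fixed compactum $K_n$; the implied constants in the $O(1/K)$ error then depend on $n$ but not on $N_1, N_2$, which is all that is needed.
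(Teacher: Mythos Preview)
Your argument is correct and follows essentially the same strategy as the paper: realise the endpoints via the periodic addresses $\overline{\zot}$ and $\overline{\mathbf 0^n\mathbf{12}}$, then interpolate by concatenating blocks of the two into a longer periodic address and using contraction to show the resulting Ces\`aro average is within $O(1/K)$ of the desired convex combination. The paper phrases the concatenation step dually, working with the expanding inverse $f$ and invoking the Banach fixed-point theorem on small neighbourhoods of the two periodic points (with explicit transit segments of bounded length $\ell,\ell'$ between blocks), but the content is the same; your direct use of the uniform Hilbert contraction of the composite maps $\overline M_{\omega_n}$ and $\overline M_{\zot}$ (legitimate since each word contains all three symbols, so each composite has image relatively compact in $\Delta$) simply makes the transit segments unnecessary.
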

\begin{proof}
  Consider $p,p'\in\Sigma$ two periodic points, say of period $k,k'$
  respectively.

  Because $f$ is expanding, there exist arbitrarily small open sets
  $\mathcal U\subset\Delta$ containing $p$, such that $\mathcal
  U\subset f^k(\mathcal U)$ and $f^k$ uniformly expands on $\mathcal
  U$; similarly $f^{k'}$ uniformly expands on the neighbourhood
  $\mathcal U'\subset\Delta$ of $p'$. Without loss of generality, we
  assume $\mathcal U$ and $\mathcal U'$ are relatively compact.

  For any finite sequence
  $\omega_1\dots\omega_n\in\{\mathbf0,\mathbf1,\mathbf2\}^n$, consider
  the fixed point $p_\omega$ of $\overline M_{\omega_n}\cdots\overline
  M_{\omega_1}$. The triangles $\overline M_{\omega_n}\cdots\overline
  M_{\omega_1}(\Delta)$ become arbitrarily small, as $n\to\infty$ and
  all three symbols occur in $\omega_1\dots\omega_n$; so periodic
  points are dense in $\Delta$.

  It then follows that, for every non-empty open set $\mathcal O$, we
  have $\bigcup_{n\ge0}f^n(\mathcal O)=\Delta$; indeed
  $\bigcup_{n\ge0}f^n(\mathcal O)$ is open, and contains all periodic
  points because $f$ is expanding.

  Because $\theta$ is continuous on $\Delta$, for any $\epsilon>0$ we
  can make $\mathcal U$ small enough so that
  $\frac1k(\theta(q)+\theta(fq)+\cdots+\theta(f^{k-1}q))$ is less that
  $\epsilon$ away from $\theta^+(p)$, for all $q\in\mathcal
  U$. Similarly, $\mathcal U'$ may be chosen small enough that the
  average of $\theta$ on the first $k'$ points of the orbit of $q'$ is
  at most $\epsilon$ away from $\theta^+(p')$, for all $q'\in\mathcal
  U'$.

  Since $\mathcal U'$ is relatively compact, there exists $\ell\in\N$
  such that $\mathcal U'\subset f^\ell(\mathcal U)$ and $f^\ell$
  uniformly expands on $\mathcal U$; similarly $\mathcal U\subset
  f^{\ell'}(\mathcal U')$ for some $\ell'\in\N$.

  Consider now $n,n'\in\N$. We have locally defined contractions
  $f^{-k}\colon\mathcal U\to\mathcal U$, $f^{-\ell}\colon\mathcal U'\to\mathcal
  U$, $f^{-k'}\colon\mathcal U'\to\mathcal U'$ and $f^{-\ell'}\colon\mathcal
  U\to\mathcal U'$, which we compose:
  \[\mathcal U\xrightarrow{f^{-\ell'}}\mathcal
  U'\xrightarrow{f^{-n'k'}}\mathcal U'\xrightarrow{f^{-\ell}}\mathcal
  U\xrightarrow{f^{-nk}}\mathcal U.
  \]
  This is a contraction $\mathcal U\to\mathcal U$, so by the Banach
  fixed point theorem there exists a fixed point $q\in\mathcal U$ for
  the composite $f^{\ell+nk+\ell'+n'k'}$ that remains close to the
  orbit of $p$ for $nk$ steps, wanders for $\ell$ steps, remains close
  to the orbit of $p'$ for $n'k'$ steps, and wanders back to $q$ for
  $\ell'$ steps.

  The average of $\theta$ on $q$ is
  \[\theta^+(q)=\frac1{nk+\ell+n'k'+\ell'}\big(\theta(q)+\cdots+\theta(f^{nk+\ell+n'k'+\ell'-1}q)\big).\]
  The orbit of $q$ is, except at $\ell+\ell'$ instants, either close
  to the orbit of $p$ or close to the orbit of $p'$. Consider any
  $\rho\in\mathbb R_+$. Then, as $n,n'\to\infty$ with ratio
  $n/n'\to\rho$, we have
  \[\liminf_{n,n'\to\infty}\theta^+(q)\le\frac{\rho k(\theta^+(p)+\epsilon)+k'(\theta^+(p')+\epsilon)}{\rho k+k'},\]
  and
  \[\limsup_{n,n'\to\infty}\theta^+(q)\ge\frac{\rho k(\theta^+(p)-\epsilon)+k'(\theta^+(p')-\epsilon)}{\rho k+k'}.\]
  Letting $\epsilon$ tend to $0$ and simultaneously considering all
  possible $\rho$, we obtain a dense set of values in
  $[\theta^+(p),\theta^+(p')]$. More precisely, for any
  $t\in[\theta^+(p),\theta^+(p')]$, let $\rho$ be such that $(\rho
  k\theta^+(p)+k'\theta^+(p'))/(\rho k+k')=t$; then for any
  $\epsilon>0$ there exists $\mathcal U,\mathcal U'$ as above; then
  $\ell,\ell'$ as above; and finally $n,n'$ large enough so that
  $|\theta^+(q)-t|<2\epsilon$.

  Finally, note that we can take for $p$ the periodic orbit
  corresponding to the sequence
  $\omega=\overline\zot$; while for $p'$ we
  consider the periodic point corresponding to the sequence
  $\omega'=\overline{\mathbf0^u\mathbf{12}}$ for $u$ sufficiently large; we
  have $\theta^+(p')\to\log2$ as $u\to\infty$.
\end{proof}

\subsection{Proof of Theorem~\ref{thm:periodic}}
Let $\omega$ be a periodic sequence, and let $A$ be a finite group. We
already showed in Proposition~\ref{prop:Wbranched} that $W_\omega$ is
self-similar and branched.  To show that it is contracting, we endow
it with the following metric. It is generated by $\{1,a,b,c,d\}\times
A$; for $s\in\{a,b,c,d\}$ and $t\in A$, the norm of $st$ is $\|s\|$,
while for $t\neq1$ in $A$ its norm is $\|a\|$.

We start by a more general result, which holds for arbitrary sequences
$\omega$. Recall from~\S\ref{ss:metrics} that we set
$G_i=G_{s^i\omega}$ and selected metrics $\|\cdot\|_i$ on $G_i$,
giving constants $\eta_i$. We set $W_i=A\wr_X G_i$, with the metric
above.

The reason we can achieve lower bounds on the growth of $W_i$ may be
illustrated as follows; though we will not use it directly. Consider
the corresponding permutational wreath products $W_i=A\wr_X G_i$,
and note that we also have injective homomorphisms $\psi_i\colon
W_i\to W_{i+1}\wr\sym_2$. The maps $\psi_i$ have the same Lipschitz
property as $\phi_i$, see Lemma~\ref{lem:B}. Additionally, for all
elements of $W_i$ with sufficiently large support (and there are
sufficiently many so as to dominate the asymptotics), we have a
reverse inequality $\|\psi_i(g)\|\ge2/\eta_i\|g\|-C$.

The following combines Lemma~\ref{lem:contraction}
and~\cite{bartholdi-erschler:permutational}*{Lemma~4.2}; we only
sketch the proof since it follows closely that of its models.

\begin{lemma}\label{lem:B}
  For each $i\ge0$ and each $g\in W_i$, with
  $\psi_i(g)=\pair{g_1,g_2}\pi$, we have
  \[\|g_1\|_{i+1}+\|g_2\|_{i+1}\le2\|a\|_{i+1}+\frac2{\eta_i}(\|g\|_i+\|a\|_i).\]
\end{lemma}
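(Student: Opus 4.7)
The plan is to extend the argument of Lemma~\ref{lem:contraction} to the wreath product $W_i$ by tracking both the $G_i$-component and the $A$-valued base of each letter of a word representing $g \in W_i$. Write $g$ as a word $w$ over the generators $\{1,a,b,c,d\} \times A$ of $W_i$; recall that a combined generator $(s,t)$ with $s \in \{a,b,c,d\}$ has norm $\|s\|_i$ (independent of $t \in A$), so an $A$-value at the basepoint $\rho$ ``rides along'' with a simultaneous $G_i$-move at no extra cost, while a standalone $A$-generator $(1,t)$ with $t \ne 1$ has separate norm $\|a\|_i$.

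First I would normalize the word into an alternating pattern of ``long'' letters (first coordinate in $\{b,c,d\}$) and ``short'' letters (first coordinate in $\{1,a\}$), using the relations $b^2=c^2=d^2=bcd=1$ of $G_i$; this is the analogue of the grouping step used at the end of Lemma~\ref{lem:contraction}. I would then eliminate internal standalone $A$-letters by absorbing them into adjacent $G_i$-letters: since $b,c,d$ fix $\rho$, we have $(1,t)(x,t') = (x,tt')$ and $(x,t')(1,t) = (x,t't)$ for $x \in \{b,c,d\}$, and $(a,t')(1,t) = (a,t't)$ on the right of an $a$-letter. The only pattern that cannot be reduced to a single generator is an $(1,t)$ sitting immediately before an $(a,t')$-letter, which can occur at most at the very beginning and at the very end of the normalized word; hence at most two standalone $A$-letters survive.

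Next I apply $\psi_i$ to the normalized word, letter-by-letter. Each long letter $(s_j,t_j)$ paired with an adjacent $(a,t'_j)$ contributes, by formula~\eqref{eq:bcd}, a total of $\|s_j\|_{i+1} + \|\omega_i(s_j)\|_{i+1} = (2/\eta_i)(\|s_j\|_i + \|a\|_i)$ to $\|g_1\|_{i+1} + \|g_2\|_{i+1}$; the combined $A$-entries $t_j, t'_j$ are absorbed into the same $\|s_j\|_{i+1}$ or $\|a\|_{i+1}$ cost in $W_{i+1}$ under the analogous metric convention at level $i+1$, so they do not add anything. Summing over all paired letters gives the main term $(2/\eta_i)(\|g\|_i + \|a\|_i)$. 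The two potentially unabsorbed endpoint standalone $A$-letters each map under $\psi_i$ to a single $A$-valued generator supported at one point of $\rho_{i+1} G_{i+1}$ in one of the two $W_{i+1}$-copies, of norm $\|a\|_{i+1}$; together they contribute at most $2\|a\|_{i+1}$, which is exactly the additive slack in the claim.

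The main obstacle is rigorously justifying the normalization step and the ``ride-along'' principle: one has to check that every $A$-value except at most two endpoint ones can be absorbed into a neighboring $G_i$-letter without increasing the norm of the word in $W_i$, and that the same absorption carries through in the target $W_{i+1}$-components. This is precisely the bookkeeping performed in \cite{bartholdi-erschler:permutational}*{Lemma~4.2}, which is cited in the statement; once these absorption steps are in hand, the rest of the argument mirrors the proof of Lemma~\ref{lem:contraction} verbatim.
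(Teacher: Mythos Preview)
Your proposal is correct and follows essentially the same route as the paper: normalize a minimal word in $W_i$ using that $\langle b,c,d\rangle$ commutes with $A$, pair each $\{b,c,d\}$-letter with an adjacent $a$ and invoke~\eqref{eq:bcd} to obtain the main term $(2/\eta_i)(\|g\|_i+\|a\|_i)$, and account for at most two unabsorbed $A$-letters giving the additive $2\|a\|_{i+1}$. The only cosmetic difference is that the paper locates those two extra $A$-terms as the leading entries $u_0,v_0$ in the $W_{i+1}$-words for $g_1,g_2$ rather than as endpoint letters in the $W_i$-word for $g$.
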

\begin{proof}
  Consider a minimal representation $g=s_1t_1\cdots s_nt_n$ of $g\in
  W_i$ with the $s_j$'s in $\{1,a,b,c,d\}$ and the $t_j$'s in
  $A$. Recall that we have $\psi_i:W_i\to W_{i+1}\wr\sym_2$, defined
  on generators by $\psi_i(s)=\phi_i(s)$ for $s\in\{a,b,c,d\}$, and
  $\psi_i(t)=\pair{1,t}$ for $t\in A$. Therefore, $\langle
  b,c,d\rangle$ commutes with $A$ in $W_i$; so (by minimality) we may
  assume that no two consecutive $s_j,s_{j+1}$ belong to $\{b,c,d\}$.

  It follows that we have $g=a^\epsilon x_1t_1ax_2t_2\cdots ax_mt_m$,
  with $\epsilon\in\{0,1\}$, all $x_2,\dots,x_{m-1}\in\{b,c,d\}$,
  $x_1,x_m\in\{1,b,c,d\}$, and $m\le(n+1)/2$.  We then proceed as in
  Lemma~\ref{lem:contraction}, to construct words representing
  $g_1,g_2$. Each $t_j$ contributes a $t_j$ to either $g_1$ or $g_2$;
  each $x_j$ contributes a letter in $\{1,a,b,c,d\}$ to each of $g_1$
  and $g_2$. Therefore, $g_1$ has the form $u_0y_1u_1\cdots y_mu_m$ for
  some $u_j\in A$ and $y_i\in\{1,a,b,c,d\}$, while $g_2$ has the
  similar form $v_0z_1v_1\cdots z_mv_m$. Furthermore,
  $\|y_j\|_{i+1}+\|z_j\|_{i+1}\le2/\eta_i\|ax_j\|$,
  by~\eqref{eq:bcd}. Then
  $\|g_1\|_{i+1}+\|g_2\|_{i+1}\le\|u_0\|_{i+1}+\|v_0\|_{i+1}+2/\eta_i(\|g\|_i+\|a\|_i)$,
  as was to be shown.
\end{proof}

\noindent We finally recall a classical estimate of growth as a
function of sum-contraction:
\begin{lemma}[See e.g.~\cite{bartholdi:upperbd}*{Proposition~4.3}]
  If $G$ is self-similar and sum-contracting with contraction
  $\eta$, then $v_G\precsim\exp(R^{\log d/\log\eta})$.
\end{lemma}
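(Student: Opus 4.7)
The plan is to adapt the same concave-majorant technique used in the proof of Proposition~\ref{prop:gomega}, applied now to the single inequality coming from sum-contraction, rather than to a sequence of contractions along $\omega$. Starting from the hypothesis $\sum_{i=1}^d\|g_i\|\le\lambda\|g\|+C$ with $\lambda=d/\eta$, I would first observe that every $g\in G$ with $\|g\|\le R$ is determined by the data $(\pi,g_1,\dots,g_d)$ where $\pi\in\sym_d$ and $(\|g_1\|,\dots,\|g_d\|)$ is a composition of some number $\le\lambda R+C$ into $d$ non-negative summands. This gives the key counting inequality
\[
v_G(R)\le d!\binom{\lfloor\lambda R+C\rfloor+d}{d}\max_{r_1+\cdots+r_d\le\lambda R+C}\prod_{i=1}^d v_G(r_i),
\]
where the binomial factor counts the choices of composition and is polynomial in $R$.

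Next I would pass to $\alpha(R):=(\log v_G)^+(R)$, the concave majorand of $\log v_G$, exactly as in the proof of Proposition~\ref{prop:gomega}. Taking logarithms in the previous display and using concavity of $\alpha$ (namely $\sum\alpha(r_i)\le d\,\alpha((\sum r_i)/d)$), one obtains a clean one-dimensional recursion
\[
\alpha(R)\le d\,\alpha\!\left(\frac{R}{\eta}+\frac{C}{d}\right)+K\log(R+1)+K'
\]
for absolute constants $K,K'$ depending only on $d$ and $C$. The additive constant $C/d$ in the argument and the $K\log(R+1)$ correction are both lower-order, and the recursion is the same shape as in~\S\ref{ss:growthupper}.

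The main step is then to iterate this recursion. Applying it $k$ times down from $R$ gives
\[
\alpha(R)\le d^{k}\,\alpha\!\left(\frac{R}{\eta^{k}}+O(1)\right)+\sum_{j=0}^{k-1}d^{j}\bigl(K\log(R/\eta^{j}+1)+K'\bigr).
\]
Choose $k=\lfloor\log(R/R_0)/\log\eta\rfloor$ so that $R/\eta^{k}$ is bounded by some constant $R_0$; then $\alpha(R/\eta^{k}+O(1))$ is bounded by an absolute constant, while $d^{k}\le C_1R^{\log d/\log\eta}$. Since $d<\eta$ is not required (only $\lambda<1$, i.e.\ $\eta>d\cdot 1=d$ would give $\log d/\log\eta<1$, but even with $\eta\le d$ the computation is formally the same and yields exponential growth as expected), the geometric series in the error term is dominated by the main term $d^{k}$, up to constants. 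Putting everything together yields $\alpha(R)\le C_2R^{\log d/\log\eta}$, hence $v_G(R)\le\exp\bigl(C_2R^{\log d/\log\eta}\bigr)$, which is precisely $v_G\precsim\exp(R^{\log d/\log\eta})$.

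The one place that requires mild care is verifying that $\log v_G$ is actually approximated up to $\sim$ by its concave majorand; this is the content of Lemma~\ref{lem:concave} for submultiplicative functions, and $v_G$ is submultiplicative by the triangle inequality on the word metric, so this step is automatic. The rest is bookkeeping of the error terms, which is standard and follows the same pattern as the proof of Proposition~\ref{prop:gomega}.
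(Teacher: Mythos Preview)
The paper does not prove this lemma; it simply cites \cite{bartholdi:upperbd}*{Proposition~4.3} and moves on. Your argument is the natural one and is essentially the specialization of the paper's own Proposition~\ref{prop:gomega} technique to a single self-map $\phi$ rather than a sequence $(\phi_i)$, so in spirit it matches what the cited reference does.

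Two minor corrections. First, your final paragraph is unnecessary and a bit off: you do not need $\log v_G$ to be $\sim$-equivalent to its concave majorand $\alpha$. You only need the trivial inequality $\log v_G\le\alpha$, together with the observation that the right-hand side of your recursion is already concave in $R$ (a sum of a concave $d\,\alpha(R/\eta+C/d)$ and a concave $K\log(R+1)$), so taking the concave majorand of the left-hand side is legitimate. Once $\alpha(R)\le C_2R^{\log d/\log\eta}$ is established, $v_G(R)\le\exp\bigl((C_2^{\log\eta/\log d}R)^{\log d/\log\eta}\bigr)$ gives $v_G\precsim\exp(R^{\log d/\log\eta})$ immediately. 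Lemma~\ref{lem:concave} is not relevant here (and it concerns functions satisfying~\eqref{eq:g0}, not general submultiplicative ones).

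Second, the parenthetical about ``even with $\eta\le d$'' is moot: by the paper's definition of sum-contracting, $\lambda<1$, so $\eta=d/\lambda>d$ always, and hence $\log d/\log\eta<1$. There is no case distinction to make.
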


\begin{proof}[Proof of Theorem~\ref{thm:periodic}]
  Lemma~\ref{lem:B} gives an upper bound on the growth of $W_\omega$;
  the same upper bound comes from
  Proposition~\ref{prop:sprad=>estimate}.

  The density of the growth exponents in $[\alpha_-,1]$
  follows from Proposition~\ref{prop:cesaro}.

  To obtain torsion-free examples, apply Lemma~\ref{lem:growthW},
  recalling that the growth of $\tilde G_\omega$ is equivalent to that
  of $\Z\wr G_\omega$, see~\S\ref{ss:torsionfree}.
\end{proof}

\section{Proof of Theorem~\ref{thm:main}}
We now obtain more growth functions by considering $W_\omega$ and
$\tilde G_\omega$ for non-periodic sequences $\omega$. We first give
an estimate for $\mu_k$; only the first part of Lemma~\ref{lem:mu_k}
will be used.

For a finite sequence
$\omega=\omega_0\dots\omega_{n-1}\in\{\mathbf0,\mathbf1,\mathbf2\}^n$
and $p\in\Delta$, we write by extension
\[\overline M_\omega=\overline M_{\omega_0}\cdots\overline M_{\omega_{n-1}}\colon\Delta\to\Delta\]
and
\[\eta(p,\omega_0\dots\omega_{n-1})=\eta(p,\omega_0)\eta(\overline M_{\omega_0}p,\omega_1)\cdots\eta(\overline M_{\omega_1\dots\omega_{n-2}}p,\omega_{n-1}).\]

Fix also once and for all $V_0\in\Delta'$. When a sequence $\omega$ is
under consideration, it defines $V_k\in\Delta'$ and $\eta_k,\mu_k$ by
$V_{k+1}=\overline M_{\omega_k}(V_k)$ and
$\eta_{k+1}=\eta(V_k,\omega_k)$ and $\mu_k=\mu(V_k)$.

\begin{lemma}\label{lem:mu_k}
  Consider $\omega\in\{\mathbf0,\mathbf1,\mathbf2\}^\infty$. For
  $k\in\N$, let $\ell\in\N$ be maximal such that
  $\omega_{k-\ell+1}\dots\omega_k$ contains only two of the symbols
  $\mathbf0,\mathbf1,\mathbf2$, say $\mathbf a,\mathbf b$. Furthermore
  write $\omega_{k-\ell+1}\dots\omega_k=\mathbf a^{i_1}\mathbf
  b^{j_1}\dots\mathbf a^{i_m}\mathbf b^{j_m}$ with $m$ minimal. Then
  \[K/m\le\mu_k\le L/m
  \]
  for absolute constants $K,L$.
\end{lemma}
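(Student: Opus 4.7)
The plan is to track the unnormalized coordinates of $V_i$ under the linear maps $M_{\omega_i}$ and project to $\Delta$ only at the end. By the $\sym_3$-symmetry of $\{\mathbf 0,\mathbf 1,\mathbf 2\}$, I may assume without loss of generality that the two symbols appearing in the suffix are $\mathbf a=\mathbf 0$ and $\mathbf b=\mathbf 1$. The third rows of both $M_{\mathbf 0}$ and $M_{\mathbf 1}$ equal $(0,0,2)$, so writing the unnormalized coordinates of $V_i$ as $(B_i,C_i,D_i)$, the third component scales exactly by $2$ at each suffix step: $D_k=2^\ell D_{k-\ell}$. The lemma thus reduces to showing that the ratio $T_k/D_k$, with $T_k=B_k+C_k+D_k$, is bounded above and below by absolute constant multiples of $m$, and additionally that $D_k/T_k$ is the minimum of the three normalized coordinates of $V_k$.

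The main computation establishes additive linear growth of $\sigma_i:=\beta^*_i+\gamma^*_i$, where $\beta^*_i:=B_i/D_i$ and $\gamma^*_i:=C_i/D_i$, so that $T_i/D_i=1+\sigma_i$. Applying $M_{\mathbf 0}^{i_r}M_{\mathbf 1}^{j_r}$ yields an explicit affine recursion on $(\beta^*,\gamma^*)$. The key structural fact is that $(1,1)$ is a common eigenvector of the top-left $2\times 2$ blocks of $M_{\mathbf 0}$ and $M_{\mathbf 1}$ with eigenvalue $2$, the ``neutral direction'' of the dynamics; the transverse direction is contracted geometrically. Consequently, the shift $\sigma_r-\sigma_{r-1}$ at each alternation is uniformly bounded above and below by positive absolute constants. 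In the symmetric case $\beta^*_{r-1}=\gamma^*_{r-1}$, a direct computation gives $\sigma_r-\sigma_{r-1}=(3I_rJ_r+I_r+J_r)/(I_rJ_r+I_r+J_r+1)\in[5/4,3]$ for $I_r=2^{i_r}-1$, $J_r=2^{j_r}-1$; the asymmetric regime is handled by observing that the contraction of the transverse direction drives $\beta^*-\gamma^*$ into a bounded interval within a bounded number of steps, after which the same uniform bounds apply.

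The initial condition is supplied as follows. If $\ell<k$ (the generic case), maximality of $\ell$ forces $\omega_{k-\ell}=\mathbf 2$, so $V_{k-\ell+1}=\overline M_{\mathbf 2}(V_{k-\ell})$ lies in the image of $\overline M_{\mathbf 2}$, which is the triangle in $\Delta$ with vertices $(1/3,1/3,1/3)$, $(1/2,0,1/2)$, $(0,1/2,1/2)$. The $\delta$-coordinate on this triangle is at least $1/3$, so $\sigma_{k-\ell+1}\le 2$. (The boundary case $\ell=k$ gives a bound depending only on the fixed initial $V_0\in\Delta'$.) Combining the two steps, $\sigma_k$ is comparable to $m$, whence $\delta_k=D_k/T_k$ is comparable to $1/m$; the identity $\mu_k=\delta_k$ follows from $B_k,C_k\ge D_k$, which holds once $\sigma_k\ge 1$. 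The main technical obstacle I expect is the uniform lower bound on $\sigma_r-\sigma_{r-1}$ in the asymmetric regime for arbitrary $i_r,j_r\ge 1$; this I would handle by combining the explicit recursion with the observation from Lemma~\ref{lem:birkhoff} that $\overline M_{\mathbf 0}$ and $\overline M_{\mathbf 1}$ strictly contract the Hilbert metric on compact subsets of $\Delta$ disjoint from their respective fixed edges, giving the required uniformity in $i_r,j_r$.
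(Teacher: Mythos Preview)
Your approach is essentially the paper's, in slightly different coordinates. The paper writes $V=(\tfrac12-\rho\epsilon,\tfrac12-(1-\rho)\epsilon,\epsilon)$, sets $N=1/\epsilon$, and shows via a second-order expansion in $\epsilon$ that each half-block $\mathbf a^{i_r}$ or $\mathbf b^{j_r}$ increases $N$ by an amount in $(\tfrac12,1)$; your $\sigma=(B+C)/D=(1-\delta)/\delta=N-1$ is the same quantity, and your exact affine recursion on $(\beta^*,\gamma^*)$ recovers the same additive drift without the Taylor approximation (which the paper itself notes makes its constants ``a bit worse''). The common-eigenvector observation $(1,1)\mapsto 2(1,1)$ is exactly what underlies the paper's statement that the third coordinate decays like $\epsilon-A\epsilon^2$ rather than geometrically.

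Two small corrections. First, $\sigma_k\ge1$ gives only $B_k+C_k\ge D_k$, not $\min(B_k,C_k)\ge D_k$; to conclude $\mu_k=\delta_k$ you need $\beta^*_k\ge1$ and $\gamma^*_k\ge1$ separately. This is easily repaired from your own recursion: one step of $M_{\mathbf0}$ gives $\beta^*=(\sigma+1)/2\ge1$ (since $\sigma\ge1$ on the image of $\overline M_{\mathbf2}$) while leaving $\gamma^*$ fixed, and one step of $M_{\mathbf1}$ gives $\gamma^*\ge1$; so both inequalities hold after the first full alternation and persist thereafter. Second, the appeal to Hilbert-metric contraction for the ``asymmetric regime'' is unnecessary. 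The one-step recursion $\delta^*\mapsto(1\pm\delta^*)/2$ already traps $\delta^*$ in a bounded interval, and then the explicit formula
\[\Delta\sigma=(1-\delta^*_0)(1-2^{-i_r})+\bigl(2-(1-\delta^*_0)2^{-i_r}\bigr)(1-2^{-j_r})\]
gives the uniform upper and lower bounds directly for all $i_r,j_r\ge1$.
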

\begin{proof}
  Without loss of generality, we assume
  $\omega=\dots\mathbf0\mathbf1^t\mathbf2\mathbf0^{i_1}\dots\mathbf1^{j_m}$,
  with $i_1+\dots+j_m=\ell$. We have $M_{\mathbf0^n}=\begin{pmatrix}1
    & 2^n-1 & 2^n-1 \\ 0 & 2^n & 0\\ 0 & 0 & 2^n\end{pmatrix}$.
  
  The image of $\overline M_{\mathbf0}$ is the open triangle spanned
  by $(\frac13,\frac13,\frac13)$, $(\frac12,0,\frac12)$ and
  $(\frac12,\frac12,0)$. The image of $\overline
  M_{\mathbf0\mathbf1^t}$ is contained in the open triangle spanned by
  $(\frac14,\frac12,\frac14)$, $(\frac13,\frac13,\frac13)$ and
  $(\frac12,\frac12,0)$. The image of $\overline
  M_{\mathbf0\mathbf1^t\mathbf2}$ is contained in the open triangle
  spanned by $(\frac15,\frac25,\frac25)$, $(\frac27,\frac27,\frac27)$
  and $(\frac13,\frac13,\frac13)$.  It follows that $V_{k-\ell}$
  belongs to that triangle, so $\mu_{k-\ell}\in(\frac15,\frac13)$
  is bounded away from $0$.

  Now, under application of $\mathbf0,\mathbf1$, the third
  \coordinate\ of $V_{k-\ell+n}$ decreases as $n$ increases, while the
  first two approach $1/2$. To understand how exactly, we consider
  $V=(1/2-\rho\epsilon,1/2-(1-\rho)\epsilon,\epsilon)$ for some
  $\epsilon\ll1$ and $\rho\in(\frac12,1)$, and compute
  \begin{equation}\label{eq:M0^n}
    \overline M_{\mathbf0^n}(V)=
    \begin{pmatrix}\frac12-2^{-n}\rho\epsilon+\mathcal O(\epsilon^2)\\
      \frac12-(1-2^{-n}\rho)\epsilon+\mathcal O(\epsilon^2)\\
      \epsilon-2\rho(1-2^{-n})\epsilon^2+\mathcal
      O(\epsilon^3)\end{pmatrix}.
  \end{equation}
  A similar approximation holds for $\overline M_{\mathbf1^n}(V)$,
  with the first two rows switched.

  Write $W=\overline M_{\mathbf0^n}(V)$. It follows that
  $\mu(W)\approx\epsilon-A\epsilon^2$ is bounded away from $0$ as
  $n\to\infty$, with $A\in(\frac12,1)$. On the other hand, if $n\ge1$,
  then $W$ is of the form
  $(1/2-(1-\rho')\epsilon',1/2-\rho'\epsilon',\epsilon')$ for
  $\epsilon'\approx\epsilon-A\epsilon^2$ and some
  $\rho'\in(\frac12,1)$. Set $X=\overline M_{\mathbf1^p}(W)$; then
  $\mu(X)\approx\epsilon'-2\rho'(1-2^{-p})(\epsilon')^2=\epsilon'-B(\epsilon')^2$
  for some $B\in(\frac12,1)$.

  If we now translate to \coordinate s $\epsilon=1/N$, we get
  $\mu(V)=1/N$, $\mu(W)\approx1/(N+A)$ and $\mu(X)\approx1/(N+A+B)$ with
  $A,B\in(\frac12,1)$. We repeat this $m$ times, giving
  $\mu(V_k)\approx1/(N+A_1+B_1+\dots+A_m+B_m)$ with
  $A_i,B_i\in(\frac12,1)$ if $\mu(V_{k-\ell})\approx1/N$. This
  translates to $K=1,L=2$ in the statement of the lemma. In fact, the
  constants are a bit worse because of the approximations we made
  in~\eqref{eq:M0^n}, that are accurate only for small $\mu$.
\end{proof}

\begin{corollary}\label{cor:mu_k bounded}
  If the sequence $\omega$ has the form
  \begin{equation}\label{eq:omega}
    \omega=(\zot)^{i_1}\mathbf2^{j_1}(\zot)^{i_2}\mathbf2^{j_2}(\zot)^{i_3}\mathbf2^{j_3}\dots,
  \end{equation}
  with $i_1,j_1,i_2,j_2,\dots\ge1$, then the $\mu_k$ are all bounded
  away from $0$.
\end{corollary}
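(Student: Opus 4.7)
The plan is to apply Lemma~\ref{lem:mu_k} directly, converting the problem into showing that the integer $m$ produced there is bounded uniformly in $k$. In fact I will argue that $m = 1$ for every $k$, so that the lower bound $\mu_k \geq K/m$ gives $\mu_k \geq K$ uniformly, which is exactly the claim.

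The verification is a short case analysis according to where position $k$ sits inside the decomposition $(\zot)^{i_1}\mathbf 2^{j_1}(\zot)^{i_2}\mathbf 2^{j_2}\cdots$. The key structural observation is that any three consecutive letters inside a block $(\zot)^{i_r}$ form a cyclic rotation of $\mathbf0\mathbf1\mathbf2$, and so use \emph{all} three symbols. Consequently, whenever $k$ lies strictly inside such a block the longest two-symbol suffix $\omega_{k-\ell+1}\cdots\omega_k$ has length at most $2$, forcing $m = 1$ at once.

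The remaining positions to check are those in or immediately next to a $\mathbf 2^{j_r}$ block. I would run through the handful of cases — $k$ inside $\mathbf 2^{j_r}$, or at one of the first three letters of the following $(\zot)^{i_{r+1}}$ — and observe that, reading backwards from $k$, the maximal two-symbol suffix always has the shape of a single run of one letter followed by a single run of another (patterns such as $\mathbf 1\mathbf 2^p$, $\mathbf 2^{j_r+1}\mathbf 0$, $\mathbf 0\mathbf 1$, or $\mathbf 1\mathbf 2$). In each of these the minimal decomposition $\mathbf a^{i_1}\mathbf b^{j_1}\cdots \mathbf a^{i_m}\mathbf b^{j_m}$ evidently has $m = 1$. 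The hypothesis $i_r \geq 1$ is exactly what is needed here: it guarantees that walking backward out of any $\mathbf 2^{j_r}$ block immediately encounters the $\mathbf 1$ (and then the $\mathbf 0$) of the preceding $\zot$, so that the third symbol is introduced before any second alternation between two letters can occur.

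There is no real obstacle; the whole argument is a clean application of Lemma~\ref{lem:mu_k} together with the elementary combinatorics above. The only mild point to be careful about is the treatment of the very first few values of $k$, where the backward window is truncated by the start of $\omega$, but there $\ell$ is bounded and $\mu_k > 0$ trivially since $V_0 \in \Delta'$, so adjusting $K$ by a finite factor handles it.
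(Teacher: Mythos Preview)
Your argument is correct: for sequences of the shape~\eqref{eq:omega}, the maximal two-symbol suffix ending at any position $k$ (once $k\ge 3$) is always of the form $\mathbf a^{p}\mathbf b^{q}$ with a single alternation, so $m=1$ in Lemma~\ref{lem:mu_k} and $\mu_k\ge K$ uniformly. Your case analysis is accurate, and your treatment of the initial positions $k=0,1,2$ via the fixed choice of $V_0\in\Delta'$ is fine.

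The paper, however, does not invoke Lemma~\ref{lem:mu_k} at all in its proof. Instead it argues directly on the simplex: it computes the image of $\overline M_{\zot}$ explicitly (a small triangle near the barycentre), and then checks that applying any power $\overline M_{\mathbf 2^i}$ to that triangle keeps the orbit inside a fixed compact quadrilateral, yielding the explicit bound $\mu_k\in(\tfrac15,\tfrac13)$ for all $k\ge 3$. So the paper's proof is geometric and self-contained, whereas yours is combinatorial and routes through the general estimate of Lemma~\ref{lem:mu_k}. Your route is arguably the more natural one given that the statement is labelled a corollary of that lemma; the paper's route has the advantage of producing concrete numerical constants without appealing to the somewhat delicate asymptotic analysis inside the proof of Lemma~\ref{lem:mu_k}.
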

\begin{proof}
  In fact, the image of
  $\overline M_\zot$ is the open triangle spanned by
  $(\frac13,\frac13,\frac13)$, $(\frac27,\frac27,\frac37)$ and
  $(\frac4{17},\frac6{17},\frac7{17})$, so after each $\zot$
  the $\mu_k$ belongs to $(\frac4{17},\frac13)$.

  The image of that triangle under $\overline M_{\mathbf2^i}$ is
  contained in the convex quadrilateral spanned by
  $(\frac13,\frac13,\frac13)$, $(\frac4{17},\frac6{17},\frac7{17})$,
  $(\frac14,\frac14,\frac12)$ and $(\frac15,\frac3{10},\frac12)$, so
  $\mu_k\in(\frac15,\frac13)$ for all $k$.
\end{proof}

We also show that $\eta$ converges very fast to its limiting values
under periodic orbits:
\begin{lemma}\label{lem:expconv}
  There exist constants $A'\le1$, $B'\ge1$ such that
  \begin{enumerate}
  \item\label{lem:expconv1} For all $V\in\Delta'$ and all $n\in\N$,
    \[\eta(V,(\zot)^n)\ge\eta_+^{3n}A';\]
  \item\label{lem:expconv2} For all $V\in\Delta'$ and all $n\in\N$,
    \[\eta(V,\mathbf2^n)\le2^nB'.\]
  \end{enumerate}
\end{lemma}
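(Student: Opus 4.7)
The plan rests on the observation that $\eta(V,\omega_0\dots\omega_{n-1})$ is exactly the $\ell^1$-norm of $M_{\omega_{n-1}}\cdots M_{\omega_0}(V)$, viewed as a column vector: $\overline M_x$ is the projectivisation of $M_x$, and the successive normalising scalars telescope to give this norm. Thus part~(1) reduces to a lower bound on $|M^n V|$ for the positive matrix $M:=M_{\mathbf2}M_{\mathbf1}M_{\mathbf0}$, while part~(2) reduces to an upper bound on $|M_{\mathbf2}^n V|$.

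Part~(2) I would handle by direct calculation: since $M_{\mathbf2}(\beta,\gamma,\delta)=(2\beta,2\gamma,1)$, we read off $\eta(V,\mathbf2)=3-2\delta$, and if $\delta_k$ denotes the third coordinate of $\overline M_{\mathbf2}^k(V)$, then $\delta_{k+1}=1/(3-2\delta_k)$ attracts to the fixed point $\delta=1/2$. Setting $\epsilon_k:=1/2-\delta_k\in(0,1/2)$, a short manipulation gives $\epsilon_{k+1}=\epsilon_k/(2(1+\epsilon_k))\le\epsilon_k/2$, so $\sum_{k\ge 0}\epsilon_k\le 2\epsilon_0<1$, and
\[\eta(V,\mathbf2^n)=\prod_{k=0}^{n-1}(2+2\epsilon_k)=2^n\prod_{k=0}^{n-1}(1+\epsilon_k)\le 2^n\exp\!\Bigl(\sum_{k\ge 0}\epsilon_k\Bigr)\le 2^n e,\]
so $B'=e$ suffices.

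For part~(1), an explicit multiplication gives $M=\begin{pmatrix}4&4&4\\2&6&6\\3&5&9\end{pmatrix}$ with characteristic polynomial $\lambda^3-19\lambda^2+64\lambda-64$. Newton's identities applied to the roots $r_1,r_2,r_3$ of $X^3-X^2-2X-4$ show that this cubic is precisely $\prod_i(\lambda-r_i^3)$, so the Perron eigenvalue of $M$ equals $\eta_+^3$. Strict positivity of $M$ then provides positive right and left Perron eigenvectors $v_+,w_+$ for $\eta_+^3$, with the remaining spectrum of modulus $\tau<\eta_+^3$. Setting $c(V):=\langle w_+,V\rangle/\langle w_+,v_+\rangle$, one obtains $|M^n V|=c(V)\,\eta_+^{3n}|v_+|+O(\tau^n)$. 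The main obstacle is the uniformity of a lower bound on $c(V)$: since $\Delta'$ is not compact and $V$ may approach the boundary of $\Delta$, mere continuity of $c$ is not enough. The resolution is that $c$ is a strictly positive linear functional of $V$, and every $V\in\overline\Delta$ has non-negative coordinates summing to $1$; hence $c(V)\ge\min_i(w_+)_i/\langle w_+,v_+\rangle>0$ uniformly on the closed simplex. This yields $|M^n V|\ge C\eta_+^{3n}$ for all $n$ past some absolute threshold $n_0$; the finitely many smaller $n$ are controlled by the minimum column sum of $M^n$, which is positive, and choosing $A'$ below all these constants finishes the argument.
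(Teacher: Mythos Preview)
Your argument is correct and takes a genuinely different route from the paper. The key observation---that the telescoping of normalising factors makes $\eta(V,\omega_0\dots\omega_{n-1})$ equal to the $\ell^1$-norm of $M_{\omega_{n-1}}\cdots M_{\omega_0}V$---reduces both parts to pure linear algebra, whereas the paper argues dynamically: it uses that $\overline M_{\zot}$ (respectively $\overline M_{\mathbf2}$) contracts toward the fixed point $V_+$ (respectively the edge $\{\delta=\tfrac12\}$) in the Hilbert metric, combines this with differentiability of $\log\eta$ near the attractor, and sums the resulting geometric series of log-deviations $|\log\eta(V_k,\cdot)-\log\eta_\pm|$. Your treatment of part~(2) via the explicit recursion $\epsilon_{k+1}=\epsilon_k/(2(1+\epsilon_k))$ is cleaner and yields the concrete constant $B'=e$, which the paper's soft argument does not. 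For part~(1), your Perron--Frobenius argument is again more elementary; the crucial uniformity issue you raise (boundedness of $c(V)$ away from zero as $V$ approaches $\partial\Delta$) is correctly resolved by the strict positivity of the left Perron eigenvector on the closed simplex. One small notational wrinkle: the displayed line ``$|M^nV|=c(V)\eta_+^{3n}|v_+|+O(\tau^n)$'' should really be stated as $M^nV=c(V)\eta_+^{3n}v_+ + R_n$ with $\|R_n\|_1=O(\tau^n)$ uniformly in $V\in\overline\Delta$, from which the $\ell^1$-lower bound follows by the reverse triangle inequality; but this is cosmetic, not a gap. The paper's approach has the advantage of transferring verbatim to any periodic word without recomputing matrices, while yours gives sharper constants and avoids the Hilbert metric entirely.
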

\begin{proof}
  Let $V_+\in\Delta$ denote the fixed point of $\overline
  M_\zot$. Note first that $\eta(-,\zot)$ is
  differentiable at $V_+$, and that $\overline M_\zot$ is
  uniformly contracting about $V_+$. Let $\mathcal U$ be a
  neighbourhood of $V_+$ such that $\overline M_\zot$ is
  $\rho$-Lipschitz in $\mathcal U$, and let $D$ be an upper bound for
  the derivative of $\log\eta(-,\zot)$ on $\mathcal U$. We may
  assume, without loss of generality, that $\mathcal U$ contains the
  image of $\overline M_\zot$. Recall that
  $\eta_+^3=\eta(V_+,\zot)$.

  For all $k\in\N$, write $V_k=\overline M_{(\zot)^k}(V)$. For
  $k\ge1$ we have $d(V_k,V_+)\le\rho^{k-1}$, so
  $|\log\eta(V_k,\zot)-3\log\eta_+|<D\rho^{k-1}$, while for
  $k=0$ we write
  $|\log\eta(V,\zot)-3\log\eta_+|<3\log3$. Therefore,
  \[|\log\eta(V,(\zot)^n)-3n\log\eta_+|\le
  3\log3+\sum_{k=1}^{n-1}|\log\eta(V_k,\zot)-3\log\eta_+| \le
  3\log3+D/(1-\rho)
  \]
  is bounded over all $n$ and $V$. The estimate~\eqref{lem:expconv1}
  follows, with $A'=3^3\exp(D/(1-\rho))$.

  For the second part, consider $V_k=\overline M_{\mathbf2^k}(V)$, and
  note that $V_k$ converges to a point $V_\infty$ on the side
  $\{\delta=\frac12\}$ of $\Delta$. By the
  approximations~\eqref{eq:M0^n}, we get
  $d(V_k,V_\infty)\approx2^{-k}$, so definitely
  $d(V_k,V_\infty)<\rho^{k-1}$ for some $\rho<1$. As above,
  $\eta(-,\mathbf2)$ is differentiable in a neighbourhood $\mathcal U$
  of $\{\delta=\frac12\}$, and the derivative of
  $\log\eta(-,\mathbf2)$ is bounded on $\mathcal U$, say by $D$. We
  may again assume, without loss of generality, that $\mathcal U$
  contains the image of $\overline M_{\mathbf2}$. Recall that
  $\eta(V,\mathbf2)=2$ for all $V$ on $\{\delta=\frac12\}$.

  As before, for $k\ge1$ we have
  $|\log\eta(V_k,\mathbf2)-\log2|<D\rho^{k-1}$, while for $k=0$ we
  write $|\log\eta(V,\mathbf2)-\log2|<\log3$. Therefore,
  \[|\log\eta(V,\mathbf2^n)-n\log2|\le
  \log3+\sum_{k=1}^{n-1}|\log\eta(V_k,\mathbf2)-\log\eta_+| \le
  \log3+D/(1-\rho)
  \]
  is bounded over all $n$ and $V$. The estimate~\eqref{lem:expconv1}
  follows, with $B'=3\exp(D/(1-\rho))$.
\end{proof}

We now reformulate the statement of Theorem~\ref{thm:main} as follows.
For the main statement, set $g(R)=\log f(R)$; to construct a
torsion-free group as in Remark~\ref{rem:main}, define $g$ implicitly
by $(R/g(R))^{g(R)}=f(R)$. In both cases, we obtain
\begin{equation}\label{eq:g}
  g(2R)\le 2g(R)\le g(\eta_+R)
\end{equation}
for all $R$ large enough. For simplicity (since growth is only an
asymptotic property) we assume that~\eqref{eq:g} holds for all $R$.
Without loss of generality, we also assume that $g$ is increasing and
satisfies $g(1)=1$.

Recall that an initial metric $V_0\in\Delta'$ has been chosen.  We
will construct a sequence $\omega$ such that, for constants $A,B$,
we have
\begin{equation}\label{eq:AB}
  A\le \frac{g(\eta(V_0,\omega_0\dots\omega_{k-1}))}{2^k}\le B\text{ for
  all }k;
\end{equation}
in fact, it will suffice to obtain this inequality for a set of values
$k_0,k_1,\dots$ of $k$ such that $\sup_i(k_{i+1}-k_i)<\infty$. Indeed,
the orbit $V_i$ of $V_0$ in $\Delta'$ will remain bounded, so we will
have $\mu(V_i)\in[C,1]$ for some $C>0$. Consider for simplicity the
first part of the theorem, for which $g(R)=\log f(R)$. Then, by
Corollary~\ref{cor:global},
\begin{align*}
  E^{2^k}\le v(\eta(V_0,\omega_0\dots\omega_{k-1})\mu_k)\le &v(Bg^{-1}(2^k)),\\
  &v(ACg^{-1}(2^k))\le v(\eta(V_0,\omega_0\dots\omega_{k-1})\mu_k)\le F^{2^k}
\end{align*}
and therefore $v(R)\sim \exp(g(R))=f(R)$. Similar considerations hold
for the torsion-free case of Remark~\ref{rem:main}, using
Lemma~\ref{lem:growthW}.

\begin{proof}[Proof of~\eqref{eq:AB}]
  We will construct a sequence $\omega$ of the form~\eqref{eq:omega},
  \[\omega=(\zot)^{i_1}\mathbf2^{j_1}(\zot)^{i_2}\mathbf2^{j_2}(\zot)^{i_3}\mathbf2^{j_3}\dots,
  \]
  with $i_1,j_1,i_2,j_2,\dots\ge1$. The $\mu_k$ are bounded away from
  $0$ by Corollary~\ref{cor:mu_k bounded}.

  We start by the empty sequence. Then, assuming
  $\omega'=\omega_0\dots\omega_{k-1}$ has been constructed, we repeat the
  following:
  \begin{itemize}
  \item while $g(\eta(V_0,\omega'))<2^k$, we append $\zot$ to
    $\omega'$;
  \item while $g(\eta(V_0,\omega'))>2^k$, we append $\mathbf2$ to $\omega'$.
  \end{itemize}

  For our induction hypothesis, we assume that the stronger condition
  \[\frac122^k\le g(\eta(V_0,\omega'))\le 2^k\]
  holds for each $k$ of the form $i_1+j_1+\dots+i_m+j_m$, and that
  \[2^k\le g(\eta(V_0,\omega'))\le3^32^k\] holds for each $k$ of the form
  $i_1+j_1+\dots+i_m$; these conditions apply whenever $\omega$ is a
  product of `syllables' $(\zot)^{i_t}$ and $\mathbf2^{j_t}$.

  Consider first the case $\frac122^k\le g(\eta(V_0,\omega'))\le2^k$;
  and let $n$ be minimal such that
  $g(\eta(V_0,\omega'(\zot)^n))>2^{k+3n}$. Then, for all
  $i\in\{1,\dots,n\}$, Lemma~\ref{lem:expconv}\eqref{lem:expconv1}
  gives
  $\eta(V_0,\omega'(\zot)^i)\ge\eta(V_0,\omega')\eta_+^{3i}A'$.
  Let $u\in\N$ be minimal such that $A'\ge\eta_+^{-u}$; this, combined
  with $g(\eta_+R)\ge2g(R)$, gives
  \[g(\eta(V_0,\omega'(\zot)^i))\ge g(\eta(V_0,\omega')\eta_+^{3i-u})\ge
  2^{-1-u}2^{k+3i}.
  \]
  By minimality of $n$, we have
  $g(\eta(V_0,\omega'(\zot)^{n-1}))\le2^{k+3(n-1)}$; since $g$
  is sublinear and $\eta\le3$, we get
  \[g(\eta(V_0,\omega'(\zot)^n))\le3^32^{k+3n}.\]

  Consider then the case $2^k\le g(\eta(V_0,\omega'))\le3^32^k$, which
  is similar, and let $n$ be minimal such that
  $g(\eta(V_0,\omega'\mathbf2^n))<2^{k+n}$. Then, for all
  $i\in\{1,\dots,n\}$, Lemma~\ref{lem:expconv}\eqref{lem:expconv2}
  gives $\eta(V_0,\omega'\mathbf2^i)\le\eta(V_0,\omega')2^iB'$; this,
  combined with $g(2R)\le2g(R)$, gives
  \[g(\eta(V_0,\omega'\mathbf2^i))\le3^32^{k+i}B'.
  \]
  By minimality of $n$, we have
  $g(\eta(V_0,\omega'\mathbf2^{n-1}))\ge2^{k+n-1}$; since $g$ is
  increasing, we get
  \[g(\eta(V_0,\omega'\mathbf2^n))\ge\frac122^{k+n}.
  \]
  We have proved the claim~\eqref{eq:AB}, with
  $A=2^{-1-u}$ and $B=3^3B'$.
\end{proof}

\begin{remark}\label{rem:solvablewp}
  The construction of $\omega$ from $f$ is algorithmic, in the
  following sense. If $V_0$ is chosen with rational \coefficient s,
  then all $V_k$ have rational \coefficient s, and therefore are be
  computable. Furthermore, $\eta(V_0,\omega_0\dots\omega_{k-1})$ is
  also computable. Therefore, if $f$ is recursive, then so is
  $\omega$.

  It then follows that $G_\omega$, $\tilde G_\omega$ and
  $W_\omega=A\wr_X G_\omega$ are recursively presented (for
  recursively presented $A$), see~\S\ref{ss:womega}
  and~\S\ref{ss:presentations}.
\end{remark}

\subsection{Illustrations}\label{ss:examples}
Given a sufficiently regular growth function $f$,
Theorem~\ref{thm:main} constructs a sequence $\omega$ such that the
growth of $W_\omega$ is asymptotically $f$. We may proceed the other
way round, and consider `regular' sequences $\omega$, using then
Corollary~\ref{cor:global}, Corollary~\ref{cor:mu_k bounded} and
Lemma~\ref{lem:expconv} to estimate the growth of $W_\omega$. Here are
four examples:
\begin{itemize}
\item Consider the sequence
  $\omega=(\zot)\mathbf2^1(\zot)\mathbf2^2(\zot)\mathbf2^3(\zot)\mathbf2^4\dots$. Among the
  first $k$ entries, approximately $\sqrt k$ instances of $\zot$ will
  have been seen; therefore
  $\eta(V_0,\omega_0\dots\omega_{k-1})\approx 2^{k+\mathcal O(1)\sqrt
    k}$. This gives a growth function of the order of
  \[\exp\big(R/\exp(\mathcal O(1)\sqrt{\log R})\big).\]

\item Consider the sequence
  $\omega=(\zot)\mathbf2^1(\zot)\mathbf2^2(\zot)\mathbf2^4(\zot)\mathbf2^8\dots$. Among the
  first $k$ entries, approximately $\log k$ instances of $\zot$ will
  have been seen; therefore
  $\eta(V_0,\omega_0\dots\omega_{k-1})\approx 2^{k+\mathcal O(1)\log
    k}$. This gives a growth function of the order of
  \[\exp\big(R/(\log R)^{\mathcal O(1)}\big).\]

\item Consider the sequence
  $\omega=(\zot)\mathbf2^{2^1}(\zot)\mathbf2^{2^2}(\zot)\mathbf2^{2^4}(\zot)\mathbf2^{2^8}\dots$.
  Among the first $k$ entries, approximately $\log\log k$ instances of
  $\zot$ will have been seen; therefore
  $\eta(V_0,\omega_0\dots\omega_{k-1})\approx 2^{k+\mathcal
    O(1)\log\log k}$. This gives a growth function of the rough order
  of
  \[\exp\big(R/(\log\log R)^{\mathcal O(1)}\big).\]

\item Consider the Ackermann function
  \[A(m,n)=\begin{cases}
    n+1 & \text{ if }m=0,\\
    A(m-1,1) & \text{ if $m>0$ and }n=0,\\
    A(m-1,A(m,n-1)) & \text{ if $m>0$ and }n>0,
  \end{cases}
  \]
  and consider
  $\omega=(\zot)\mathbf2^{A(0,0)}(\zot)\mathbf2^{A(1,1)}(\zot)\mathbf2^{A(2,2)}\dots$.
  Then $W_\omega$ is a group of subexponential growth, whose growth is
  larger than any primitive recursive function of subexponential
  growth.
\end{itemize}

\subsection*{Thanks}
We express our gratitude to Bill Thurston for explanations and
discussions on the topic of dynamics on the simplex and on the density
of $\eta$-averages as in Theorem~\ref{thm:periodic}, which he
generously contributed during a conference in Roskilde in September
2010.

We are also grateful to Slava Grigorchuk, Pierre de la Harpe, Martin
Kassabov and Igor Pak for their comments on a previous version of this
article, that helped improve its readability.

\begin{bibdiv}
\begin{biblist}
\font\cyreight=wncyr8
\bibselect{math}
\end{biblist}
\end{bibdiv}

\end{document}